\pgfplotsset{compat=newest}
\newcommand*{\dt}[1]{%
  \accentset{\mbox{\large\bfseries .}}{#1}}
\newtheorem{lemma}{Lemma}
\newtheorem{proposition}{Proposition}
\newtheorem{theorem}{Theorem}
\newtheorem{remark}{Remark}
\newtheorem{definition}{Definition}
\newcommand{\T}{T_{\rJ, \alpha, \beta}}
\newcommand{\invT}{T_{\rJ, \alpha, \beta}^{-1}}
\newcommand{\xzero}{\xi}
\newcommand{\J}{\mathbf{J}}
\newcommand{\rJ}{\mathrm{J}}
\newcommand{\dd}{\mathrm{d}}
\renewcommand{\div}{\nabla \cdot}
\newcommand{\avgux}{\langle \mathbf{u}_x \rangle}
\newcommand{\avguxt}{\langle \tilde{\mathbf{u}}_x \rangle}
\definecolor{brightcerulean}{rgb}{0.11, 0.67, 0.84}
\definecolor{brightpink}{rgb}{1.0, 0.0, 0.5}
\definecolor{blue-green}{rgb}{0.0, 0.87, 0.87}
\definecolor{caribbeangreen}{rgb}{0.0, 0.8, 0.6}
\title{A PDE model for unidirectional flows: stationary profiles and asymptotic behaviour}
\author{Annalisa Iuorio\footnote{Radon Institute for Computational and Applied Mathematics, Austrian Academy of Sciences, Vordere Zollamtstr. 3, 1030 Vienna, Austria.} \and Gaspard Jankowiak\footnotemark[1]  \and Peter Szmolyan\footnote{Technische Universit\"at Wien, Institute for Analysis and Scientific Computing, Wiedner Hauptstr. 8-10, 1040 Wien, Austria.} \and Marie-Therese Wolfram\footnote{University of Warwick, Mathematics Institute, CV47AL Coventry UK.}$\ ^{,} $\footnotemark[1]}
\date{}
\begin{document}

\maketitle

\begin{abstract}
In this paper, we investigate the stationary profiles of a convection-diffusion model for unidirectional pedestrian flows in domains with a single entrance and exit. The inflow and outflow conditions at both the entrance and exit as well as the shape of the domain have a strong influence on the structure of stationary profiles, in particular on the formation of boundary layers. We are able to relate the location and shape of these layers to the inflow and outflow conditions as well as the shape of the domain using geometric singular perturbation theory. Furthermore, we confirm and exemplify our analytical results by means of computational experiments.
\end{abstract}

\emph{MSC 2020:
34B16, %Singular nonlinear boundary value problems for ordinary differential equations
34E13, %Multiple scale methods for ordinary differential equations
34E15, %Singular perturbations, general theory for ordinary differential equations
35B25, %Singular perturbations in context of PDEs
35B40, %Asymptotic behavior of solutions to PDEs
65L11, %Numerical solution of singularly perturbed problems involving ordinary differential equations
76A30.} %Traffic and pedestrian flow models

\emph{Keywords: pedestrian dynamics, geometric singular perturbation theory, non linear boundary value problem, stationary states, Burgers' equation, dimension reduction.}

\section{Introduction}

\noindent In this paper, we consider a continuum model for unidirectional pedestrian flows in domains with a single exit and entrance. The geometry of the domain as well as the inflow and outflow rates lead to the formation of different boundary layers. We analyse the respective stationary pedestrian profiles using {Geometric Singular Perturbation Theory} (GSPT) to understand the influence of those factors on the formation and location of layers. This gives interesting insights on how for example bottlenecks due to construction sites can result in congestion.

\noindent Pedestrian simulations are nowadays standard practice to ensure the safety of public buildings and events. The respective software packages are based on a tremendous amount of research in model development and simulation tools in the last decades. Mathematical models for crowds can be divided into microscopic and macroscopic approaches. In the microscopic framework, people are treated as individual entities. The two most popular approaches are force-based and cellular automata models. In the former, the dynamics of each individual is described by  a nonlinear ordinary differential equation (ODE) leading to a high-dimensional system of coupled ODEs. The social force model is one of the most prominent in this context \cite{helbing1995social}. In cellular automata, people move on a discrete lattice with interaction dependent transition rates. Microscopic models form the basis of most pedestrian simulation software in the engineering and transportation research community, see for example \cite{BHRW2016, BMP2011}. In contrast to microscopic models, macroscopic models describe the entire crowd as an entity. The dynamics of the crowd is governed by nonlinear conservation laws, in which the average velocity of the crowd usually depends on a desired direction (for example to reach an exit or target), interactions with others, and stochastic fluctuations. In general, macroscopic models are mathematically more amendable and allow to analyse how certain factors, such as the inflow and outflow rates or the geometry, influence the stationary profiles. Stationary profiles are of particular interest in the context of pedestrian dynamics, as they allow to identify regions of high densities and give insights into rooms capacities. For an overview on the mathematical modelling of pedestrian crowds we refer to \cite{CPT2014}. More recently, the connection between microscopic data and the respective mean-field models has been investigated more thoroughly. For example Gomes et al.~\cite{GSW2019} used the Bayesian framework to identify the maximum velocity in the same model that we are investigating here, using individual trajectories.\\
The model considered in this paper is a parabolic convection-diffusion equation, which was proposed by Burger and Pietschmann in \cite{burger_flow_2016}. The authors start with a cellular automata model describing a large pedestrian crowd, which enters a domain at a given rate through an entrance and exits at a possibly different rate through the exit, and then formally derive the respective mean-field model. The derived equation corresponds to a 2D viscous Burgers' type equation with nonlinear inflow and outflow boundary conditions. The Burgers' equation is well known and has been thoroughly investigated, especially in the context of traffic flow. It can be obtained by scaling the Lighthill-Whitham-Richards (LWR) model, see \cite{lighthill1955kinematic, richards1956shock}. Its analysis is well understood on the real line; however, there are fewer results in higher space dimension or for more complicated boundary conditions. Burger and Pietschmann have characterised the stationary profiles for straight corridors, showing the formation of boundary layers at the entrance and/or the exit depending on the inflow and outflow rate. We generalise their results by deriving and analysing a 1D reduction for axially symmetric 2D domains. Furthermore, we investigate the stationary profiles in the vanishing diffusion limit using GSPT, which allows us to analyse the structure of stationary regimes for more general domains. This gives interesting insights on how boundary conditions and structural features lead to the formation of high and low density regimes inside the corridor as well as at entrances and exits. Our results allow us to identify regions in the parameter space of the boundary conditions in which solutions have the same structure - that is the same type and location of boundary layers - depending on the inflow and outflow conditions. The shape of these regions depends on the geometry of the domain; transitions between them can lead to very different profiles. Our analysis allows us to identify conditions under which small changes to the inflow rate, the outflow rate or the geometry may lead to very different stationary states.  \\

\noindent 
GSPT is a dynamical systems approach to singularly perturbed ordinary differential equations
started by the pioneering work of Fenichel \cite{Fenichel_1979}. The most common form of GSPT
considers slow-fast systems of the form
\begin{equation} \label{eq:slow}
 \begin{aligned}
  \dt{u} &= f(u,v), \\
  \varepsilon \dt{v} &= g(u,v),
 \end{aligned}
\end{equation}
where $u$ and $v$ are functions of $t$ and $0 < \varepsilon \ll 1$.
Often $t$ has the interpretation of time but it may represent equally well a spatial variable.
For $f =O(1)$ and $g =O(1)$ the variable $u$ varies on the slow time-scale $t$ and the variable $v$ on the fast time-scale
$\tau := \frac{t}{\varepsilon}$, which explains the name slow-fast system.
Written on the fast time-scale the equation has the form 
\begin{equation} \label{eq:fast}
 \begin{aligned}
  u' &= \varepsilon f(u,v), \\
  v' &= g(u,v).
 \end{aligned}
\end{equation}
Under suitable assumptions solutions of system (\ref{eq:slow}) 
for small values of $\varepsilon$ can be constructed as perturbation of concatenations of solutions of the two limiting problems obtained by setting
$\varepsilon =0$ in systems (\ref{eq:slow}) and (\ref{eq:fast}), which are referred to as the
reduced problem and the layer problem, respectively.
In GSPT these constructions are carried out in the framework of dynamical systems theory;
with the theory of invariant manifolds playing a particularly important role.
In the specific problem analysed in this paper well established results and methods
from GSPT are used. Therefore, and also because of lack of space, we do not give a more detailed
summary of GSPT, but refer to  \cite{Jones_1995, Kuehn_2015} 
for more background on GSPT and its many applications.
The necessary concepts and results are explained in Section 4 as needed in the context of the specific problem at hand.
GSPT has been used extensively to construct global solutions with interesting dynamics, e.g. 
relaxation oscillations, see e.g. \cite{Szmolyan_2004}
and heteroclinic or homoclinic orbits, see e.g. 
\cite{Jones_1995}. 
Applications to boundary value problems on finite intervals are less common,  but see e.g. \cite{Hayes_1998} for a basic example and \cite{Iuorio_MEMS}. 
In many of its applications GSPT allows a full description of bifurcation diagrams up to 
their singular limit $\varepsilon =0$, where numerical tools have difficulties or fail.

\medskip
\noindent In this paper, we propose and analyse a mean-field model for unidirectional pedestrian flows using PDE techniques as well as GSPT. Its main contributions can be summarised as follows:
\begin{itemize}[itemsep=2pt, topsep=2pt]
    \item Proposal of a 1D area averaged model for unidirectional pedestrian flows in axially symmetric domains, accessible to mathematical analysis.
    \item For a restricted set of geometries: characterisation of stationary pedestrian profiles depending on the inflow and outflow rate as well as the geometry. Analysis of the limiting profiles using GSPT.
    \item Validation and computation of stationary profiles using numerical experiments for various geometries.
\end{itemize}

\noindent This paper is organised as follows: we introduce the mathematical model and perform a reduction to one dimensional by averaging of the cross-sectional area in Section \ref{sec:model}. Then, we discuss existence and uniqueness of stationary solutions
of this area averaged model in Section \ref{sec:analysis}. Section \ref{sec:vanvis} focuses on the vanishing viscosity limit of the area averaged model, which allows us to characterise
the stationary profiles when the width is monotonic in the direction of movement. We complement our results with computational experiments in Section \ref{sec:numerics}, and give an outlook on future research directions in Section \ref{sec:conclusion}.

\section{A PDE model for unidirectional flows in corridors}\label{sec:model}

We start by introducing the PDE model for unidirectional pedestrian flows proposed by Burger and Pietschmann in \cite{burger_flow_2016} before deriving the 1D area averaged model investigated in this paper.\\
\noindent We consider a large pedestrian crowd with density $\rho = \rho(x,t)$, evolving in a bounded domain $\Omega \subset \mathbb{R}^n$. Through parts of the boundary $\partial\Omega$, denoted by $\Gamma$ and $\Sigma$ respectively, the crowd can enter and leave the domain.
We assume that $0 \le \rho \le \rho_\mathrm{max} \in \mathbb{R}$ and that the dynamics of the crowd is driven by transport and diffusion.
It is assumed that the transport velocity depends only on the local density. This relationship was first proposed by Greenshields \cite{greenshields_photographic_1934} in the context of vehicular traffic and was coined \emph{fundamental diagram} by Haight in 1963 \cite{haight_mathematical_1963}. It is now a classical working hypothesis in both traffic and crowd motion modelling. The precise relation depends on experimental conditions and is still an active research topic.
Following experimental data, \emph{e.g.} \cite{SSKLB2007}, we will assume that at low densities, individuals can walk with a maximum velocity $v_\mathrm{max}$. However, their speed decreases as the density increases and approaches zero at a certain density (denoted by $\rho_\mathrm{max}$) due to overcrowding. The form of the fundamental diagram suggests a linear decrease of velocity as a function of density; hence we set
$$v = v_{\max} \left(1-\frac{\rho}{\rho_\mathrm{max}}\right).$$
Different values for $\rho_\mathrm{max}$ and $v_\mathrm{max}$ can be found in the literature, ranging from $3.8$ to $10$ pedestrians per square meter for $\rho_\mathrm{max}$ and $0.98$ m/s to $1.5$ m/s for $v_\mathrm{max}$, see for example \cite{SSKLB2007}.
We set without loss of generality  $v_\mathrm{max} = \rho_\mathrm{max} = 1$ in the following.
Moreover, we assume that all individuals share the common objective of moving from the entrance to the exit.
This is modelled by the choice of the normalized vector field $\mathbf{u} : \mathbb{R}^n \mapsto \mathbb{R}^n$ which defines the direction of transport, see \eqref{eq:general flux}.
For example, $\mathbf{u}(x)$ can be chosen to be the unit tangent vector to the geodesic from an interior point $x$ to the exit.
%Small fluctuations along the desired path are included by diffusion.
Let $\J$ denote the total flux. The previous considerations can be formalized as the following convection-diffusion system:
\begin{subequations}
\label{eq:original_system}
\begin{gather}
    \partial_t \rho + \nabla \cdot \J= 0\,,
    \label{eq:general continuity}
    \\
    \J = -\varepsilon \nabla \rho + \rho \left(1-\rho\right) \mathbf{u}\,,
    \label{eq:general flux}
\end{gather}
where $\varepsilon$ denotes the diffusion coefficient. \eqref{eq:original_system} can also be derived rigorously as the continuous limit of an exclusion process on lattices \cite{giacomin_phase_1997}.
Individuals are assumed to exit the corridor at $\Sigma$ with a given rate $\beta \in (0,1)$.
At $\Gamma$, they enter with rate $\alpha \in (0,1)$ but are also subject to volume exclusion, which makes entering less likely as $\rho$ approaches $1$. This results in an effective inflow rate $\alpha\left(1-\rho\right)$ and gives the following boundary conditions:
\begin{alignat}{2}
    \label{eq:no flux J 2D}
    \J \cdot \mathbf{n} &= 0 &\quad\text{ on } &\partial\Omega \setminus (\Gamma \cup \Sigma)\,,
    \\
    -\J \cdot \mathbf{n} &= \alpha (1-\rho) &\quad \text{ on } &\Gamma\, \label{eq:2d_influx},
    \\
    \J \cdot \mathbf{n} &= \beta \rho &\quad \text{ on }& \Sigma\,.
\end{alignat}
\end{subequations}
Existence of stationary solutions to \eqref{eq:original_system} was studied in \cite{burger_flow_2016}, for smooth $\Omega \subset \mathbb{R}^n$, $n \in \{1,2,3\}$.
Because of the non-standard boundary conditions, classical results from the literature cannot be applied. The authors present two different existence proofs: the first one for a divergence free vector field $\mathbf{u}$, while the second one assumes that $\mathbf{u} = \nabla V$, where $V$ is a given potential. In the latter case, the equation can be interpreted as a
Wasserstein gradient flow.
Burger and Pietschmann have also characterised
the stationary profiles of \eqref{eq:original_system} in one spatial dimension. If  $\Omega = [0, L]$, they have proved uniqueness of the solution for $u \equiv 1$
and have shown that there are three distinct regimes:
\begin{itemize}
    \item \emph{influx-limited}, where the overall density is low and higher towards the exit $\Sigma$.
    \item \emph{outflux-limited}, where the overall density is high and lower towards the entrance $\Gamma$.
    \item \emph{maximal flux}, where $\rho$ is close to $\frac{1}{2}$ with boundary layers at the entrance and exit.
\end{itemize}
The occurrence of these regimes depends on the values of the inflow and outflow rates $\alpha$ and $\beta$.
These regimes are characterised precisely in \cite{burger_flow_2016}, the first two for $\alpha,\beta < \frac{1}{2}$, the third for $\alpha,\beta > \frac{1}{2}$.

\subsection{The area averaged 1D model}
Next we propose a possible reduction of \eqref{eq:original_system}, which can be used to study analytically how the geometry and the boundary conditions influence the stationary profiles. To this aim we reduce the problem to one spatial dimension by averaging the flow along the cross section. A similar approach was used for example in \cite{burger_nonlinear_2012} in the context of ions flowing through radially symmetric nanopores.\\
For this purpose, we restrict our attention to corridor-shaped domains $\Omega \in \mathbb{R}^2$ of the form
\begin{align}\label{e:omega}
    \Omega := \left\{(x,y) \ : \ x \in [0, L],\, y \in \frac{1}{2}[-w(x), w(x)]\right\}\,,\tag{Assumption 1}
\end{align}
where $w:[0,L] \mapsto (0,+\infty)$ is the width in the $y$ direction, see Figure~\ref{fig:geometry}. In the following analysis, $w$ is assumed to be smooth. By definition, the considered domain $\Omega$ is symmetric w.r.t. the $x$-axis. From now on we consider smooth vector fields $\mathbf{u}$ which are symmetric w.r.t.~the $x$-axis as well, i.e.~satisfying
\begin{equation}
    \begin{gathered}
        \mathbf{u}_x(x,-y) = \mathbf{u}_x(x, y) \text{ and } \mathbf{u}_y(x,-y)=-\mathbf{u}_y(x,y)\,,
    \\
    |\mathbf{u}| = 1\,,
    \\
    \mathbf{u} \cdot n = \begin{cases} -1 & \text{on } \Gamma, \\
    1 & \text{on } \Sigma.\end{cases}
    \end{gathered}
    \label{ass:u}
    \tag{Assumption 2}
\end{equation}

The last condition in~\ref{ass:u} is not crucial here, but we introduce it to simplify the derivation of the model, see Remark~\ref{rem:boundary u}. In particular, the choice of $\mathbf{u}$ in our numerical experiments does not meet this last assumption. 

\begin{figure}[ht!]
    \centering
    \vspace{.4cm}
    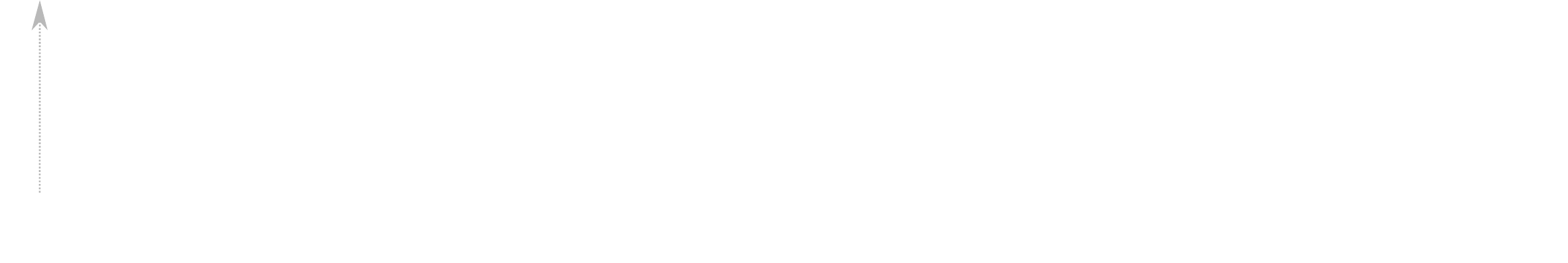
    \caption{Sketch of a typical domain $\Omega$, the entrance boundary $\Gamma$ and the exit boundary $\Sigma$. The width at a given point $x$ is $w(x)$ and the outward normal vector is $\mathbf{n}$.}
    \label{fig:geometry}
\end{figure}

Under Assumptions 1 and 2, the uniqueness of the steady state (see \cite{burger_flow_2016}) ensures that both $\rho$ and $\J$ are symmetric w.r.t.~the $x$-axis, i.e.~for all $(x,y) \in \Omega$:
\begin{equation*}
    \rho(x, -y) = \rho(x, y)\,,
    \quad \text{and} \quad
    \J(x, -y) =
    \begin{pmatrix} \J_x(x, -y) \\ \J_y(x, -y) \end{pmatrix}
    =
    \begin{pmatrix} \J_x(x, y) \\ -\J_y(x, y) \end{pmatrix}
    \,.
\end{equation*}
By integrating \eqref{eq:general continuity} with respect to $y$ we get
\begin{equation}
    \frac{\partial}{\partial t} \int_{-w(x)/2}^{w(x)/2} \rho \; \dd y + \int_{-w(x)/2}^{w(x)/2} \nabla \cdot \J \; \dd y = 0\,.
    \label{eq:general continuity integrated}
\end{equation}
Using \eqref{eq:no flux J 2D} on the side walls, we have that
\begin{align*}
    \int_{-\frac{w}{2}}^{\frac{w}{2}} \div \J(x,y) \; \dd y  &=
    \int_{-\frac{w}{2}}^{\frac{w}{2}} \partial_x \J_x(x,y) + \partial_y \J_y(x,y) \; \dd y
    \\
    &=
    \frac{\partial}{\partial x}\int_{-\frac{w}{2}}^{\frac{w}{2}} \J_x(x,y) \; \dd y - \frac{\partial_x w(x)}{2} \left(\J_x\left(x, \frac{w(x)}{2}\right) + \J_x\left(x, -\frac{w(x)}{2}\right)\right)
    \\
    &\quad+ \J_y\left(x, \frac{w(x)}{2}\right) - \J_y\left(x, -\frac{w(x)}{2}\right)
    \\
    &=
    \frac{\partial}{\partial x}\int_{-\frac{w}{2}}^{\frac{w}{2}} \J_x(x,y) \; \dd y
    + \begin{pmatrix}
        - \partial_x w(x)/2 \\ 1
    \end{pmatrix}
    \cdot \J \left(x, \frac{w(x)}{2}\right)
    \\
    &\;\;\quad\quad\quad\quad\quad\quad\quad\quad\quad- \begin{pmatrix}
        \partial_x w(x)/2 \\ 1
    \end{pmatrix}
    \cdot \J \left(x, \frac{-w(x)}{2}\right)
    \\
    &=
    \frac{\partial}{\partial x} \int_{-\frac{w}{2}}^{\frac{w}{2}} \J_x(x,y) \; \dd y
    =
    \nabla \cdot \int_{-\frac{w}{2}}^{\frac{w}{2}} \J(x,y) \; \dd y\,.
\end{align*}
We see that \eqref{eq:general continuity integrated} has in fact divergence form, that is
\begin{equation}
    \frac{\partial}{\partial t} \int_{-\frac{w}{2}}^{\frac{w}{2}} \rho \; \dd y + \nabla \cdot \int_{-\frac{w}{2}}^{\frac{w}{2}} \J \; \dd y = 0\,.
    \label{eq:general continuity integrated divergence}
\end{equation}

From now on, we make the following key approximation and neglect the variation of $\rho$ in the transversal direction $y$:
\begin{equation}
    \rho(x, y) = \rho(x)\,.
    \tag{Approximation 1}
\end{equation}
This can be justified in the limit of narrow corridors, see~\cite{burger_nonlinear_2012}.
Plugging Approximation 1 into~\eqref{eq:general continuity integrated divergence} we obtain:
\begin{equation}
    w(x) \partial_t \rho(x) + \partial_x \left(w(x) \left(-\varepsilon \partial_x \rho(x) + \rho(x)\left(1-\rho(x)\right)\avgux(x)\right)\right) = 0\,,
    \label{eq:stationary states rho}
\end{equation}
where $\langle \cdot \rangle = w^{-1}(x) \int \cdot \; \dd y$.
The corresponding boundary conditions are
 \begin{equation}
\begin{aligned}
    \varepsilon\, \partial_x \rho &= (\rho - \alpha) (1-\rho)\,,
 & \text{at } x &= 0\,,
 \\
    \varepsilon\, \partial_x \rho &= \rho ((1-\rho) - \beta)\,,
 & \text{at } x &= L\,.
\end{aligned}
\label{eq:boundary conditions rho}
\end{equation}
At this point $\langle \mathbf{u}_x\rangle$ can be scaled out by the change of variables $x \rightarrow \tilde{x} = \int_0^x \avgux (s) \dd s \in [0, \tilde{L}]$, where $\tilde{L} = \int_0^L \avgux(s) \dd s$.
The equation for the transformed variable $\tilde{\rho}(\tilde{x}) = \rho(x)$ reads as:
\begin{equation} \label{eq:aveq}
    \partial_t \tilde{\rho} + \partial_{\tilde{x}} \left( \tilde{w} \avguxt \left(-\varepsilon \partial_{\tilde{x}} \tilde{\rho} + \tilde{\rho}\left(1- \tilde{\rho}\right)\right)\right) = 0\,.
\end{equation}
We define $\tilde{k} = \tilde{w} \avguxt$, which then depends on the parameters $w$ and $\mathbf{u}$ of the original problem.
\begin{remark}[Boundary conditions for $\mathbf{u}$]
    \label{rem:boundary u}
    Under~\ref{ass:u}, the boundary conditions~\eqref{eq:boundary conditions rho} are unchanged by the change of variable, since $\avgux = 1$ for $x = 0,L$.

    One can also consider more general choices for $\mathbf{u}$, by introducing~$\tilde\alpha = \avgux(0)^{-1} \alpha$. Since $|u| = 1$, we have $\avgux(0)^{-1} > 1$ and $\alpha < \tilde\alpha$. By restricting $\tilde\alpha$ to the range $(0,1)$, the analysis of the next section still holds. A similar reasoning applies to $\beta$.
\end{remark}
After dropping the tilde notation, Equation \eqref{eq:aveq} then reads
\begin{equation*}
    \partial_t \rho + \partial_{{x}} \left( k(x) \left(-\varepsilon \partial_{{x}} {\rho} + {\rho}\left(1- {\rho}\right)\right)\right) = 0\,.
\end{equation*}
For stationary unidirectional pedestrian flows, we drop the time dependency and obtain the following
1D area averaged model
\begin{equation}
    \partial_x \rJ = \partial_x(k(x) j(x)) = 0\,,
    \label{eq:reduced equation rho}
\end{equation}
where the flux $j$ is defined by $j(x) = -\varepsilon \partial_x \rho + \rho \left(1-\rho\right)$,
and is subject to the boundary conditions:
\begin{equation}
\begin{aligned}
    j &= \alpha \left(1 - \rho\right)
 & \text{ at } x &= 0\,,
 \\
    j &= \beta \rho
 & \text{ at } x &= L\,.
\end{aligned}
\label{eq:boundary conditions reduced rho}
\end{equation}
We will focus on this system in the rest of the paper.

\section{Analysis of the 1D area averaged model}\label{sec:analysis}

In this section, we discuss existence and uniqueness of solutions for the area averaged model \eqref{eq:reduced equation rho} with boundary conditions
\eqref{eq:boundary conditions reduced rho}, as well as some of their properties, such as their symmetry and the validity of a maximum principle.

\subsection{Existence of solutions}

We start by defining weak solutions for our problem.

\begin{definition}[Weak solution]
    \label{def:weak solution}
    In the following, we say that $\rho \in H^1(0, L)$ is a \emph{weak solution} of the boundary value problem~\eqref{eq:reduced equation rho}-\eqref{eq:boundary conditions reduced rho} if
    \begin{equation*}
        \int_0^L k(x) \left( \varepsilon \partial_x \rho + \rho\left(1-\rho\right)\right) \partial_x \phi \;\dd x - \alpha k(0) (1-\rho(0)) \phi(0) + \beta k(L) \rho(L) \phi(L) = 0\,
    \end{equation*}
    for all $\phi\in H^1(0,L)$.
\end{definition}

We will adapt the arguments presented in \cite{burger_flow_2016} for the potential case to show existence and uniqueness of solutions to \eqref{eq:reduced equation rho}.
We start by defining the entropy
\begin{equation}
    \label{eq:entropy}
    E[\rho] = \int_0^{L} \left(\rho \log \rho + (1-\rho)\log(1-\rho) + \frac{x}{\varepsilon} \rho \right)\; \dd x\,,
\end{equation}
and rewrite \eqref{eq:reduced equation rho} as
\begin{equation}
    \label{eq:stationary states gradient flow formulation}
    \partial_x \cdot \left(\varepsilon k(x) \rho(1-\rho) \partial_x \frac{\delta E[\rho]}{\delta \rho} \right) = 0\,
\end{equation}
where $\frac{\delta}{\delta \rho}$ denotes the variational derivative.

\begin{theorem}[Existence]
    \label{thm:existence}
    Let $k \in L^\infty([0, L])$ be bounded away from zero and continuous at $0$ and $L$. Then, for any $0<\alpha<1$, $0<\beta<1$, there exists
    at least one weak solution $\rho\in H^1([0,L])$ to Equation \eqref{eq:reduced equation rho}
    satisfying the boundary conditions \eqref{eq:boundary conditions reduced rho}. Moreover, $0 < \rho < 1$.
\end{theorem}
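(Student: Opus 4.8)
The plan is to reduce the problem to a linear elliptic one by freezing the convective nonlinearity, to solve the resulting family of problems with the Lax--Milgram lemma, to obtain a weak solution of a \emph{truncated} version of~\eqref{eq:reduced equation rho}--\eqref{eq:boundary conditions reduced rho} via Schauder's fixed-point theorem, and finally to recover the bounds $0<\rho<1$ a posteriori, so that the truncation becomes inactive. Since no a priori $L^\infty$ bound is available at the start, the first step is to replace the flux nonlinearity $\rho(1-\rho)$ by the globally Lipschitz, bounded function $\Phi(s):=T(s)\bigl(1-T(s)\bigr)$, where $T(s):=\min\{\max\{s,0\},1\}$; thus $0\le\Phi\le\tfrac14$ and $\Phi(s)=s(1-s)$ on $[0,1]$. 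The boundary terms in~\eqref{eq:boundary conditions reduced rho} are already affine in $\rho$ and are left unchanged.

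\noindent For $\bar\rho\in C([0,L])$, define $\rho=\mathcal{T}(\bar\rho)\in H^1([0,L])$ as the solution of the weak problem of Definition~\ref{def:weak solution}, with $\Phi$ in place of $\rho(1-\rho)$ and the convective term evaluated at $\bar\rho$:
\[
  \varepsilon\!\int_0^L\! k\,\partial_x\rho\,\partial_x\phi\;\dd x
  + \alpha\,k(0)\,\rho(0)\phi(0) + \beta\,k(L)\,\rho(L)\phi(L)
  = \int_0^L\! k\,\Phi(\bar\rho)\,\partial_x\phi\;\dd x + \alpha\,k(0)\,\phi(0)
\]
for all $\phi\in H^1([0,L])$; here the constant part of the inflow datum is moved to the right-hand side, while its linear part $-\alpha\rho(0)$, together with the outflow term $\beta\rho(L)$, is kept in the bilinear form, where it contributes a favourable sign. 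Because $k$ is bounded away from zero and continuous (hence positive) at $0$ and $L$, the bilinear form on the left is bounded and, by the one-dimensional estimate $\|\rho\|_{L^2}^2\lesssim\|\partial_x\rho\|_{L^2}^2+|\rho(0)|^2$, coercive on $H^1([0,L])$; the right-hand side is a bounded linear functional whose norm is controlled by $\|\Phi\|_\infty$ alone, hence uniformly in $\bar\rho$. Lax--Milgram thus defines $\mathcal{T}$ and yields a bound $\|\mathcal{T}(\bar\rho)\|_{H^1}\le R$ independent of $\bar\rho$. Since $H^1([0,L])$ embeds compactly into $C([0,L])$ and $\mathcal{T}$ is continuous there (if $\bar\rho_n\to\bar\rho$ uniformly then $\Phi(\bar\rho_n)\to\Phi(\bar\rho)$ in $L^2$, so the data converge in $(H^1)^*$ and $\mathcal{T}(\bar\rho_n)\to\mathcal{T}(\bar\rho)$ in $H^1$), Schauder's fixed-point theorem, applied on the ball $\{\|\cdot\|_\infty\le R'\}$ with $R'$ large enough, provides a fixed point $\rho=\mathcal{T}(\rho)$: a weak solution of the truncated problem.

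\noindent To remove the truncation we test the truncated weak equation with $\phi=(\rho-1)^+\in H^1([0,L])$. On $\{\rho>1\}$ one has $\Phi(\rho)=0$, and a direct check shows that all boundary contributions have the correct sign, so that $\varepsilon\int_{\{\rho>1\}}k\,|\partial_x\rho|^2\le0$; as $\rho$ is continuous this forces $(\rho-1)^+\equiv0$. Choosing $\phi=(-\rho)^+$ gives $\rho\ge0$ by the same argument. Hence $\Phi(\rho)=\rho(1-\rho)$ and $\rho$ solves~\eqref{eq:reduced equation rho}--\eqref{eq:boundary conditions reduced rho}. Writing $\rJ:=k(x)\,j(x)$, which is constant by~\eqref{eq:reduced equation rho} so that $\varepsilon\,\partial_x\rho=\rho(1-\rho)-\rJ/k$ holds a.e., the strict bounds $0<\rho<1$ follow by comparing, at a putative interior or boundary point where $\rho\in\{0,1\}$, the sign of the corresponding extremum of $\rho$ with the sign of $j$ dictated by this relation and by~\eqref{eq:boundary conditions reduced rho}.

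\noindent The two delicate points are the sign bookkeeping in the linearisation --- the reservoir-type boundary conditions must be split so that they enter the bilinear form coercively while leaving only a bounded datum on the right --- and the a posteriori derivation of $0<\rho<1$ needed to discard the truncation, which has to be carried out at the level of the weak (respectively first-order ODE) formulation, because the mere $L^\infty$ regularity of $k$ rules out a classical strong maximum principle. An alternative route to the pointwise bounds, closer to \cite{burger_flow_2016}, uses the entropy structure~\eqref{eq:entropy}--\eqref{eq:stationary states gradient flow formulation}: passing to the entropy variable, i.e. parametrising $\rho=\bigl(1+e^{-(\delta E[\rho]/\delta\rho-x/\varepsilon)}\bigr)^{-1}$, automatically confines $\rho$ to $(0,1)$, at the price of a quasilinear (rather than semilinear) frozen problem.
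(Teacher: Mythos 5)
Your proof is correct, but it takes a genuinely different route from the paper's. The paper, following \cite{burger_flow_2016}, passes to the entropy variable $\psi=\log\rho-\log(1-\rho)+x$, rewrites \eqref{eq:reduced equation rho} as $\partial_x\bigl(k\,A(\psi)\,\partial_x\psi\bigr)=0$, regularises $A_\delta=A+\delta$ with an additional $\delta\psi$ term, solves the frozen problems as minimisers of a convex, coercive functional, and then sends $\delta\to 0$; the bounds $0<\rho<1$ come for free because $\rho$ is a logistic function of $\psi$. You stay in the physical variable: truncate $\rho(1-\rho)$, solve the frozen problems by Lax--Milgram (your sign bookkeeping is right: the terms $\alpha k(0)\rho(0)\phi(0)$ and $\beta k(L)\rho(L)\phi(L)$ enter the form with favourable sign, and $k\ge k_{\min}>0$ with $k(0),k(L)$ defined by the assumed continuity gives $H^1$-coercivity via $\|\rho\|_{L^2}^2\lesssim\|\partial_x\rho\|_{L^2}^2+|\rho(0)|^2$), apply Schauder through the compact embedding $H^1(0,L)\subset C([0,L])$, and recover $0\le\rho\le1$ by testing with $(\rho-1)^+$ and $(-\rho)^+$; note that what annihilates the constant value of $(\rho-1)^+$ is the vanishing of $\alpha k(0)\bigl[(\rho(0)-1)^+\bigr]^2$ in the resulting identity, not continuity as such. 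Your approach is more elementary and self-contained --- no change of unknown and no $\delta\to0$ limit, which the paper delegates to \cite{burger_flow_2016} --- at the price of an extra truncation/maximum-principle layer; the paper's entropy route encodes the box constraint structurally and dovetails with the entropy functional \eqref{eq:entropy} and the uniqueness argument, as your closing remark correctly observes. The only step you leave sketchy, the strictness $0<\rho<1$, does close and should be spelled out: from the weak equation $\rJ:=kj$ is constant with $\rJ=\alpha k(0)(1-\rho(0))=\beta k(L)\rho(L)\ge 0$; if $\rJ=0$ then $\rho(0)=1$, $\rho(L)=0$ while $\varepsilon\partial_x\rho=\rho(1-\rho)\ge0$ a.e.\ makes $\rho$ nondecreasing, a contradiction; and for $\rJ>0$, near any point where $\rho$ touches $0$ or $1$ one has $\varepsilon\partial_x\rho\le\min(\rho,1-\rho)-\rJ/\|k\|_\infty<0$ a.e., so $\rho$ is strictly decreasing there, which is incompatible with $0\le\rho\le1$ at an interior extremum or with the endpoint relations $\rJ=\alpha k(0)(1-\rho(0))$, $\rJ=\beta k(L)\rho(L)$ --- exactly the a.e.\ ODE-level argument that your caveat about $k\in L^\infty$ anticipates.
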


\begin{proof}
The proof follows the lines of \cite{burger_flow_2016}. Hence we sketch its main steps only.\\
Without loss of generality, we can set $\varepsilon = 1$ by scaling and
define the entropy variable
\begin{equation*}
    \psi := \frac{\delta E[\rho]}{\delta \rho}= \log \rho - \log (1-\rho) + x\,.
\end{equation*}
Equation \eqref{eq:reduced equation rho} then reads as
\begin{equation}
    \partial_x \left(k(x) A(\psi) \partial_x \psi \right) = 0\,,
    \label{eq:stationary entropy formulation}
\end{equation}
where $A(\psi) = \frac{e^{\psi - x}}{\left(1 + e^{\psi - x}\right)^2}$, with boundary conditions
\begin{equation*}
    - A(\psi) \partial_x \psi =
    \begin{cases}
        \alpha \frac{1}{1 + e^{\psi - x}} & \text{ at } x = 0\,,
        \\
        \beta \frac{e^{\psi - x}}{1 + e^{\psi - x}} & \text{ at } x = L\,.
    \end{cases}
\end{equation*}
Following \cite{burger_flow_2016}, we introduce the modified operator $A_\delta := A + \delta$, and consider the regularised equation
\begin{equation*}
    \partial_x \left(k(x)\, A_\delta(\psi_\delta) \nabla \psi_\delta\right) + \delta \psi_\delta= 0\,,
\end{equation*}
for which we can use a fixed-point argument to show existence of weak solutions.
To this aim, we introduce $\tilde A_\delta(x) := A_\delta(\tilde\psi(x))$ for any $\tilde\psi \in L^2$  and
we consider the linearised problem
\begin{equation}
    \label{eq:existence regularized equation}
    \partial_x \left(k(x)\, \tilde{A}_\delta \nabla \tilde\psi_\delta\right) + \delta \tilde\psi_\delta = 0\,,
\end{equation}
with nonlinear boundary conditions
\begin{equation*}
    - \tilde{A}_\delta \partial_x \tilde{\psi}_\delta =
    \begin{cases}
        \alpha \frac{1}{1 + e^{\tilde{\psi}_\delta - x}} & \text{ at } x = 0\,,
        \\
        \beta \frac{e^{\tilde{\psi}_\delta - x}}{1 + e^{\tilde{\psi}_\delta - x}} & \text{ at } x = L\,.
    \end{cases}
\end{equation*}
This is the Euler-Lagrange equation corresponding to the energy functional
\begin{equation*}
    \mathcal{E}[\tilde{\psi}_\delta] :=
    \frac{1}{2} \int_0^L \left( k \tilde{A}_\delta |\nabla \tilde{\psi}_\delta|^2 +
        \delta |\tilde{\psi}_\delta|^2  \right) \dd x
        - \alpha F(\tilde\psi_\delta(0)) + \beta G(\tilde\psi_\delta(L))
        = 0\,,
\end{equation*}
where $F$ and $G$ are such that
\begin{equation*}
    \partial_\psi F(\psi) = - k(0) \frac{1}{1+e^{\psi-x}}\,, \quad
    \partial_\psi G(\psi) = - k(L) \frac{e^{\psi-x}}{1+e^{\psi-x}}\,.
\end{equation*}
By the convexity of $-F$ and $G$, we have that $\mathcal E$ is convex. It is also coercive in the $H^1$-norm, since
the operator $\tilde{A}_\delta$ satisfies $\delta \le \tilde{A}_\delta \le \delta + \frac{1}{4}$.
This enough to show existence of a unique minimiser $\tilde{\psi}_\delta \in H^1$ of $\mathcal E$, and thus a weak solution of \eqref{eq:existence regularized equation}.
Then, the bounds on $k$ allow us to follow the arguments of Burger and Pietschmann and take the limit $\delta \rightarrow 0$, see \cite{burger_flow_2016}.
\end{proof}

\begin{lemma}[Uniqueness]
    \label{lem:uniqueness}
    If Equation \eqref{eq:reduced equation rho} with boundary conditions \eqref{eq:boundary conditions reduced rho}
    admits a weak solution $\rho~\in~H^1$, then it is unique.
\end{lemma}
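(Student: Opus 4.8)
The plan is to exploit the gradient-flow / entropy structure already set up in the proof of Theorem~\ref{thm:existence}. Writing $\varepsilon = 1$ after the usual scaling, a weak solution $\rho \in H^1$ corresponds, via the entropy variable $\psi = \log\rho - \log(1-\rho) + x$, to a weak solution of $\partial_x(k(x) A(\psi)\partial_x\psi) = 0$ with the stated nonlinear boundary conditions, where $A(\psi) = e^{\psi-x}/(1+e^{\psi-x})^2 > 0$. The key observation is that $\rJ := k(x)\,j(x) = -k(x)A(\psi)\partial_x\psi$ is constant in $x$ by~\eqref{eq:reduced equation rho}, and that the boundary conditions pin down this constant in terms of the boundary values of $\rho$: at $x=0$ we get $\rJ = k(0)\alpha(1-\rho(0))$ and at $x=L$ we get $\rJ = k(L)\beta\rho(L)$.

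First I would suppose $\rho_1, \rho_2$ are two weak solutions, with corresponding entropy variables $\psi_1, \psi_2$ and (constant) fluxes $\rJ_1, \rJ_2$. Testing the weak formulation for $\rho_i$ against $\phi = \psi_1 - \psi_2$ and subtracting, one obtains an identity of the form
\begin{equation*}
    \int_0^L k(x)\big(A(\psi_1)\partial_x\psi_1 - A(\psi_2)\partial_x\psi_2\big)\,\partial_x(\psi_1-\psi_2)\,\dd x = (\text{boundary terms}).
\end{equation*}
The left-hand side is, after integration by parts using $\partial_x(k A(\psi_i)\partial_x\psi_i)=0$, simply $-(\rJ_1-\rJ_2)\big[(\psi_1-\psi_2)\big]_0^L$; evaluating the boundary contributions using the expressions for $\rJ_i$ above shows that everything can be rewritten as a sum of terms each of which is monotone in the appropriate boundary value. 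Concretely, the map $\rho(0)\mapsto \alpha(1-\rho(0))$ is strictly decreasing and $\rho(L)\mapsto \beta\rho(L)$ is strictly increasing, while $\psi$ is strictly increasing in $\rho$; this sign structure forces the boundary terms to have a definite sign opposite to that of the (nonnegative, since $A>0$) interior dissipation term, so both must vanish. Vanishing of the interior term gives $\partial_x\psi_1 = \partial_x\psi_2$ a.e., hence $\psi_1 - \psi_2$ is constant; vanishing of the boundary terms then forces that constant to be zero, whence $\psi_1 \equiv \psi_2$ and therefore $\rho_1 \equiv \rho_2$ since $\rho\mapsto\psi$ is a bijection on $(0,1)$. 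Note that Theorem~\ref{thm:existence} guarantees $0<\rho_i<1$, so $\psi_i$ is well defined and the monotonicity is strict.

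The main obstacle I anticipate is bookkeeping the boundary terms correctly: one must be careful that the test function $\phi=\psi_1-\psi_2$ lies in $H^1$ (which follows from $0<\rho_i<1$ and $\rho_i\in H^1$, but deserves a line), and that the nonlinear boundary terms combine into something with an unambiguous sign. An alternative, perhaps cleaner, route avoids subtracting the weak formulations and instead uses displacement convexity: the entropy $E$ in~\eqref{eq:entropy} is strictly convex in $\rho$, the dynamics~\eqref{eq:stationary states gradient flow formulation} is its (weighted) gradient flow, and a stationary point of a strictly convex functional subject to the given dissipation metric is unique — this is exactly the argument referenced from~\cite{burger_flow_2016} for the potential case, which carries over verbatim once $k$ is bounded away from zero. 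I would present the direct energy estimate as the main proof and remark that it is the specialization of the convexity argument, so the write-up stays short.
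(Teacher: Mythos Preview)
Your argument has a genuine gap. The key step is the claim that the interior term
\[
\int_0^L k\big(A(\psi_1)\partial_x\psi_1 - A(\psi_2)\partial_x\psi_2\big)\,\partial_x(\psi_1-\psi_2)\,\dd x
\]
is nonnegative ``since $A>0$''. This is false: for $a_1,a_2>0$ the pointwise quantity $(a_1p_1-a_2p_2)(p_1-p_2)$ can be negative (take $a_1=1$, $a_2=4$, $p_1=2$, $p_2=1$). The nonlinear operator $\psi\mapsto -\partial_x(kA(\psi)\partial_x\psi)$ is \emph{not} monotone merely because the mobility is positive. Without this sign, your two computations of the interior term---once via the weak form, once via integration by parts using $kA(\psi_i)\partial_x\psi_i=\mathrm{const}$---combine into a tautology: you recover exactly the boundary identity $\rJ_1-\rJ_2=-k(0)\alpha(\rho_1(0)-\rho_2(0))=k(L)\beta(\rho_1(L)-\rho_2(L))$ and nothing more. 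No term is forced to vanish.

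The paper's proof supplies the missing idea. It works directly with $v=\rho_1-\rho_2$, which solves the linear (in $v$) equation $\partial_x\big(k[-\varepsilon\partial_x v+(1-\rho_1-\rho_2)v]\big)=0$ with Robin-type boundary conditions. The crucial step is the integrating factor: setting $\varepsilon\partial_x V=1-\rho_1-\rho_2$ and writing $v=e^{V}z$ kills the first-order term and leaves the \emph{self-adjoint} equation $-\varepsilon\partial_x(k\,e^{V}\partial_x z)=0$ with boundary conditions $\varepsilon\partial_x z=\alpha z$ at $x=0$ and $\varepsilon\partial_x z=-\beta z$ at $x=L$. Testing against $z$ now gives a genuine sum of squares,
\[
\varepsilon\int_0^L k\,e^{V}(\partial_x z)^2\,\dd x+\alpha k(0)e^{V(0)}z(0)^2+\beta k(L)e^{V(L)}z(L)^2=0,
\]
whence $z\equiv 0$. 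This integrating-factor/self-adjoint reduction is precisely what your entropy-variable subtraction lacks; your fallback to ``displacement convexity'' is not a proof either, since stationary points of a convex entropy with a density-dependent mobility are not automatically unique without exactly this kind of computation.
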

\begin{proof}
    Our strategy is along the lines of the one in \cite{burger_flow_2016}. Suppose that $\rho_1$ and $\rho_2$ are two solutions of \eqref{eq:reduced equation rho}-\eqref{eq:boundary conditions reduced rho} and define $v = \rho_1 - \rho_2$,
    which solves
    \begin{equation*}
        \partial_x \left(k(x) \left[-\varepsilon \partial_x v + (1-\rho_1-\rho_2)v \right]\right)
    \end{equation*}
    with the boundary conditions:
    \begin{align*}
    \begin{aligned}
        \varepsilon\, \partial_x v &= \alpha v + \left(1-\rho_1-\rho_2\right) v\,, & \text{ at } x &= 0\,,
        \\
        \varepsilon\, \partial_x v &= -\beta v + \left(1-\rho_1-\rho_2\right) v\,, & \text{ at } x &= L\,.
        \end{aligned}
    \end{align*}
    Let $V$ be such that $\varepsilon \partial_x V = \left(1-\rho_1-\rho_2\right)$ and $z$ such that $v = e^V z$.
    Then, $z$ is a weak solution of
    \begin{equation}\label{eq:z}
        -\varepsilon \partial_x \left(k(x)\, e^V \partial_x z\right) = 0
    \end{equation}
    with boundary conditions
    \begin{align*}
    \begin{aligned}
        \varepsilon\, \partial_x z &= \alpha z\,, & \text{ at } x &= 0\,,
        \\
        \varepsilon\, \partial_x z &= -\beta z\,, & \text{ at } x &= L\,,
    \end{aligned}
    \end{align*}
    Multiplying \eqref{eq:z} by $z$ and using the corresponding boundary conditions yields 
    \begin{equation*}
        \varepsilon \int_0^L k(x)\, e^V \left(\partial_x z\right)^2 \; \dd x
        + \alpha k(0) e^{V(0)} z(0)^2 + \beta k(L) e^{V(L)} z(L)^2 = 0\,,
    \end{equation*}
    and therefore $z = v \equiv 0$.
\end{proof}

\subsection{Properties of solutions}
\begin{lemma}[Symmetry]
    \label{lem:symmetry}
Let $\rho\in H^1$ be a weak solution of \eqref{eq:reduced equation rho} with boundary conditions \eqref{eq:boundary conditions reduced rho}. Then, the function $\tilde\rho$ given by $\tilde\rho(x) := 1 - \rho(L-x)$ is also a weak solution with $\tilde k(x) = k(L-x)$,
$\tilde\alpha = \beta$ and $\tilde\beta = \alpha$.
\end{lemma}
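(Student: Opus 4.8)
The plan is to verify directly that $\tilde\rho$ satisfies the weak formulation of Definition~\ref{def:weak solution} with data $\tilde k(x)=k(L-x)$, $\tilde\alpha=\beta$, $\tilde\beta=\alpha$, by means of the change of variables $s=L-x$. The guiding intuition is the strong form: for a classical solution $\rho$ one has $kj=\mathrm{const}$ with $j=-\varepsilon\partial_x\rho+\rho(1-\rho)$, and the reflected density $\tilde\rho(x)=1-\rho(L-x)$ carries the flux $\tilde j(x):=-\varepsilon\partial_x\tilde\rho(x)+\tilde\rho(x)(1-\tilde\rho(x))=j(L-x)$, since $\rho\mapsto1-\rho$ leaves the nonlinearity $\rho(1-\rho)$ invariant while reflecting $x\mapsto L-x$ contributes two sign changes that cancel in $\partial_x\tilde\rho$. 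Hence $\tilde k(x)\tilde j(x)=(kj)(L-x)$ is again constant, and evaluating at the endpoints swaps the two conditions in \eqref{eq:boundary conditions reduced rho}: $\tilde j(0)=j(L)=\beta\rho(L)=\beta\big(1-\tilde\rho(0)\big)$ and $\tilde j(L)=j(0)=\alpha(1-\rho(0))=\alpha\,\tilde\rho(L)$.

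To make this rigorous at the level of weak solutions, I would first note that $\tilde\rho\in H^1(0,L)$ because the reflection $u\mapsto u(L-\cdot)$ is a bounded isomorphism of $H^1(0,L)$ (and $0<\tilde\rho<1$ follows from $0<\rho<1$). I would then fix an arbitrary test function $\phi\in H^1(0,L)$ and set $\psi(s):=\phi(L-s)\in H^1(0,L)$; since $\phi\mapsto\psi$ is a bijection of $H^1(0,L)$, it suffices to check the weak identity for $\tilde\rho$ against every such $\phi$. Using $\partial_x\tilde\rho(x)=(\partial_x\rho)(L-x)$, $\partial_x\phi(x)=-(\partial_s\psi)(L-x)$, the invariance $\tilde\rho(1-\tilde\rho)=\big(\rho(1-\rho)\big)(L-x)$, and the substitution $s=L-x$ (Jacobian $\dd x=-\dd s$, with flipped limits of integration), a short computation gives, for the bulk term,
\[
\int_0^L \tilde k\,\big(\varepsilon\,\partial_x\tilde\rho+\tilde\rho(1-\tilde\rho)\big)\,\partial_x\phi\,\dd x
= -\int_0^L k\,\big(\varepsilon\,\partial_s\rho+\rho(1-\rho)\big)\,\partial_s\psi\,\dd s\,.
\]

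For the boundary terms, the identities $\tilde k(0)=k(L)$, $\tilde k(L)=k(0)$, $1-\tilde\rho(0)=\rho(L)$, $\tilde\rho(L)=1-\rho(0)$, $\phi(0)=\psi(L)$, $\phi(L)=\psi(0)$ together with $\tilde\alpha=\beta$, $\tilde\beta=\alpha$ give
\[
-\tilde\alpha\,\tilde k(0)\big(1-\tilde\rho(0)\big)\phi(0)+\tilde\beta\,\tilde k(L)\,\tilde\rho(L)\,\phi(L)
= -\Big(-\alpha\,k(0)(1-\rho(0))\,\psi(0)+\beta\,k(L)\rho(L)\,\psi(L)\Big)\,.
\]
Adding the bulk and boundary contributions, the weak form for $\tilde\rho$ tested against $\phi$ equals $(-1)$ times the weak form for $\rho$ tested against $\psi$, which vanishes because $\rho$ is a weak solution; hence $\tilde\rho$ is a weak solution with data $(\tilde k,\tilde\alpha,\tilde\beta)$. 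I do not expect a genuine obstacle here: the only point needing care is the sign bookkeeping — the two chain-rule sign changes, the Jacobian and limit flip in $s=L-x$, and pairing each endpoint with its transformed coefficient — and the overall factor $(-1)$ is harmless since the weak identity is an equality with zero (equivalently, one may test against $-\psi$).
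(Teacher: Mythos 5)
Your proof is correct and follows essentially the same route as the paper: the paper's proof is a one-line sketch invoking exactly the two identities you use, $\partial_x\tilde\rho(x)=\partial_x\rho(L-x)$ and $\tilde\rho(1-\tilde\rho)(x)=\big(\rho(1-\rho)\big)(L-x)$, interpreted in the distributional/weak sense. You simply carry out in full the change of variables $s=L-x$ in the weak formulation and the bookkeeping of the boundary terms, which matches the intended argument.
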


\begin{proof}
The proof is straightforward using $\partial_x \tilde{\rho}(x) = \partial_x \rho(L-x)$ and $\tilde{\rho}(x)\left(1-\tilde{\rho}(x)\right) = \rho(L-x)\left(1-\rho(L-x)\right)$ in the distributional sense.
\end{proof}

\begin{lemma}[Maximum principle]
    \label{lem:maximum principle}
    Assume that $k \in C^1$. Let $\rho$ be a $C^2$ solution to \eqref{eq:reduced equation rho} with $0 < \rho < 1$. If $\rho$ attains a local minimum (maximum) at $x_m \in (0, L)$ ($x_M$ in $(0,L)$), then $\partial_x k(x_m) > 0$ ($\partial_x k(x_M) < 0$).
\end{lemma}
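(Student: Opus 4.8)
The key observation is that \eqref{eq:reduced equation rho} is in conservation form: integrating once gives $k(x)\,j(x)\equiv J$ for some constant $J\in\mathbb{R}$, where $j=-\varepsilon\partial_x\rho+\rho(1-\rho)$. Since $k$ is a width multiplied by an averaged (positive) velocity, $k>0$ on $[0,L]$, so $j(x)=J/k(x)$. I would prove the statement for a local minimum; the maximum case then follows by applying it to $\tilde\rho(x):=1-\rho(L-x)$, which by Lemma~\ref{lem:symmetry} solves the same problem with $\tilde k(x)=k(L-x)$: an interior maximum of $\rho$ at $x_M$ is an interior minimum of $\tilde\rho$ at $L-x_M$, and $\partial_x\tilde k(L-x_M)=-\partial_x k(x_M)$, so $\partial_x\tilde k(L-x_M)>0$ translates into $\partial_x k(x_M)<0$.

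First I would extract sign information at $x_m$ from the first-order condition. Since $x_m\in(0,L)$ is a local minimum of the $C^2$ function $\rho$, we have $\partial_x\rho(x_m)=0$, hence $j(x_m)=\rho(x_m)(1-\rho(x_m))>0$ because $0<\rho<1$; therefore $J=k(x_m)\,j(x_m)>0$ and $j=J/k>0$ on all of $[0,L]$. Next I would differentiate the conservation law. Expanding $\partial_x(kj)=0$ yields the second-order form
\begin{equation*}
 \varepsilon k(x)\,\partial_x^2\rho=\bigl(k(x)(1-2\rho)-\varepsilon\,\partial_x k(x)\bigr)\partial_x\rho+\partial_x k(x)\,\rho(1-\rho),
\end{equation*}
and evaluating at $x_m$, where $\partial_x\rho(x_m)=0$, leaves $\varepsilon k(x_m)\,\partial_x^2\rho(x_m)=\partial_x k(x_m)\,\rho(x_m)(1-\rho(x_m))$. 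The second-order condition $\partial_x^2\rho(x_m)\ge 0$ together with $k(x_m)>0$ and $\rho(x_m)(1-\rho(x_m))>0$ then already gives $\partial_x k(x_m)\ge 0$.

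The delicate part, which I expect to be the main obstacle, is promoting $\ge 0$ to $>0$, i.e.\ ruling out $\partial_x^2\rho(x_m)=0$. Here I would argue by contradiction, assuming $\partial_x k(x_m)\le 0$, and exploit that $\rho$ solves a scalar first-order ODE. Writing $\varepsilon\,\partial_x\rho=\Psi$ with $\Psi:=\rho(1-\rho)-J/k$, we have $\Psi(x_m)=0$; at a genuine interior minimum $\partial_x\rho$, and hence $\Psi$, changes sign from nonpositive to nonnegative across $x_m$. Since $\partial_x\Psi(x_m)=(1-2\rho(x_m))\partial_x\rho(x_m)+J\,\partial_x k(x_m)/k(x_m)^2=J\,\partial_x k(x_m)/k(x_m)^2\le 0$, a strictly negative value would make $\Psi$ strictly decreasing at $x_m$, hence positive just to the left and negative just to the right, so $x_m$ would be a local maximum, a contradiction. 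This disposes of the case $\partial_x k(x_m)<0$; the residual borderline case $\partial_x k(x_m)=0$ is the one that genuinely needs care, since $k\in C^1$ only, so one cannot differentiate $\Psi$ again, and a direct study of the local behaviour of $\rho$ near $x_m$ (or an additional non-degeneracy hypothesis on the extremum) is needed to exclude it. Apart from this last point, the argument is short and self-contained.
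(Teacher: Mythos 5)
Your argument is, at its core, the same as the paper's: differentiate $\partial_x(kj)=0$, evaluate at the critical point to obtain $\varepsilon k(x_m)\partial_{xx}\rho(x_m)=\partial_x k(x_m)\,\rho(x_m)(1-\rho(x_m))$, and read off the sign of $\partial_x k(x_m)$ from the sign of $\partial_{xx}\rho(x_m)$, with the maximum case handled by symmetry (the paper just says ``the same argument holds for $x_M$''). Your additional first-order argument with $\Psi=\rho(1-\rho)-J/k$ to exclude $\partial_x k(x_m)<0$ is correct but redundant, since the second-derivative test already gives $\partial_x k(x_m)\ge 0$.

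The ``delicate part'' you single out is exactly the step the paper glosses over: its proof simply asserts $\partial_{xx}\rho(x_m)>0$, whereas a local minimum only guarantees $\partial_{xx}\rho(x_m)\ge 0$. Your caution is in fact warranted, because the strict conclusion can fail at a degenerate minimum: take $\rho_0\in(0,1)$, set $\rho(x)=\rho_0+c\,(x-x_m)^4$ on $[0,L]$ with $c>0$ small, put $J=\rho_0(1-\rho_0)$ and define $k:=J/\bigl(\rho(1-\rho)-\varepsilon\partial_x\rho\bigr)$, which is smooth and positive for $c$ small enough; then $k\,j\equiv J$, so $\rho$ is a $C^2$ solution of \eqref{eq:reduced equation rho} with $0<\rho<1$ and a strict interior minimum at $x_m$, yet $\partial_x k(x_m)=0$. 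Hence the statement is only true with ``$\ge$'' in general, and the strict version requires a nondegeneracy hypothesis such as $\partial_{xx}\rho(x_m)>0$. Note that the weak inequality is all that is needed downstream: the Proposition following the lemma assumes $\partial_x k<0$ (resp.\ $>0$) on the whole interval, and $\partial_x k(x_m)\ge 0$ at an interior minimum already yields the contradiction. So your proof covers everything the paper actually uses; the residual gap you identify is a gap in the paper's own one-line proof (and in the strict form of the statement), not a defect of your approach.
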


\begin{proof}
    Since $\partial_x\rho(x_m) = 0$, we have that
    \begin{equation*}
        \varepsilon k(x_m) \partial_{xx} \rho(x_m) - \rho(x_m) (1-\rho(x_m)) \partial_x k(x_m) = 0\,,
    \end{equation*}
    with $\partial_{xx} \rho(x_m) > 0$, and therefore $\partial_x k(x_m) > 0$. The same argument holds for $x_M$.
\end{proof}

As a direct consequence of Lemma~\ref{lem:maximum principle}, we have the following

\begin{proposition}
If $k \in C^1$ with $\partial_x k < 0$ (resp. $\partial_x k > 0$) then any solution $\rho$ of \eqref{eq:reduced equation rho} such that $\rho \in C^2$ and is bounded between $0$ and $1$ has no minimum (resp. maximum) on $(0, L)$ and is such that $\rho \ge \min\left(\alpha, 1 - \beta\right)$ (resp. $\rho \le \max\left(\alpha, 1 - \beta\right)$).
\end{proposition}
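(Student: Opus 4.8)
The plan is to combine the maximum principle of Lemma~\ref{lem:maximum principle} with a boundary analysis based on the flux conditions \eqref{eq:boundary conditions reduced rho}. I would treat the case $\partial_x k<0$ only; the case $\partial_x k>0$ then follows immediately from the symmetry Lemma~\ref{lem:symmetry}, which maps a solution for $(k,\alpha,\beta)$ to one for $(k(L-\cdot),\beta,\alpha)$ and converts the lower bound $\rho\ge\min(\alpha,1-\beta)$ into the upper bound $\rho\le\max(\alpha,1-\beta)$ (and "no interior minimum" into "no interior maximum").

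First, I would note that the statement that $\rho$ has no interior minimum on $(0,L)$ is essentially Lemma~\ref{lem:maximum principle}: an interior local minimum $x_m$ would force $\partial_x k(x_m)>0$, contradicting $\partial_x k<0$. The one point needing a little care is the possibly degenerate situation $\partial_{xx}\rho(x_m)=0$; there I would simply evaluate \eqref{eq:reduced equation rho} at a critical point, $\varepsilon k(x_m)\,\partial_{xx}\rho(x_m)=\rho(x_m)(1-\rho(x_m))\,\partial_x k(x_m)$, observe that at an interior minimum $\partial_{xx}\rho(x_m)\ge 0$, and conclude $\partial_x k(x_m)\ge 0$ since $0<\rho<1$ and $k>0$; this already contradicts the \emph{strict} inequality $\partial_x k<0$. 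Consequently the global minimum of $\rho$ over the compact interval $[0,L]$, which exists because $\rho\in C^2([0,L])$, must be attained at $x=0$ or at $x=L$.

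It then remains to bound $\rho$ at the endpoints. At $x=0$ I would rewrite the boundary condition $-\varepsilon\partial_x\rho(0)+\rho(0)(1-\rho(0))=\alpha(1-\rho(0))$ as $\varepsilon\,\partial_x\rho(0)=(1-\rho(0))(\rho(0)-\alpha)$; if the global minimum sits at $0$ then the one-sided derivative satisfies $\partial_x\rho(0)\ge 0$, and since $1-\rho(0)>0$ this yields $\rho(0)\ge\alpha$. Symmetrically, at $x=L$ the condition $-\varepsilon\partial_x\rho(L)+\rho(L)(1-\rho(L))=\beta\rho(L)$ becomes $\varepsilon\,\partial_x\rho(L)=\rho(L)(1-\rho(L)-\beta)$; if the minimum is at $L$ then $\partial_x\rho(L)\le 0$, and since $\rho(L)>0$ this gives $\rho(L)\ge 1-\beta$. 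In either case $\min_{[0,L]}\rho\ge\min(\alpha,1-\beta)$, which is the assertion; applying the same reasoning with maxima, or invoking Lemma~\ref{lem:symmetry}, disposes of the $\partial_x k>0$ case.

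I expect the whole argument to be short and essentially routine. The only slightly delicate step is excluding a degenerate interior minimum, which I would handle as above by playing the \emph{strict} sign of $\partial_x k$ against the merely non-strict conclusion $\partial_x k(x_m)\ge 0$ coming from the second-order condition; everything else is bookkeeping with the boundary conditions and one-sided derivatives.
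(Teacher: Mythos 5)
Your proposal is correct and takes essentially the same route as the paper: Lemma~\ref{lem:maximum principle} rules out interior minima, so the global minimum sits at an endpoint, where the boundary conditions $\varepsilon\partial_x\rho(0)=(\rho(0)-\alpha)(1-\rho(0))$ and $\varepsilon\partial_x\rho(L)=\rho(L)(1-\rho(L)-\beta)$ together with the sign of the one-sided derivative give $\rho(0)\ge\alpha$ or $\rho(L)\ge 1-\beta$, and the case $\partial_x k>0$ follows by the symmetry of Lemma~\ref{lem:symmetry}. Your additional handling of a possibly degenerate interior minimum (using $\partial_{xx}\rho(x_m)\ge 0$ against the strict sign of $\partial_x k$) is a small but welcome refinement of the paper's argument, which tacitly assumes $\partial_{xx}\rho(x_m)>0$.
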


\begin{proof}
    We will only describe the case $\partial_x k<0$. By Lemma~\ref{lem:maximum principle}, $\rho$ has no minimum on $(0,L)$. If the minimum is attained for $\xi=0$, then~\eqref{eq:boundary conditions reduced rho} gives
    \begin{equation*}
        \varepsilon \partial_x \rho = \left(\rho - \alpha\right) \left(1- \rho\right) > 0\,.
    \end{equation*}
    Similarly, if the minimum is attained for $x=L$ we have
    \begin{equation*}
        \varepsilon \partial_x \rho = \rho \left(1- \rho - \beta\right) < 0\,,
    \end{equation*}
    and since $0 \le \rho \le 1$, we have that $\rho \ge \min\left(\alpha, 1-\beta\right)$.
\end{proof}

We conclude by discussing the dependence of the flux $\rJ = kj$ on $\alpha$ and $\beta$.

\begin{lemma}[Monotonicity of $\rJ$]
    \label{lem:monotonicity j}
    Let $0 < \alpha,\beta < 1$. Moreover, let $k$ be a piecewise continuous function satisfying assumptions of Theorem~\ref{thm:existence}, and $\rho$ be the corresponding solution of Equations \eqref{eq:reduced equation rho}-\eqref{eq:boundary conditions reduced rho}. Then, $k \left(-\varepsilon \partial_x \rho + \rho(1-\rho)\right) \equiv \rJ \in \mathbb{R}$, where $\rJ$ is an increasing function of $\alpha$ and $\beta$.
\end{lemma}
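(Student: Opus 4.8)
Since $\rJ$ is by construction the quantity whose $x$-derivative vanishes in \eqref{eq:reduced equation rho}, its constancy is immediate; the real content is the monotonicity. The plan is to prove that $\rJ$ is strictly increasing in $\alpha$ for every admissible $k$ and every fixed $\beta$, and then to deduce the statement for $\beta$ from Lemma~\ref{lem:symmetry}: if $\tilde\rho(x) = 1-\rho(L-x)$ is the solution for the data $(\tilde k,\tilde\alpha,\tilde\beta) = (k(L-\cdot),\beta,\alpha)$, then reading \eqref{eq:boundary conditions reduced rho} at $x=L$ for $\rho$ and at $x=0$ for $\tilde\rho$ shows that the two problems share the same flux constant, i.e.\ $\rJ(k,\alpha,\beta) = \rJ(k(L-\cdot),\beta,\alpha)$; increasing the former in $\beta$ is the same as increasing the latter in its inflow rate. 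So fix $\beta$ and $k$, take $\alpha_1 < \alpha_2$ in $(0,1)$, let $\rho_1,\rho_2$ be the corresponding (unique, by Lemma~\ref{lem:uniqueness}) solutions with flux constants $\rJ_1,\rJ_2$, and set $v=\rho_2-\rho_1$, $\Delta\alpha=\alpha_2-\alpha_1>0$, $\Delta\rJ=\rJ_2-\rJ_1$; the goal is $\Delta\rJ>0$.

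Subtracting the two identities $k(-\varepsilon\partial_x\rho_i+\rho_i(1-\rho_i))=\rJ_i$ and using $\rho_2(1-\rho_2)-\rho_1(1-\rho_1)=(1-\rho_1-\rho_2)v$ gives the linear equation
\begin{equation*}
    -\varepsilon k\,\partial_x v + k\,(1-\rho_1-\rho_2)\,v = \Delta\rJ .
\end{equation*}
I would eliminate the zeroth-order term with the integrating factor from the proof of Lemma~\ref{lem:uniqueness}: let $W(x)=\tfrac{1}{\varepsilon}\int_0^x(1-\rho_1-\rho_2)\,\dd s$ and $z=e^{-W}v$, so that the equation becomes $\partial_x z = -\tfrac{\Delta\rJ}{\varepsilon k(x)}\,e^{-W(x)}$. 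Since $k>0$, this means $z$ is strictly monotone on $[0,L]$ --- strictly increasing if $\Delta\rJ<0$, constant if $\Delta\rJ=0$. (As $k$ is only piecewise continuous, $\rho_i$ is Lipschitz and hence so are $v$, $W$, $z$, so these computations hold a.e.; this is the only mildly technical point.)

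It then remains to evaluate $z$ at the endpoints. Using $W(0)=0$ and the boundary condition $\varepsilon\partial_x\rho_i=(\rho_i-\alpha_i)(1-\rho_i)$ at $x=0$, a short computation gives $\varepsilon\partial_x z(0)=\alpha_1 z(0)-\Delta\alpha(1-\rho_2(0))$; combined with $\varepsilon\partial_x z(0)=-\Delta\rJ/k(0)$ this yields
\begin{equation*}
    z(0)=\frac{1}{\alpha_1}\left(\Delta\alpha\,(1-\rho_2(0))-\frac{\Delta\rJ}{k(0)}\right).
\end{equation*}
Similarly, the condition $\varepsilon\partial_x\rho_i=\rho_i(1-\rho_i-\beta)$ at $x=L$ gives $\varepsilon\partial_x z(L)=-\beta z(L)$, hence $z(L)=\Delta\rJ/(\beta k(L))$. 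Now assume $\Delta\rJ<0$ for contradiction: then $z$ is strictly increasing, so $z(0)<z(L)=\Delta\rJ/(\beta k(L))<0$; but since $\Delta\alpha>0$, $0<\rho_2(0)<1$ and $-\Delta\rJ/k(0)>0$, the displayed formula forces $z(0)>0$, a contradiction. Hence $\Delta\rJ\ge0$, and $\Delta\rJ=0$ is excluded as well: it would make $z$ constant with $z(L)=0$, so $v\equiv0$ and $\rho_1\equiv\rho_2$, and then the $x=0$ boundary condition for both would give $(\alpha_2-\alpha_1)(1-\rho_1(0))=0$, impossible. Therefore $\Delta\rJ>0$.

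The bulk of the work is the routine but slightly fiddly algebra turning the two Robin-type boundary conditions for $v$ into the clean conditions for $z$; once that is done the sign contradiction is immediate, and no further obstacle is expected.
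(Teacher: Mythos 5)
Your proof is correct, but it takes a genuinely different route from the paper. The paper argues by contradiction on the ordering of the two flux constants: equality $\rJ_0=\rJ_1$ is excluded by viewing $\varepsilon\partial_x\rho=\rho(1-\rho)-\rJ k^{-1}$ as an initial value problem from $x=0$ and applying Picard--Lindel\"of piecewise between the discontinuities of $k$, while the wrong strict ordering is excluded by a first-crossing-point comparison (at the first $x^*$ where the two profiles meet, the left derivatives are ordered the wrong way); the argument is run in $\beta$ and declared identical for $\alpha$. You instead subtract the two flux identities, obtain the linear equation $-\varepsilon k\,\partial_x v+k(1-\rho_1-\rho_2)v=\Delta\rJ$ for $v=\rho_2-\rho_1$, remove the zeroth-order term with the same integrating factor as in the uniqueness proof (Lemma~\ref{lem:uniqueness}), so that $z=e^{-W}v$ satisfies $\partial_x z=-\tfrac{\Delta\rJ}{\varepsilon k}e^{-W}$ and has a sign-definite derivative, and then read off the endpoint identities $z(0)=\alpha_1^{-1}\bigl(\Delta\alpha(1-\rho_2(0))-\Delta\rJ/k(0)\bigr)$ and $\varepsilon\partial_x z(L)=-\beta z(L)$ to reach a sign contradiction; the $\beta$-monotonicity is then deduced from the reflection symmetry of Lemma~\ref{lem:symmetry} together with the (correct) observation that the flux constant is invariant under that reflection. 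Both arguments are sound and rely on the same ingredients ($0<\rho<1$ from Theorem~\ref{thm:existence}, the Robin form of \eqref{eq:boundary conditions reduced rho}, and $k$ bounded away from zero); your version buys a single unified computation that handles $\Delta\rJ=0$ and $\Delta\rJ<0$ at once and copes with piecewise continuous $k$ by mere a.e.\ differentiation of Lipschitz functions, at the price of the boundary algebra for $z$, whereas the paper's version is more elementary ODE comparison but needs the piecewise uniqueness step and left limits at the discontinuities. Two cosmetic points: your formula for $z(L)$ should carry the harmless positive factor $e^{-W(L)}$, i.e.\ $z(L)=\Delta\rJ\,e^{-W(L)}/(\beta k(L))$, which does not affect the sign argument; and in the case $\Delta\rJ=0$ you could conclude even faster from your own identity, since it forces $z(0)=\Delta\alpha(1-\rho_2(0))/\alpha_1>0$ while constancy and $z(L)=0$ force $z\equiv 0$.
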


\begin{proof}
    We argue by contradiction and outline the argument for $\beta$ only (since it is identical for $\alpha$).
    Let $(x_i)_{1\le i \le {n-1}}$ denote the points where $k$ is discontinuous. For consistency we define $x_n = L$.
    The boundary conditions are
    \begin{equation*}
        \rJ = k(0) \alpha(1-\rho(0)) = k(L) \beta \rho(L),
    \end{equation*}
    and we consider two solutions $(\rho_0, \rJ_0)$, $(\rho_1, \rJ_1)$ corresponding to $(\alpha, \beta_0)$ and $(\alpha, \beta_1)$ respectively, with $\beta_0 < \beta_1$.
    We recall from Theorem~\ref{thm:existence} that $\rho_0$ and $\rho_1$ are in $H^1(0, L)$.
    Suppose $\rJ_0 = \rJ_1$, in which case $\rho_0(0) = \rho_1(0)$ and $\rho_0(L) > \rho_1(L)$. Both are solution
    of the following initial value problem:
    \begin{equation*}
        (P_{t_0, f_0, G}) := \left\{
    \begin{aligned}
        \partial_x f(t) &= \varepsilon^{-1} \left(G k^{-1}(t) - f(t) (1-f(t))\right)\,,
        \\
        f(t_0) &= f_0\,.
    \end{aligned}\right.
    \end{equation*}
    with $t_0=0$, $f_0 = \rho_0(0) = \rho_1(0)$ and $G = \rJ_0 = \rJ_1$.
    By the Picard-Lindel\"of theorem, $\rho_0(x) = \rho_1(x)$ on $[0, x_1)$ and on $[0, x_1]$ by continuity.
    One can repeat the argument for $(P_{x_m, \rho_0(x_m), \rJ_0})$ for $1 \le m \le n-1$,
    to eventually get $\rho_0(L) = \rho_1(L)$, which is a contradiction, so that $\rJ_0 \neq \rJ_1$.
        \par Suppose $\rJ_0 > \rJ_1$, then $\rho_0(0) < \rho_1(0)$ and $\rho_0(L) > \rho_1(L)$. We denote $x^* := \min\{x:\rho_0(x) = \rho_1(x)\}$, which is non-empty by continuity. In particular, $\rho_0(x) < \rho_1(x)$ for $x < x^*$ locally.  The function $k$ can be discontinuous at $x^*$, but it still admits a left limit $k^-(x^*)$, so one can consider the left limits
        \begin{equation*}
            \partial_x^- \rho_i(x^*) = \lim_{x\rightarrow x^*} \rho_i(x)\,.
        \end{equation*}
        Since $\rJ_0 > \rJ_1$, we have that $\partial_x^- \rho(x^*) < \partial_x^- \rho_1(x^*)$, which is a contradiction.
\end{proof}

%%%%%%%%%%%%%%%%%%%%%%%%%%%%%%%%%%%%%%%%%%%%%%%%%%%%

\section{Stationary profiles for monotonic width corridors}\label{sec:vanvis}
In this section, we investigate the stationary profiles of Equation \eqref{eq:reduced equation rho}-\eqref{eq:boundary conditions reduced rho} for closing corridors, i.e.~we consider $C^1$ positive functions $k$, which satisfy
\begin{equation} \label{eq:g}
    g(x):=\frac{1}{k(x)} \frac{dk(x)}{dx} < 0.
\end{equation}
Note that due to symmetry (using Lemma ~\ref{lem:symmetry}) all results of this section generalise to $g > 0$. We will, however, only consider the closing case, as it is of greater interest. Throughout this section we assume w.l.o.g.~that $L=1$.
In the following, we construct stationary profiles to Equations \eqref{eq:reduced equation rho}-\eqref{eq:boundary conditions reduced rho} using GSPT. We identify $6$ regions in the $(\alpha,\beta)$ parameter space, in which the stationary profiles have the same structure (with respect to boundary layers). Interestingly, it turns out that the location of the boundary between these regions depends only on the values of $k$ at the boundary of the spatial domain. \\
We add the trivial dynamics of $x$ to the system by introducing $\xi=x$ and including the equation $\dt{\xi}=1$, where $\dt{\phantom{x}}=\frac{d}{dx}$. This transforms Equations \eqref{eq:reduced equation rho}-\eqref{eq:boundary conditions reduced rho} into a system of first order ODEs:
\begin{equation} \label{eq:fosk}
 \begin{aligned}
  \dt{j} &= -g(\xi) j, \\
  \dt{\xi} &= 1, \\
  \varepsilon \dt{\rho} &= \rho(1-\rho)-j.
 \end{aligned}
\end{equation}
In this framework, the boundary conditions \eqref{eq:boundary conditions reduced rho} become
\begin{equation}
\begin{aligned}
    j &= \alpha \left(1 - \rho\right)
 & \text{ at } \xi &= 0,
 \\
    j &= \beta \rho
 & \text{ at } \xi &= 1.
\end{aligned}
\label{eq:bc_sf}
\end{equation}
System \eqref{eq:fosk} is a slow-fast system in slow 
form (\ref{eq:slow}), where $\rho$ is the only fast variable (since its dynamics evolve proportionally to $\frac{1}{\varepsilon} \gg 1$), while $j$ and $\xi$ are the slow ones  \cite{Fenichel_1979,Jones_1995,Kuehn_2015}.
As usual in GSPT, we rescale $x$ to the fast variable $\chi = \frac{x}{\varepsilon}$, and using the notation $'=\frac{d}{d \chi}$, we can rewrite System \eqref{eq:fosk} as
\begin{equation} \label{eq:fosk_fast}
 \begin{aligned}
  j' &= -\varepsilon g(\xi) j, \\
  \xi' &= \varepsilon, \\
  \rho' &= \rho(1-\rho)-j,
 \end{aligned}
\end{equation}
which describes the evolution of \eqref{eq:fosk} on the fast scale. Note that for $\varepsilon > 0$, Systems \eqref{eq:fosk} and \eqref{eq:fosk_fast} are equivalent. Using GSPT methods to investigate fast-slow systems is particularly advantageous, as it allows us to obtain detailed information about the structure of the solutions to the original problem \eqref{eq:reduced equation rho}-\eqref{eq:boundary conditions reduced rho} by separately analysing the singular limit $\varepsilon \to 0$ on the slow scale \eqref{eq:fosk} and on the fast scale \eqref{eq:fosk_fast}, and then subsequently matching the results. Letting $\varepsilon \to 0$ in Equations \eqref{eq:fosk} and \eqref{eq:fosk_fast} leads to two limiting subproblems -- i.e.~the \emph{reduced} problem and the \emph{layer} problem, respectively -- which are simpler to analyse. The layer problem ($\varepsilon=0$ in \eqref{eq:fosk_fast}) is given by
\begin{equation} \label{eq:laypb_k}
 \begin{aligned}
  j' &= 0, \\
  \xi' &= 0, \\
  \rho' &= \rho(1-\rho)-j, 
 \end{aligned}
\end{equation}
and describes the dynamics of the fast variable $\rho$ for fixed $j$ and $\xi$ values. The manifold of its equilibria is known as the \emph{critical manifold}
 \begin{equation} \label{eq:crit_man}
 \mathcal{C}_0 := \left\{ (j,\xi,\rho)~:~j=\rho(1-\rho) \right\}.
 \end{equation}
 It consists of the union of two submanifolds $\mathcal{C}_0^a$ ($\rho > \frac12$) and $\mathcal{C}_0^r$ ($\rho < \frac12$) -- which are attracting and repelling, respectively -- and a line of fold points
 \begin{equation} \label{eq:foldL}
  S := \left\{ (j,\xi,\rho)~:~j=\frac14,~\rho=\frac12 \right\},
 \end{equation}
as shown in Figure \ref{fig:C0_ff}.
 \begin{figure}[H]
    \centering
    \def\svgwidth{.4\textwidth}
    %% Creator: Inkscape inkscape 0.92.4, www.inkscape.org
%% PDF/EPS/PS + LaTeX output extension by Johan Engelen, 2010
%% Accompanies image file '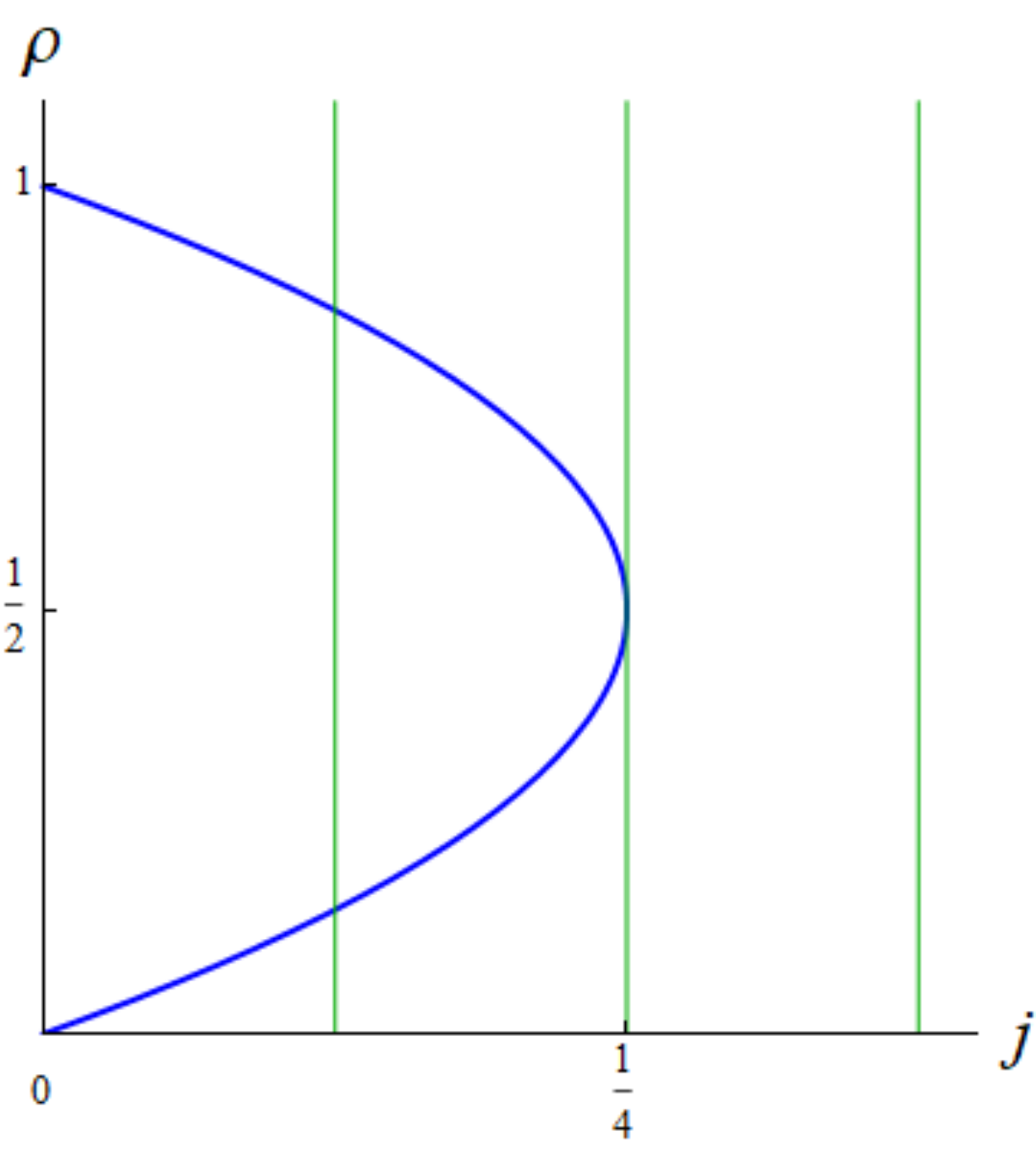' (pdf, eps, ps)
%%
%% To include the image in your LaTeX document, write
%%   \input{<filename>.pdf_tex}
%%  instead of
%%   \includegraphics{<filename>.pdf}
%% To scale the image, write
%%   \def\svgwidth{<desired width>}
%%   \input{<filename>.pdf_tex}
%%  instead of
%%   \includegraphics[width=<desired width>]{<filename>.pdf}
%%
%% Images with a different path to the parent latex file can
%% be accessed with the `import' package (which may need to be
%% installed) using
%%   \usepackage{import}
%% in the preamble, and then including the image with
%%   \import{<path to file>}{<filename>.pdf_tex}
%% Alternatively, one can specify
%%   \graphicspath{{<path to file>/}}
%% 
%% For more information, please see info/svg-inkscape on CTAN:
%%   http://tug.ctan.org/tex-archive/info/svg-inkscape
%%
\begingroup%
  \makeatletter%
  \providecommand\color[2][]{%
    \errmessage{(Inkscape) Color is used for the text in Inkscape, but the package 'color.sty' is not loaded}%
    \renewcommand\color[2][]{}%
  }%
  \providecommand\transparent[1]{%
    \errmessage{(Inkscape) Transparency is used (non-zero) for the text in Inkscape, but the package 'transparent.sty' is not loaded}%
    \renewcommand\transparent[1]{}%
  }%
  \providecommand\rotatebox[2]{#2}%
  \newcommand*\fsize{\dimexpr\f@size pt\relax}%
  \newcommand*\lineheight[1]{\fontsize{\fsize}{#1\fsize}\selectfont}%
  \ifx\svgwidth\undefined%
    \setlength{\unitlength}{360bp}%
    \ifx\svgscale\undefined%
      \relax%
    \else%
      \setlength{\unitlength}{\unitlength * \real{\svgscale}}%
    \fi%
  \else%
    \setlength{\unitlength}{\svgwidth}%
  \fi%
  \global\let\svgwidth\undefined%
  \global\let\svgscale\undefined%
  \makeatother%
  \begin{picture}(1,1.11111107)%
    \lineheight{1}%
    \setlength\tabcolsep{0pt}%
    \put(0,0){\includegraphics[width=\unitlength,page=1]{C0_b_ff_2_new_2.pdf}}%
    \put(0.50235224,0.86817624){\color[rgb]{0,0,0}\makebox(0,0)[lt]{\lineheight{1.25}\smash{\begin{tabular}[t]{l}$\mathcal{C}_0^a$\end{tabular}}}}%
    \put(0.50084279,0.16135979){\color[rgb]{0,0,0}\makebox(0,0)[lt]{\lineheight{1.25}\smash{\begin{tabular}[t]{l}$\mathcal{C}_0^r$\end{tabular}}}}%
    \put(0,0){\includegraphics[width=\unitlength,page=2]{C0_b_ff_2_new_2.pdf}}%
    \put(0.63921324,0.51966648){\color[rgb]{0,0,0}\makebox(0,0)[lt]{\lineheight{1.25}\smash{\begin{tabular}[t]{l}$S$\end{tabular}}}}%
    \put(0,0){\includegraphics[width=\unitlength,page=3]{C0_b_ff_2_new_2.pdf}}%
  \end{picture}%
\endgroup%

    \caption{Fast dynamics in $(j,\rho)$-space for each fixed value of $\xi$. The blue curve represents $\mathcal{C}_0$, divided in two branches $\mathcal{C}_0^a$ (attracting) and $\mathcal{C}_0^a$ (repelling). The green lines indicate orbits of the layer problem \eqref{eq:laypb_k}, while the blue dot represents the line of fold points $S$ \eqref{eq:foldL}.}
    \label{fig:C0_ff}
 \end{figure}
The reduced problem reads
\begin{subequations} \label{eq:k_redpb}
 \begin{align}
  \dt{j} &= -g(\xi) j, \label{eq:kj_const} \\
  \dt{\xi} &= 1.
 \end{align}
\end{subequations}
This system describes the dynamics of the slow variables $j$ and $\xi$ along $\mathcal{C}_0$. Equation \eqref{eq:crit_man} implies  that $\dt j = (1-2 \rho) \dt \rho$, hence it is advantageous to rewrite \eqref{eq:k_redpb} in terms of the variables $\xi$ and $\rho$ (see Figure \ref{fig:slow_flow})
\begin{equation} \label{eq:k_redpb_xrho}
\begin{aligned}
    \dt \rho &= -g(\xi)\frac{\rho (1-\rho)}{1-2 \rho},\\
    \dt \xi &= 1.
\end{aligned}
\end{equation}
System \eqref{eq:k_redpb_xrho} is singular on the fold line $S$, i.e.~for $\rho=\frac12$. In order to desingularise this system, we multiply the right hand-sides by $1-2\rho$, which is positive for $\rho < \frac12$. For $\rho > \frac12$ we have to reverse the evolution direction. This gives the following system, which has the same orbits as \eqref{eq:k_redpb_xrho} away from $S$:
\begin{equation} \label{eq:k_redpb_rholg12}
 \begin{aligned}
  \dt{\rho} &= -g(\xi) \rho(1-\rho), \\
  \dt{\xi} &= 1-2\rho,
 \end{aligned}
 \qquad \text{if } \rho<\frac12,
 \qquad
 \begin{aligned}
  \dt{\rho} &= g(\xi) \rho(1-\rho), \\
  \dt{\xi} &= 2\rho-1,
 \end{aligned}
 \qquad \hspace{.2cm}\text{if } \rho>\frac12.
\end{equation}
We observe that the reduced flow \eqref{eq:k_redpb_xrho} satisfies $\frac{d \rho}{d \xi}<0$ for $\rho>\frac12$ and $\frac{d \rho}{d \xi}>0$ for $\rho < \frac12$, i.e.~$\rho$ is decreasing on $\mathcal{C}_0^a$ and increasing on $\mathcal{C}_0^r$. Note that \eqref{eq:k_redpb_rholg12} is symmetric with respect to reflection on the fold line $\rho=\frac12$.
 \begin{figure}[H]
    \centering
    \includegraphics[scale=.5]{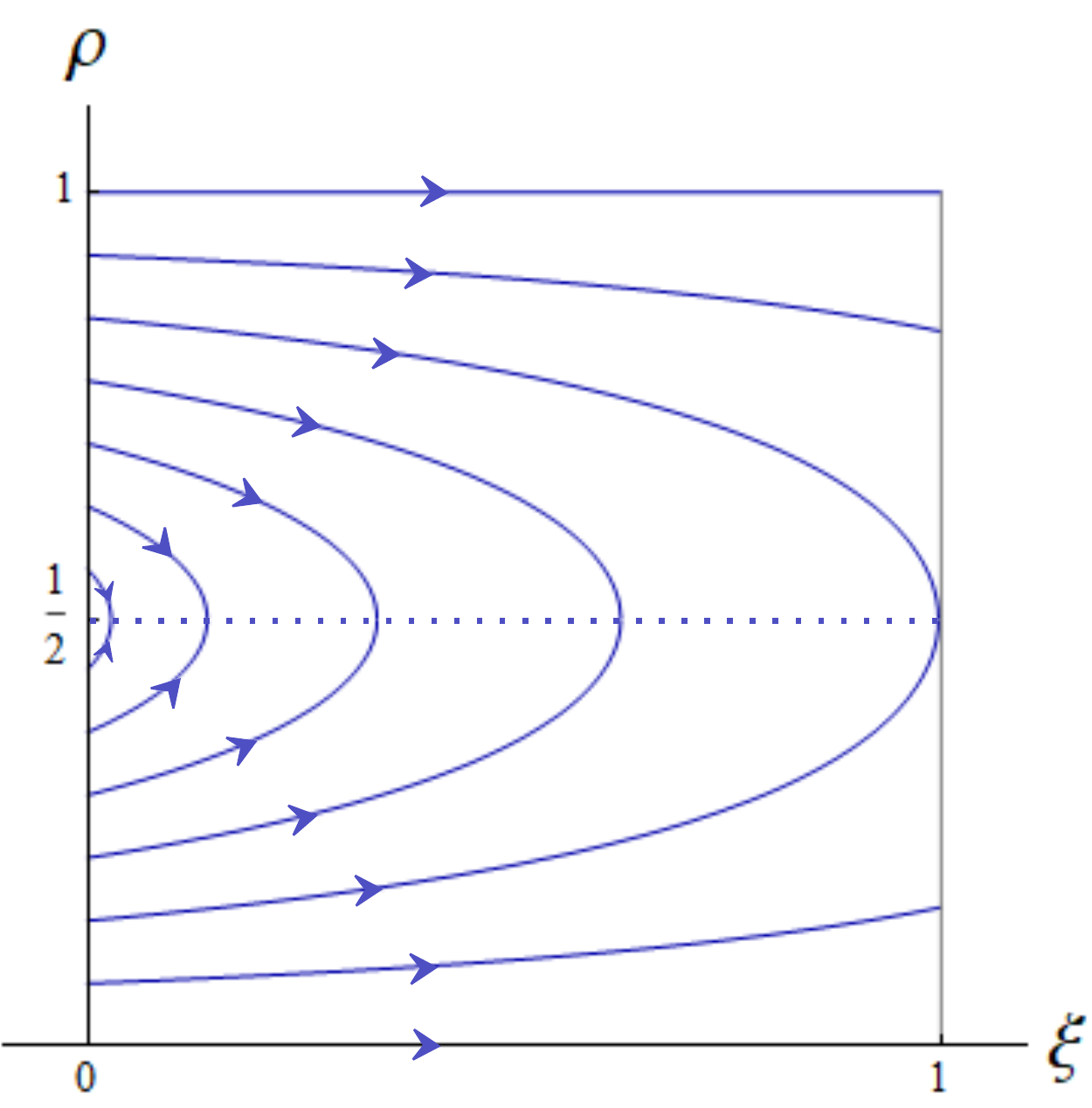} %sf_sp_a
    \caption{Schematic representation of the reduced flow \eqref{eq:k_redpb_xrho} on $\mathcal{C}_0$ (blue lines) for $k(x)=1-\frac{x}{2}$.
    The dashed line indicates the line of fold points $S$.}
    \label{fig:slow_flow}
 \end{figure}
We start by constructing six singular orbits, denoted by $\Gamma^i$, $i=1,\dots,6$, in the following. These singular orbits are suitable combinations of orbit segments of the layer problem \eqref{eq:laypb_k} and orbits of the reduced flow \eqref{eq:k_redpb} which satisfy the boundary conditions \eqref{eq:boundary conditions reduced rho}. Their construction is based on a shooting strategy: we evolve the manifold of boundary conditions at $\xi=0$ forward and check whether it intersects the manifold of boundary conditions at $\xi=1$. This constructive procedure allows us to also identify the initial and final values of $\rho$ (which we call $\rho_0$ and $\rho_1$ in the following) for $\varepsilon=0$. These singular orbits will later be used in the construction of genuine solutions to \eqref{eq:reduced equation rho}-\eqref{eq:boundary conditions reduced rho} for $0 < \varepsilon \ll 1$ (see Theorem \ref{thm:k}). For a visualisation of this strategy in a specific case, we refer to Figure \ref{fig:singsol_1} - the details necessary to fully understand it will be given in the following. \\
In the dynamical systems framework, boundary conditions \eqref{eq:boundary conditions reduced rho} correspond to two lines in the $(j,\xi,\rho)$-space, satisfying $j=\alpha(1-\rho)$ at $\xi=0$ and $j=\beta \rho$ at $\xi=1$, respectively. However, due to the fast-slow structure, the set of admissible boundary conditions is restricted to  (see Figures \ref{fig:L_comb}-\ref{fig:R_comb})
\begin{subequations} \label{eq:LR}
\begin{align}
 \mathcal{L} &:= \left\{ \left( \alpha(1-s), \, 0, \, s \right) \ : \ \rho_\alpha \leq s \leq 1 \right\}, \\
 \mathcal{R} &:= \left\{ \left( \beta t, \, 1, \, t \right) \ : \ 0 \leq t \leq \rho_\beta \right\}.
\end{align}
\end{subequations}
Here
\begin{equation} \label{eq:rho_alpha}
 \rho_\alpha= \begin{cases} \alpha &\quad \text{if } \alpha \leq \frac12, \\ 1-\frac{1}{4 \alpha} &\quad \text{if } \alpha \geq \frac12, \end{cases}
\end{equation}
and
\begin{equation} \label{eq:rho_beta}
 \rho_\beta= \begin{cases} 1-\beta &\quad \text{if } \beta \leq \frac12, \\ \frac{1}{4\beta} &\quad \text{if } \beta \geq \frac12. \end{cases}
\end{equation}
The lower and upper bounds $\rho_\alpha$ and $\rho_\beta$ for the density $\rho$ are caused by the fast-slow structure of the flow: if we would consider a starting point $(\alpha(1-\rho), \, 0, \, \rho)$ with $0 \leq \rho < \rho_\alpha$, the orbit would be immediately repelled to infinity from $\mathcal{C}_0$, hence connecting to the boundary conditions at $\xi=1$ is impossible. Analogously, points satisfying $(\beta \rho, \, 1, \, \rho)$ with $\rho_\beta < \rho \leq 1$ cannot be endpoints of the singular orbits, since they are repelling for the layer problem.\\
Thus, the initial and final points of the singular orbits -- $p_0$ and $p_1$, respectively -- must satisfy
\begin{equation} \label{eq:p_0f}
 p_0 \in \mathcal{L}, \textrm{ and } p_1 \in \mathcal{R}.
\end{equation}
The manifold $\mathcal{L}$ intersects with $\mathcal{C}_0$ at $(0,0,1)$ and
 \begin{equation} \label{eq:p_l}
 l=(\alpha(1-\alpha),0,\alpha),
 \end{equation}
while $\mathcal{R}$ intersects with $\mathcal{C}_0$ at $(0,1,0)$ and
 \begin{equation} \label{eq:p_r}
 r=(\beta(1-\beta),1,1-\beta).
 \end{equation}
For $\varepsilon=0$, the variable $\xi$ evolves only on $\mathcal{C}_0$ according to the reduced flow \eqref{eq:k_redpb_xrho}. Therefore, in order for the singular solution to evolve from $\xi=0$ to $\xi=1$, we must connect $\mathcal{L}$ and $\mathcal{R}$ to $\mathcal{C}_0$. The points $l$ and $r$ already belong to $\mathcal{C}_0$. Other points on $\mathcal{L}$ and $\mathcal{R}$ can reach $\mathcal{C}_0$ using the layer problem \eqref{eq:laypb_k}. Tracking the evolution of $\mathcal{L}$ by means of the layer problem at $\xi=0$ until it reaches $\mathcal{C}_0$, and analogously the evolution of $\mathcal{R}$ backwards until the layer problem at $\xi=1$ intersects $\mathcal{C}_0$, yields two sets (shown in Figures \ref{fig:L_comb}-\ref{fig:R_comb}):
\begin{subequations} \label{eq:M010_i}
\begin{align}
 \mathcal{L}^+ &:= \left\{ \left( \alpha(1-s), \, 0, \, \rho(0,s) \right) \ : \ \rho_\alpha \leq s \leq 1 \right\},
 \label{eq:L+} \\
 \mathcal{R}^- &:= \left\{ \left( \beta t, \, 1, \, \frac12\left(1-\sqrt{1-4\beta t}\right) \right) \ : \ 0 \leq t \leq \rho_\beta \right\}.
 \label{eq:R-}
\end{align}
\end{subequations}
In the following, we use the symbol $\rho(0,s)$ to indicate the $\rho$-value reached by the point $(\alpha(1-s), \, 0, \, s)$ after its transition from $\mathcal{L}$ to $\mathcal{C}_0$ by means of the layer problem. If the solution $(\xi,\rho)$ of the reduced flow \eqref{eq:k_redpb_rholg12} starting at $(0,\rho(0,s))$ reaches $\xi=1$, we denote its value of $\rho$ at $\xi=1$ by $\rho(1,s)$. 
When $\alpha < \frac12$, the reduced flow can either start on $\mathcal{L}^+$ or at $l$, while for $\alpha > \frac12$ it must start on $\mathcal{L}^+$. Analogously, when $\beta<\frac12$, the reduced flow can either end on $\mathcal{R}^-$ or at $r$, while for $\beta > \frac12$ it must end on $\mathcal{R}^-$.
 \begin{figure}[H] 
 \centering
 \begin{minipage}{.4\textwidth}
  \centering
  \def\svgwidth{.8\textwidth}
  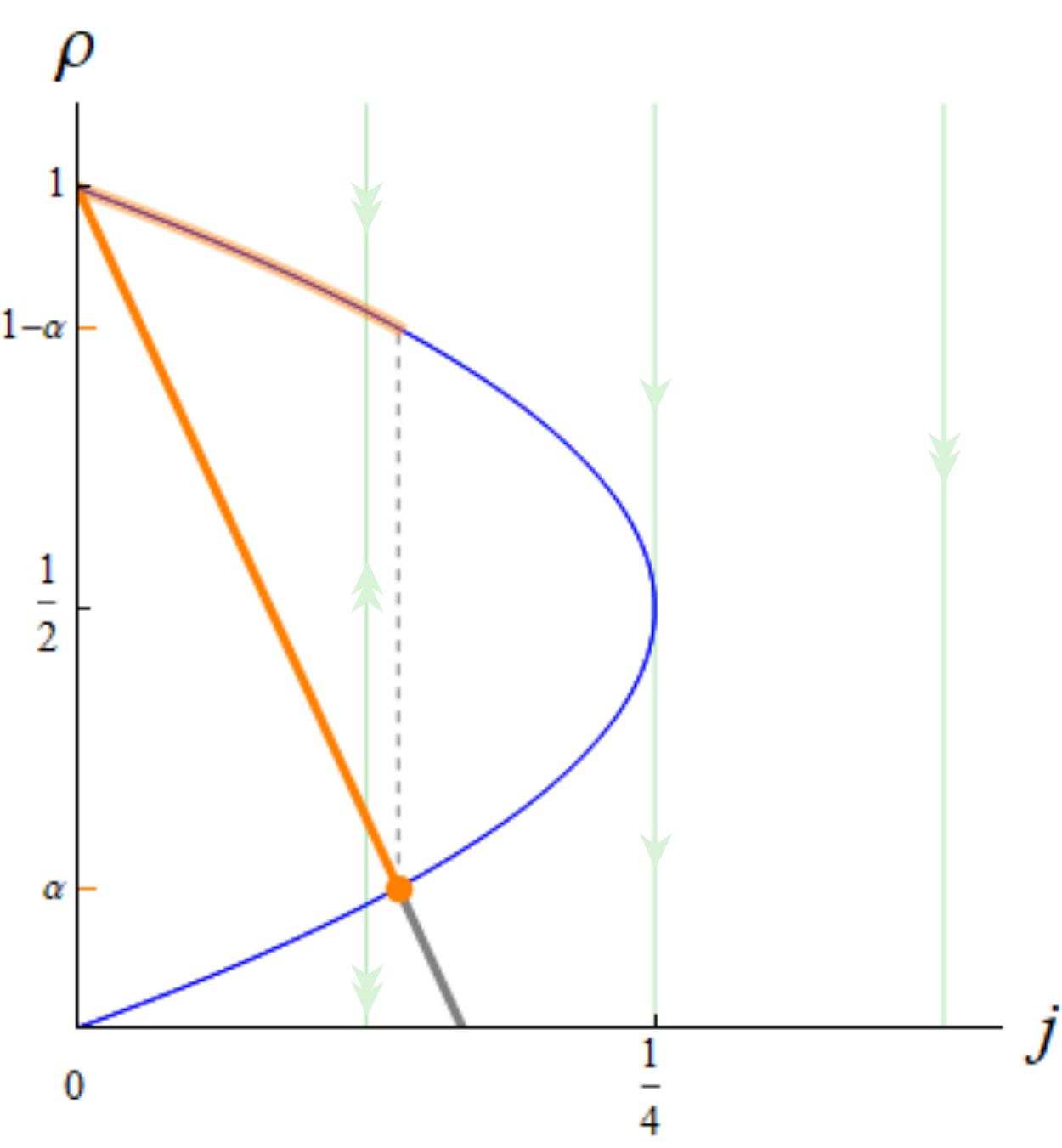\\
  (a)
 \end{minipage}
 \begin{minipage}{.4\textwidth}
  \centering
  \def\svgwidth{.8\textwidth}
  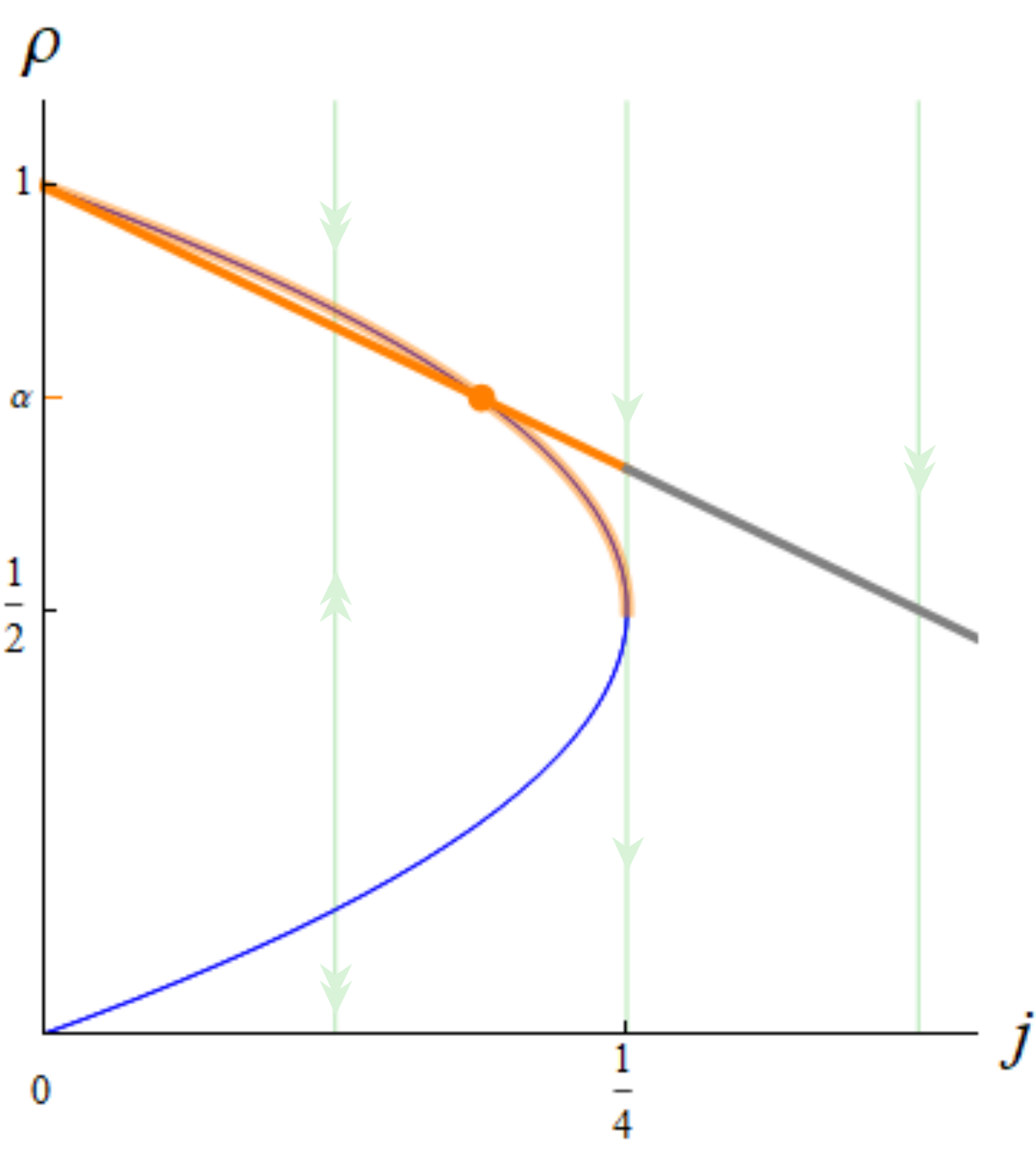\\
  (b)
 \end{minipage}
 \caption{Schematic representation of $\mathcal{L}$ (orange line) and $\mathcal{L}^+$ (orange curve) for (a) $0 <\alpha < \frac12$ and (b) $\frac12 <\alpha < 1$. The orange dot corresponds to $l$, the blue curve represents $\mathcal{C}_0$, and the green lines correspond to the orbits of the layer problem.}
 \label{fig:L_comb}
 \end{figure}
 
 \begin{figure}[H] 
 \centering
 \begin{minipage}{.4\textwidth}
  \centering
  \def\svgwidth{.8\textwidth}
  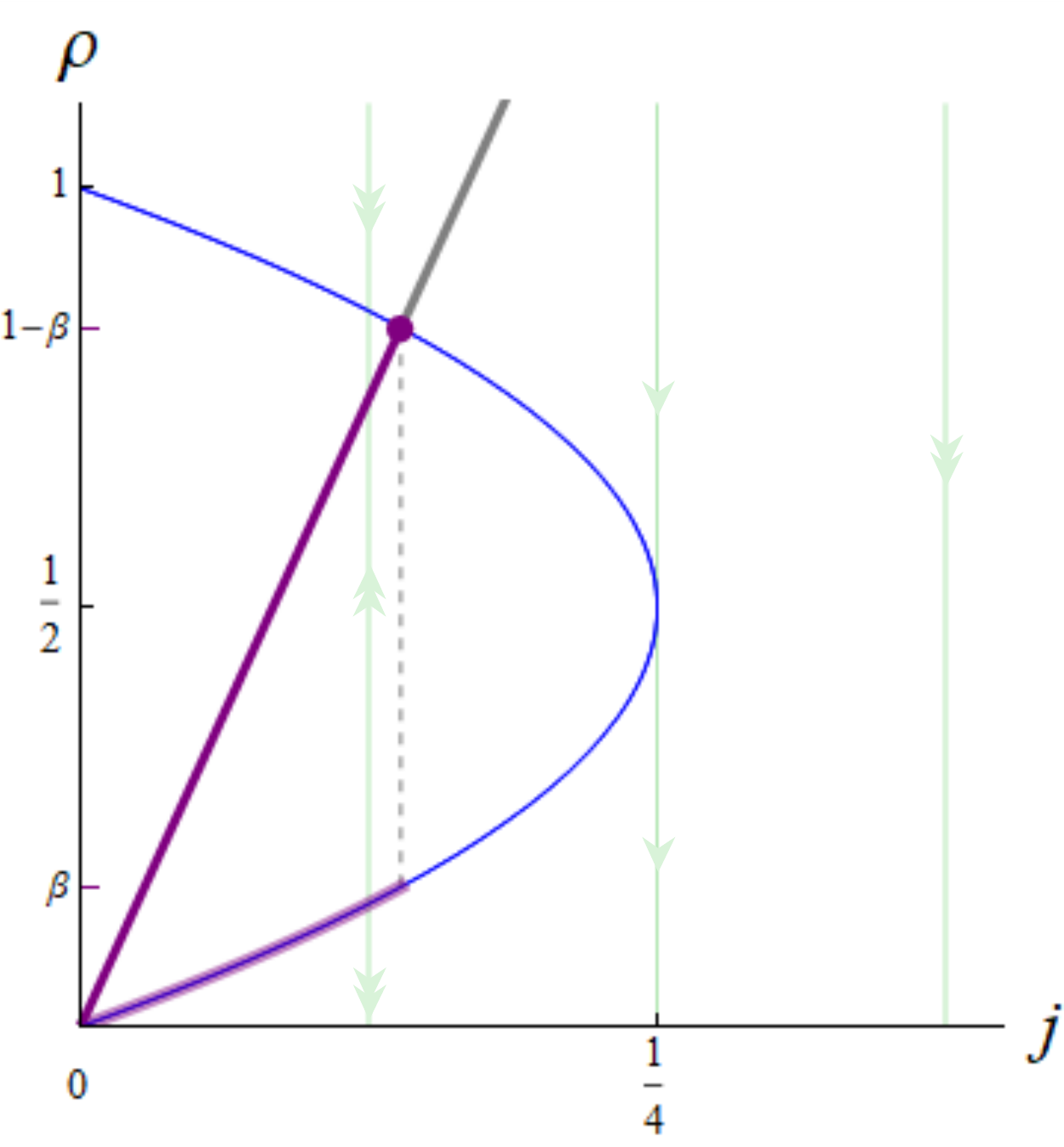\\
  (a)
 \end{minipage}
 \begin{minipage}{.4\textwidth}
  \centering
  \def\svgwidth{.8\textwidth}
  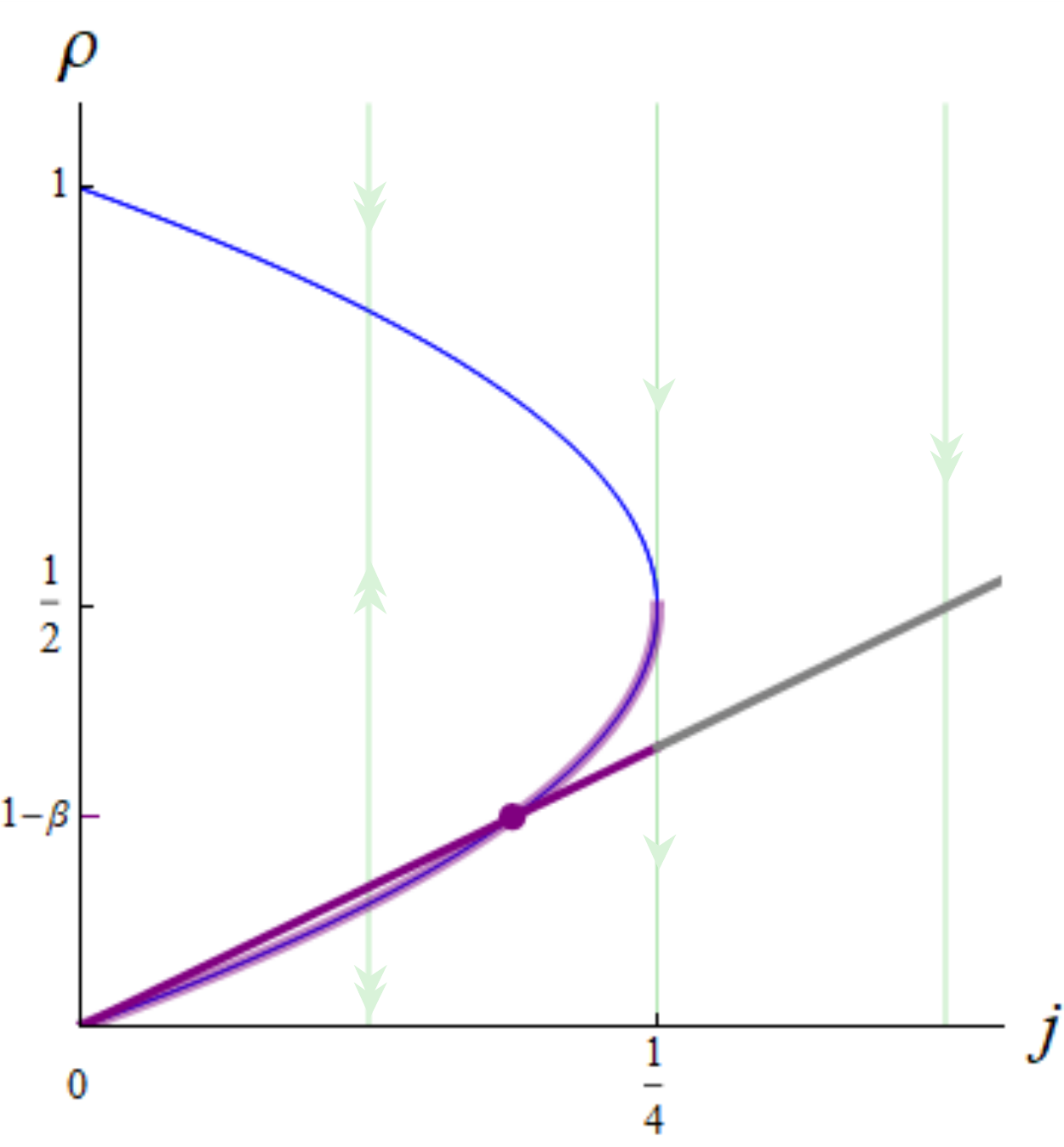\\
  (b)
 \end{minipage}
 \caption{Schematic representation of $\mathcal{R}$ (purple line) and $\mathcal{R}^-$ (purple curve) for (a) $0 < \beta < \frac12$ and (b) $\frac12 <\beta < 1$. The purple dot corresponds to $r$, the blue curve represents $\mathcal{C}_0$, and the green lines correspond to the orbits of the layer problem.}
 \label{fig:R_comb}
 \end{figure}

Based on this geometric interpretation of the boundary conditions, we proceed with the construction of the singular orbits by connecting $\mathcal{L}^+ \cup l$ and  $\mathcal{R}^- \cup r$ by means of the reduced flow \eqref{eq:k_redpb_xrho} on $\mathcal{C}_0$. In doing so, we first let $\mathcal{L}^+$ in \eqref{eq:L+} flow by means of the reduced flow until $\xi=1$: we call the corresponding set $\mathcal{L}_1^+$.
\begin{equation} \label{eq:L+_1}
 \mathcal{L}_1^+ := \left\{ \left( \rho(1,s)(1-\rho(1,s)),\, 1,\, \rho(1,s)  \right) \ : \ \rho_\alpha \leq s \leq 1 \right\}.
\end{equation}
If $\alpha \geq \frac12$ then $l \in \mathcal{L}^+$, and the evolution of $l$ by means of the reduced flow is already included in $\mathcal{L}_1^+$. If $\alpha < \frac12$ then $l \notin \mathcal{L}^+$, and therefore the corresponding point at $\xi=1$ must be defined separately as
\begin{equation} \label{eq:l_1}
 l_1 := \left\{ (\rho(1,\alpha)(1-\rho(1,\alpha)), \, 1,\, \rho(1,\alpha)) \right\}.
\end{equation}
Due to the structure of the reduced flow, this point does not exist for all values of $\alpha < \frac12$ (see Remark \ref{rem:l1}).\\
A singular orbit is then given by matching the slow and fast pieces obtained by investigating the reduced and layer problems, respectively. More specifically, a singular orbit exists if and only if the intersection between the sets $\mathcal{L}_1^+ \cup l_1$ and $\mathcal{R}^- \cup r$ is non-empty, and it is unique if this intersection consists of one point.\\
Our analysis of the existence and structure of singular orbits is based on six special orbits $S_1$, $S_2$, $S_3$, $\tilde{S}_1$, $\tilde{S}_2$, $\tilde{S}_3$ of the reduced flow, where $S_2$, $\tilde{S}_2$ depend on $\alpha$ and $S_3$, $\tilde{S}_3$ on $\beta$ (see Figure \ref{fig:sf_sp}):
\begin{itemize}
 \item The orbit $S_1$ is defined as the one starting at $\xi=0$ at a $\rho$-value below $\frac12$ and ending at the fold line $S$, that is $\rho=\frac12$ at $\xi=1$. We refer to the corresponding initial value at $\xi=0$ by $\rho_f$.
 \item The orbit $S_2$ is defined as the one starting at $\rho=\alpha$ at $\xi=0$ with $\alpha \in (0,1)$. For $\alpha \leq \rho_f$ or $\alpha \geq  1-\rho_f$, the corresponding final value of $\rho$ at $\xi=1$ is denoted by $\rho^\ast(\alpha)$. For $\rho_f \leq \alpha \leq 1-\rho_f$, $S_2$ ends on the fold line $S$.
 \item The orbit $S_3$ is defined as the one ending at $\rho=1-\beta$ at $\xi=1$ with $\beta \in (0,1)$. For $\beta \leq \rho^\ast(\alpha)$ or $\beta \geq 1-\rho^\ast(\alpha)$, the corresponding initial value of $\rho$ at $\xi=0$ is denoted by $\rho_\ast(\beta)$. When $\beta = \frac12$, we define $S_3 = \tilde{S}_1$.
 \item For $i=1,2,3$, we define $\tilde{S}_i$ as the reflection of the orbit $S_i$ with respect to $\rho=\frac12$.
\end{itemize}
%\textcolor{blue}{If $\alpha=\rho_f$, then $S_2=S_1$ and $\tilde{S}_2=\tilde{S}_1$. Moreover, if $\beta=\rho^\ast(\alpha)$, then $S_3=\tilde{S}_2$ and $\tilde{S}_3=S_2$.\\
%\noindent}
Depending on the values of $\alpha$ and $\beta$, one of these orbits corresponds to the slow part of the singular orbits we will construct.\\
Changing $\alpha$ and $\beta$ influences the orbits $S_2$, $\tilde{S}_2$ and $S_3$, $\tilde{S}_3$. We will show in the following that the $\alpha$, $\beta$ dependent mutual position of these orbits determines the type of singular solution of the boundary value problem.
 \begin{figure}[!ht]
  \centering
  	\begin{overpic}[scale=0.7]{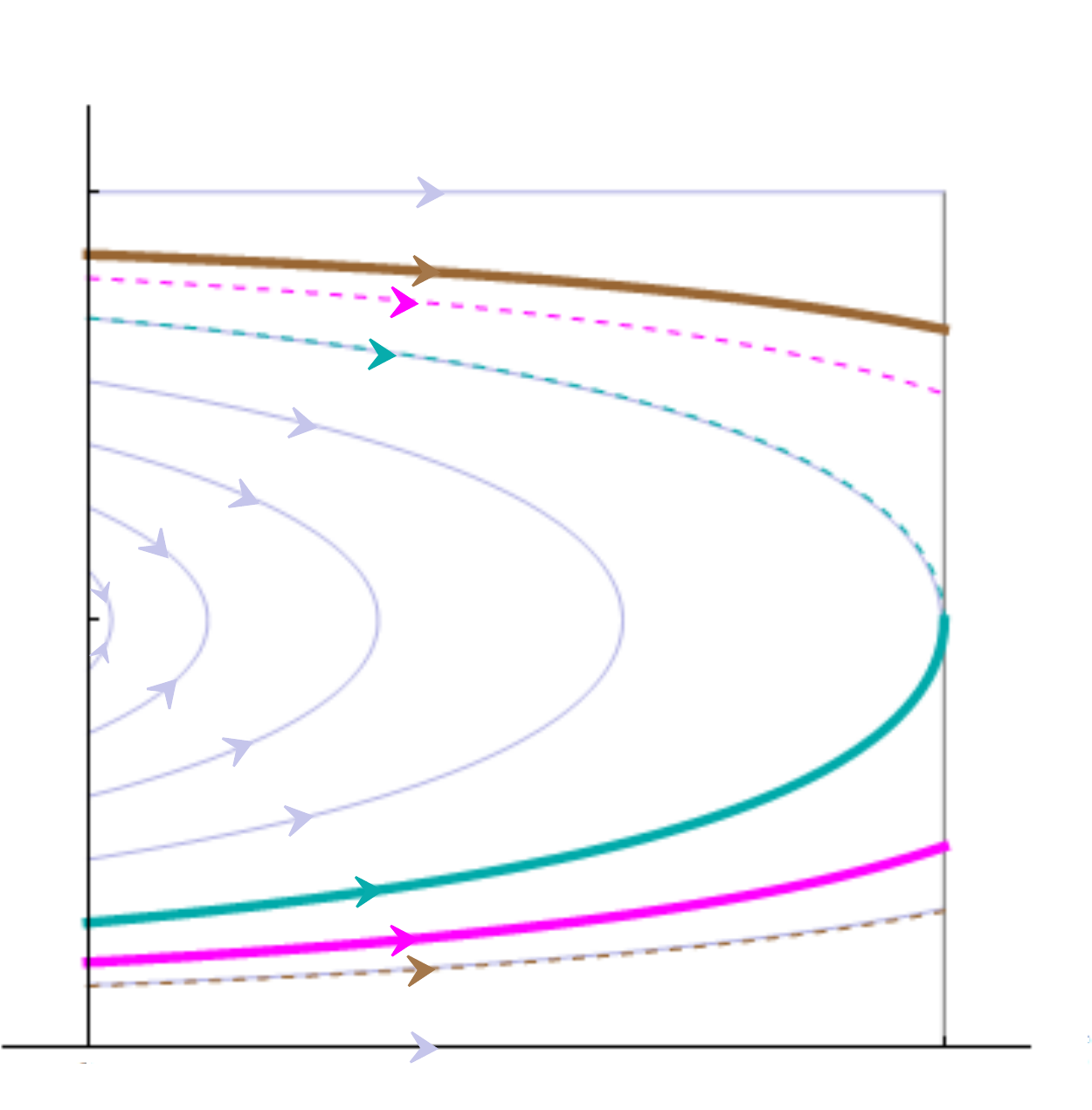}
  	\put(7,0){$0$}
  	\put(84,0){$1$}
  	\put(95,0){\Large$\xi$}
	\put(-9,8){\footnotesize $1-\rho_\ast(\beta)$}
    \put(4,12.5){\footnotesize $\alpha$}
    \put(3,17){\footnotesize $\rho_f$}
    \put(4,43){$\frac12$}
    \put(-4,70){\footnotesize $1-\rho_f$}
    \put(-3,74){\footnotesize $1-\alpha$}
    \put(-2,78){\footnotesize $\rho_\ast(\beta)$}
    \put(4.5,82){$1$}
    \put(3,90){\Large$\rho$}
    \put(87,17){\footnotesize $\beta$}
    \put(87,24){\footnotesize $\rho^\ast(\alpha)$}
    \put(87,63){\footnotesize $1-\rho^\ast(\alpha)$}
    \put(87,69.5){\footnotesize $1-\beta$}
    \put(78,39){$S_1$}
    \put(68,54){$\tilde{S}_1$}
    \put(78,25){$S_2$}
    \put(68,63){$\tilde{S}_2$}
    \put(78,74){$S_3$}
    \put(68,10){$\tilde{S}_3$}
	\end{overpic}
  \caption{Schematic illustration of the special orbits $S_1$, $S_2$, $S_3$, $\tilde{S}_1$, $\tilde{S}_2$, $\tilde{S}_3$ in $(\xi,\rho)$-space in the case $1-\rho_\ast(\beta)<\alpha<\rho_f$ and $\beta < \frac12$. The orbit $S_1$ (solid cyan curve) connects $(0,\rho_f)$ and $(1,\frac12)$, the orbit $S_2$ (solid magenta curve) connects $(0,\alpha)$ and $(1,\rho^\ast(\alpha))$, and the orbit $S_3$ (solid brown curve) connects $(0,\rho_\ast(\beta))$ and $(1,1-\beta)$ with $\alpha,\beta < \frac12$ as in \eqref{eq:sp_rho}. The dashed curves represent the orbits $\tilde{S}_1$ (cyan), $\tilde{S}_2$ (magenta), $\tilde{S}_3$ (brown), which are symmetric to the corresponding solid ones $S_1$, $S_2$, $S_3$ with respect to $\rho=\frac12$. Note that for $\alpha > \frac12$ the orbit $S_2$ lies in $\mathcal{C}_0^a$, and for $\beta > \frac12$ the orbit $S_3$ lies in $\mathcal{C}_0^r$.}
  \label{fig:sf_sp}
 \end{figure}
The respective values of $\rho_f$, $\rho^\ast(\alpha)$ and  $\rho_\ast(\beta)$
can be computed explicitly:
\begin{subequations} \label{eq:sp_rho}
\begin{align}
 \rho_f &:= \frac12 \left(1-\sqrt{1-\frac{k(1)}{k(0)}} \right), \\
 \rho^\ast(\alpha) &:= \frac12 \left( 1 - \sqrt{1-4\alpha(1-\alpha) \frac{k(0)}{k(1)}} \right), \\
 \rho_\ast(\beta) &:= \frac12 \left( 1 + \sqrt{1-4\beta(1-\beta) \frac{k(1)}{k(0)}} \right).
\end{align}
\end{subequations}
Note that $\alpha=1-\rho_\ast(\beta)$ is equivalent to $\beta=\rho^\ast(\alpha)$.
\begin{remark} \label{rem:l1}
 The point $l_1$ defined in \eqref{eq:l_1} exists if and only if $\alpha \leq \rho_f$.
\end{remark}
\noindent
Based on this, we divide the $(\alpha,\beta)$-parameter space into six regions $\mathcal{G}_i$, $i=1,\dots,6$ defined via the following curves $\gamma_{ij}$ (here the indices refer to the adjacent regions):
\begin{subequations}\label{eq:g_bound}
 \begin{align}
  \gamma_{12} &:= \left\{ (\alpha,\beta) \ : \;  0< \alpha  < \rho_f, \, \beta = 1-\rho^\ast(\alpha) \right\},\\
  \gamma_{15} &:= \left\{ (\alpha,\beta) \ : \; 0< \alpha < \rho_f, \, \beta = \rho^\ast(\alpha) \right\},\\
  \gamma_{23} &:= \left\{ (\alpha,\beta) \ : \; \alpha = \rho_f, \, \frac12 < \beta < 1 \right\},\\
  \gamma_{34} &:= \left\{ (\alpha,\beta) \ : \; \alpha = 1-\rho_f, \, \frac12 < \beta < 1 \right\},\\
  \gamma_{35} &:= \left\{ (\alpha,\beta) \ : \; \rho_f < \alpha < 1-\rho_f, \, \beta = \frac12 \right\},\\
  \gamma_{46} &:= \left\{ (\alpha,\beta) \ : \; 1-\rho_f < \alpha < 1, \, \beta = \frac12 \right\},\\
  \gamma_{56} &:= \left\{ (\alpha,\beta) \ : \; \alpha=\rho_\ast(\beta), \, 0 < \beta < \frac12 \right\}.
  \end{align}
 \end{subequations}
The above curves correspond to situations where some of the orbits $S_i$, $\tilde{S}_i$, $i=1,2,3$ defined above coincide. In particular:
\begin{itemize}
    \item for $(\alpha,\beta) \in \gamma_{12}$, we have $S_2=S_3$ (lying in $\mathcal{C}_0^r$);
    %the orbits $S_2$ and $S_3$ coincide (and lie in $\mathcal{C}_0^r$);
    \item for $(\alpha,\beta) \in \gamma_{15}$, we have $S_2=\tilde{S}_3$;
    %the orbits $S_2$ and $\tilde{S}_3$ coincide; %as well $\tilde{S}_2$ and $S_3$ coincide; %(see Remark \ref{rem:fmam});
    \item for $(\alpha,\beta) \in \gamma_{23}$, we have $S_1=S_2$;
    %the orbits $S_1$ and $S_2$ coincide; %as well $\tilde{S}_1$ and $\tilde{S}_2$ coincide; %(see Remark \ref{rem:fmam});
    \item for $(\alpha,\beta) \in \gamma_{34}$, we have $\tilde{S}_1=S_2$;
    %the orbits $\tilde{S}_1$ and $S_2$ coincide;
    \item for $(\alpha,\beta) \in \gamma_{35} \cup \gamma_{46}$, we have $\tilde{S}_1=S_3$;
    %the orbits $\tilde{S}_1$ and $S_3$ coincide;
    \item for $(\alpha,\beta) \in \gamma_{56}$, we have $S_2=S_3$ (lying in $\mathcal{C}_0^a$).
    %the orbits $S_2$ and $S_3$ coincide (and lie in $\mathcal{C}_0^a$).
\end{itemize}
\begin{remark}
Whenever two orbits coincide, their symmetric reflections with respect to $\rho=\frac12$ coincide as well.
\end{remark}
The seven curves in \eqref{eq:g_bound} split $(0,1)^2$ into $6$ regions $\mathcal{G}_i$, $i=1,\dots,6$ (shown in Figure \ref{fig:bd_sing}):
\begin{subequations} \label{eq:regions}
\begin{align}
    \mathcal{G}_1 &:= \left\{ (\alpha,\beta) \ : \; 0 < \alpha < \rho_f, \, \rho^\ast(\alpha) < \beta < 1-\rho^\ast(\alpha) \right\} \\
    % when $0 < \beta < \frac12 it must be \alpha < f^-, but for \frac12 \leq \beta < 1 it is \alpha \leq f^-$
    \mathcal{G}_2 &:= \left\{ (\alpha,\beta) \ : \; 0 < \alpha < \rho_f, \, 1-\rho^\ast(\alpha) < \beta < 1 \right\}, \\
    \mathcal{G}_3 &:= \left\{ (\alpha,\beta) \ : \; \rho_f < \alpha < 1-\rho_f, \, \frac12 < \beta < 1 \right\}, \\
    \mathcal{G}_4 &:= \left\{ (\alpha,\beta) \ : \; 1-\rho_f < \alpha < 1, \, \frac12 < \beta < 1 \right\}, \\
    \mathcal{G}_5 &:= \left\{ (\alpha,\beta) \ : \; 1-\rho_\ast(\beta) < \alpha < \rho_\ast(\beta), \, 0 < \beta < \frac12 \right\}, \\
    \mathcal{G}_6 &:= \left\{ (\alpha,\beta) \ : \; \rho_\ast(\beta) < \alpha < 1, \, 0 < \beta < \frac12 \right\}.
\end{align}
\end{subequations}

\begin{figure}[!ht]
 \centering
\includegraphics[width=0.485\textwidth]{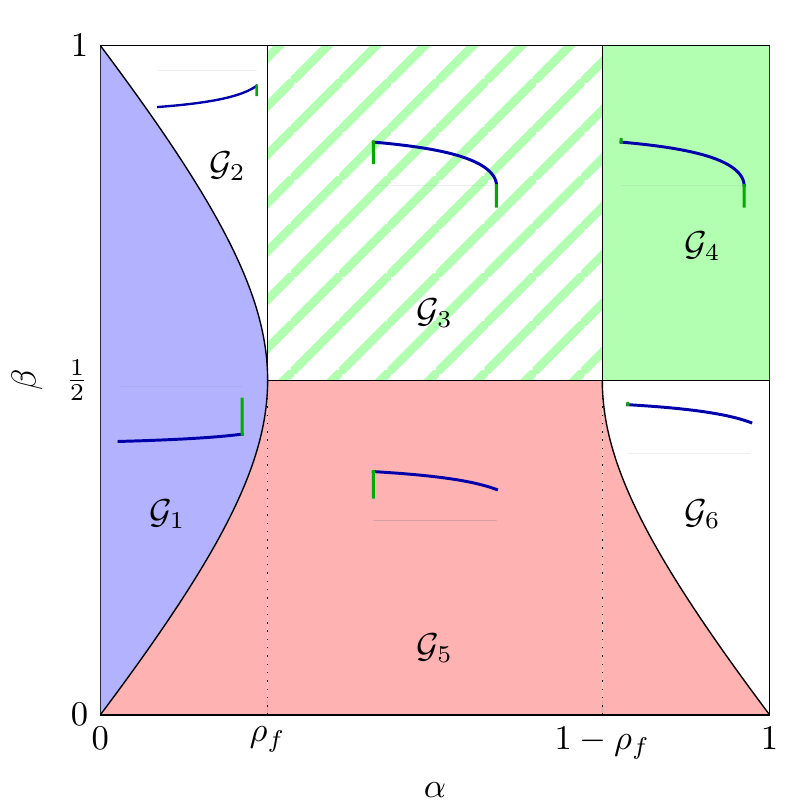}
 \caption{Schematic illustration of the regions $\mathcal{G}_i$ in $(\alpha,\beta)$-space defined in \eqref{eq:regions} with corresponding prototypical singular solutions of type $i$, $i=1,\dots,6$. The slow parts of the orbits are displayed in blue, while the fast ones (layers) in green. The gray line in the insets corresponds to $\rho=\frac12$. We remark that the layers at $\xi=0$ are particularly tiny in $\mathcal{G}_4$ and $\mathcal{G}_6$.
 }
 \label{fig:bd_sing}
\end{figure}
We will show (in Proposition \ref{prop:singsol}) that within each of those region the structure of the singular solutions is the same. Note that our construction of singular solutions works also on all the boundary curves defined in \eqref{eq:g_bound} except for $\gamma_{15}$ and $\gamma_{23}$ where singular solutions are not unique (see Remark \ref{rem:fmam}).\\
\noindent
To this aim, we introduce the following six types of singular solutions (see Figure \ref{fig:bd_sing}):
\begin{description}
\item{Type 1.} Singular solutions which start on $\mathcal{C}_0^r$ at $\xi=0$, follow the reduced flow on $\mathcal{C}_0^r$ (where $\rho$ increases), and have a layer at $\xi=1$ in which $\rho$ increases.
\item{Type 2.} Singular solutions which start on $\mathcal{C}_0^r$ at $\xi=0$, follow the reduced flow on $\mathcal{C}_0^r$ (where $\rho$ increases), and have a layer at $\xi=1$ in which $\rho$ decreases.
\item{Type 3.} Singular solutions which have a layer at $\xi=0$ in which $\rho$ increases, follow the reduced flow on $\mathcal{C}_0^a$ (where $\rho$ decreases), and have another layer at $\xi=1$ in which $\rho$ decreases.
\item{Type 4.} Singular solutions which have a layer at $\xi=0$ in which $\rho$ decreases, follow the reduced flow on $\mathcal{C}_0^a$ (where $\rho$ decreases), and have another layer at $\xi=1$ in which $\rho$ decreases.
\item{Type 5.} Singular solutions which have a layer at $\xi=0$ in which $\rho$ increases, and follow the reduced flow on $\mathcal{C}_0^a$ (where $\rho$ decreases).
\item{Type 6.} Singular solutions which have a layer at $\xi=0$ in which $\rho$ decreases and follow the reduced flow on $\mathcal{C}_0^a$ (where $\rho$ decreases).
\end{description}
 
\noindent More details about the construction and structure of these singular orbits are given in the following proof.
\begin{proposition} \label{prop:singsol}
 Let $k(x) \in C^1(\mathbb{R})$ be monotone decreasing. Then for each $(\alpha,\beta) \in \mathcal{G}_i$, $i=1, \ldots 6$  there exists a unique singular solution $\Gamma^i$ of type $i$ to \eqref{eq:reduced equation rho}-\eqref{eq:boundary conditions reduced rho} composed of segments of orbits of the layer problem \eqref{eq:laypb_k} and the reduced problem \eqref{eq:k_redpb_rholg12} satisfying the boundary conditions.
\end{proposition}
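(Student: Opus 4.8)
The plan is to reduce the construction of $\Gamma^i$ to a one–dimensional intersection problem on the fibre $\{\xi=1\}$ and then to resolve it explicitly in each region, using that the reduced flow on $\mathcal{C}_0$ conserves $\rJ=k(\xi)\rho(1-\rho)$. Indeed, along any reduced orbit on $\mathcal{C}_0$ one has $k(\xi)\rho(\xi)(1-\rho(\xi))\equiv\rJ$ (this is just \eqref{eq:reduced equation rho} at $\varepsilon=0$ together with the defining relation of $\mathcal{C}_0$), so in the $(\xi,\rho)$–plane the reduced orbits are the level sets $\{k(\xi)\rho(1-\rho)=c\}$; each such level set consists of one arc in $\mathcal{C}_0^a$ and one in $\mathcal{C}_0^r$, touching the fold line $S$ only where $c=k(\xi)/4$. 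Since $k$ is strictly decreasing, along an orbit $\rho$ is strictly increasing in $\xi$ on $\mathcal{C}_0^r$ and strictly decreasing on $\mathcal{C}_0^a$, and two orbits contained in the same branch never cross. Evaluating $\rJ$ at the two endpoints of the special orbits then immediately gives the closed forms \eqref{eq:sp_rho} for $\rho_f$, $\rho^\ast(\alpha)$, $\rho_\ast(\beta)$, the criterion $\alpha\le\rho_f$ for the existence of $l_1$ in Remark~\ref{rem:l1}, and the coincidences between the orbits $S_i$ and $\tilde S_j$ along the curves $\gamma_{ij}$.

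Next I would establish the geometric characterisation of a singular solution. Reading off the layer dynamics \eqref{eq:laypb_k} — the fast fibres are the vertical segments $\{j=\mathrm{const}\}$, $\mathcal{C}_0^a$ is attracting and $\mathcal{C}_0^r$ repelling in forward fast time, and the fold $S$ admits no admissible continuation inside the physical strip $0\le\rho\le1$ other than tangential contact at $\xi=1$ — forces any singular solution to be the concatenation, in order of increasing $\xi$, of: an optional fast segment at $\xi=0$ joining a point of $\mathcal{L}$ to $\mathcal{C}_0$, necessarily landing on $\mathcal{C}_0^a$ at a point of $\mathcal{L}^+$ unless it degenerates to the point $l$; a reduced orbit on a single branch of $\mathcal{C}_0$ running from $\xi=0$ to $\xi=1$ without meeting $S$ on $(0,1)$; and an optional fast segment at $\xi=1$ joining $\mathcal{C}_0$ to a point of $\mathcal{R}$, necessarily issued in backward fast time from a point of $\mathcal{R}^-$ on $\mathcal{C}_0^r$ (unless it degenerates to $r$). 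Hence $\Gamma^i$ exists precisely when the forward image under the reduced flow of the admissible entry data, $\mathcal{L}_1^+\cup l_1$, meets the admissible exit data $\mathcal{R}^-\cup r$ on $\{\xi=1\}$, and is unique precisely when this intersection is a single point; the signs of the two layer jumps at that point then determine which of the six types it is.

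Finally I would carry out the intersection analysis region by region. Because $\mathcal{L}$ and $\mathcal{R}$ are monotone graphs and every map used to produce $\mathcal{L}_1^+$ and $\mathcal{R}^-$ from them — the layer map off the fold, and the reduced flow on each branch — is order–preserving, both $\mathcal{L}_1^+\cup l_1$ and $\mathcal{R}^-\cup r$ are monotone arcs (possibly together with one isolated endpoint) in the $\rho$–coordinate on $\{\xi=1\}$. Their endpoints are exactly $\tfrac12$, $\rho^\ast(\alpha)$ and, when it exists, $\rho(1,\alpha)$ on the entry side, and $0$, $\rho_\beta$, $1-\beta$ on the exit side, and by the formulas \eqref{eq:sp_rho} — together with the identity $\alpha=1-\rho_\ast(\beta)\Leftrightarrow\beta=\rho^\ast(\alpha)$ — these heights are ordered exactly according to the inequalities defining $\mathcal{G}_i$. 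A case check over the six regions, each reducing to a one–line comparison of these explicit quantities, then shows that in each $\mathcal{G}_i$ exactly one overlap pattern occurs, yielding a single intersection point and hence a unique singular solution whose layer jumps have the prescribed directions, i.e.\ of type $i$; one also verifies that the admissibility bounds $\rho_\alpha\le s$ and $t\le\rho_\beta$ hold automatically at the connecting point. The main obstacle is this last bookkeeping: although each comparison is elementary, keeping track of which arc meets which, and of the direction of each layer jump, across all six regions — and ruling out every other configuration, such as a premature crossing of $S$ or a layer leaving the physical strip $0\le\rho\le1$ — is where the actual work lies. The structural facts that make it tractable are the no–crossing property of reduced orbits on a fixed branch and the conservation of $\rJ=k\rho(1-\rho)$, which together render the whole shooting argument explicit.
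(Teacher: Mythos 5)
Your proposal is correct and follows essentially the same route as the paper: a shooting argument that evolves $\mathcal{L}$ forward and $\mathcal{R}$ backward to $\{\xi=1\}$, uses conservation of $\rJ=k(\xi)\rho(1-\rho)$ along reduced orbits to obtain the explicit values in \eqref{eq:sp_rho} and the jump endpoints, and decides existence, uniqueness and type from the single intersection point of $\mathcal{L}_1^+\cup l_1$ with $\mathcal{R}^-\cup r$ and the direction of the layer jumps there. The only difference is organisational (you sort the bookkeeping by region and make the monotonicity/ordering argument for uniqueness slightly more explicit, whereas the paper sorts by which of the three intersection configurations occurs), so no substantive gap.
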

\begin{proof}
The proof is based on the shooting technique outlined above. Technically speaking, we show that the intersection between the sets $\mathcal{L}_1^+ \cup l_1$ in \eqref{eq:L+_1}-\eqref{eq:l_1} and $\mathcal{R}^- \cup r$ in \eqref{eq:p_r}-\eqref{eq:R-} is non-empty, and in particular consists of one point. This gives us the unique values of $\rho_0,\,\rho_1$ for which a singular orbit exists depending on $\alpha$ and $\beta$, which in turn allows us to identify the six types of singular solutions corresponding to the six regions defined in \eqref{eq:regions}.
While we claim the existence of singular solutions only in the open regions $\Gamma^i$, $i=1,\ldots,6$ we also comment on the singular configurations
where $(\alpha,\beta)$ lies on the curves $\gamma_{ij}$ from (\ref{eq:g_bound}).\\
In principle there are four possible ways for the intersection between $\mathcal{L}_1^+ \cup l_1$ and $\mathcal{R}^- \cup r$ to occur; three of these lead to two possible profiles, each corresponding to two regions in $(\alpha,\beta)$-parameter space, while the fourth case ($l_1 \cap r$) leads to an empty intersection, since $l_1$ and $r$ are separated from $\mathcal{L}_1^+$ and $\mathcal{R}^-$, respectively, only for $\alpha$ and $\beta$ both less than $\frac12$, and in this case they can never coincide.
Thus, we are left with:
\begin{description}
 \item[Case 1: $l_1 \cap \mathcal{R}^- \neq \emptyset$.] From the investigation of this case we obtain orbits of type 1 and 2.
 \item[Case 2: $\mathcal{L}_1^+ \cap \mathcal{R}^- \neq \emptyset$.] From the investigation of this case we obtain orbits of type 3 and 4.
 \item[Case 3: $\mathcal{L}_1^+ \cap r \neq \emptyset$.] From the investigation of this case we obtain orbits of type 5 and 6.
\end{description}
We point out that in our arguments below we use the fact that $\frac{k(0)}{k(1)}>1$, deriving from assumption \eqref{eq:g}. In the following, we examine Cases 1-3 in more detail.\\

\emph{Case 1: $l_1\,\cap\,\mathcal{R}^- \neq \emptyset$.} By definition of $l_1$, this occurs only when $\alpha \leq \rho_f$. In this case, we have $l_1 \in \mathcal{R}^-$, which implies that $p_0 = l$ and, consequently, $\rho_0=\alpha$. Moreover, since $\rho(1,s)=\rho^\ast(\alpha)$, following the flow of the layer problem until it hits $\mathcal{R}$ we obtain
\begin{equation} \label{eq:rho1_case1}
 \rho_1 = \frac{\alpha(1-\alpha)k(0)}{\beta k(1)}.
\end{equation}
By construction, we have that $p_0 \in \mathcal{C}_0^r$, so the singular orbit in this case consists in a slow motion along $\mathcal{C}_0^r$ where $\rho$ increases followed by a layer at $\xi=1$. The nature of this layer depends on $\alpha$ and $\beta$ as follows:
\begin{itemize}
 \item When $\alpha < \rho_f$ and $\rho^\ast(\alpha) < \beta < 1-\rho^\ast(\alpha)$, i.e.~for $(\alpha,\beta) \in \mathcal{G}_1$, $\rho$ increases along the boundary layer at $\xi=1$. The corresponding singular solution is therefore of type $1$ (see Figure \ref{fig:singsol_1}).
 \item When $\alpha < \rho_f$ and $\beta > 1-\rho^\ast(\alpha)$, i.e.~for $(\alpha,\beta) \in \mathcal{G}_2$, $\rho$ decreases along the boundary layer at $\xi=1$. Therefore, the corresponding singular solution is of type $2$ (see Figure \ref{fig:singsol_2}, \ref{app:sspl}). 
\end{itemize}
We note that when $\alpha < \rho_f$ and $\beta=1-\rho^\ast(\alpha)$ (i.e.~on $\gamma_{12}$) there is no layer at $\xi=1$.\\
 \begin{figure}[!ht] 
 \centering
 \begin{minipage}{.3\textwidth}
  \centering
  \def\svgwidth{1\textwidth}
  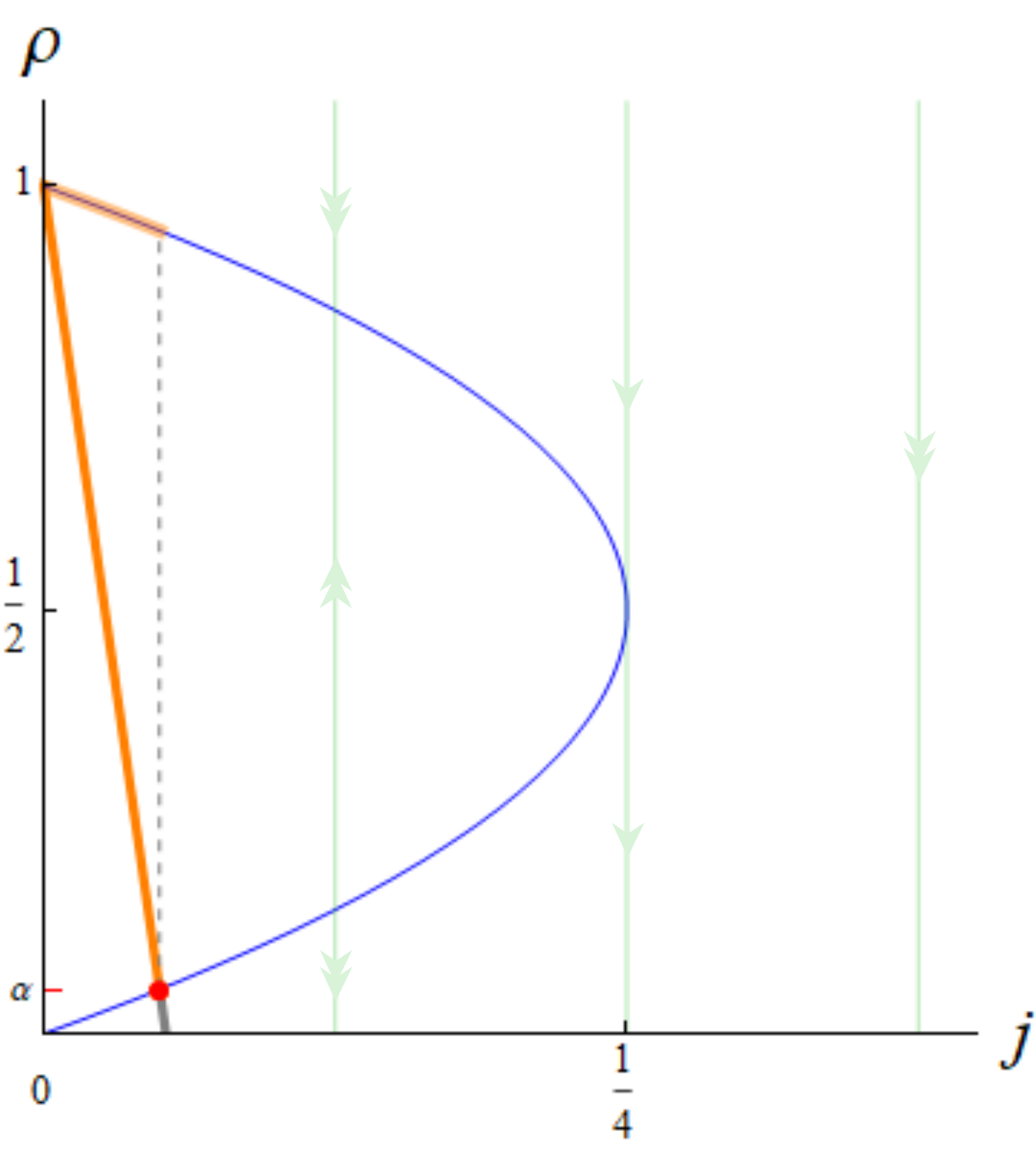\\
  (a) $\xi=0$
 \end{minipage}
 \hspace{.5cm}
 \begin{minipage}{.3\textwidth}
  \centering
  \def\svgwidth{1\textwidth}
  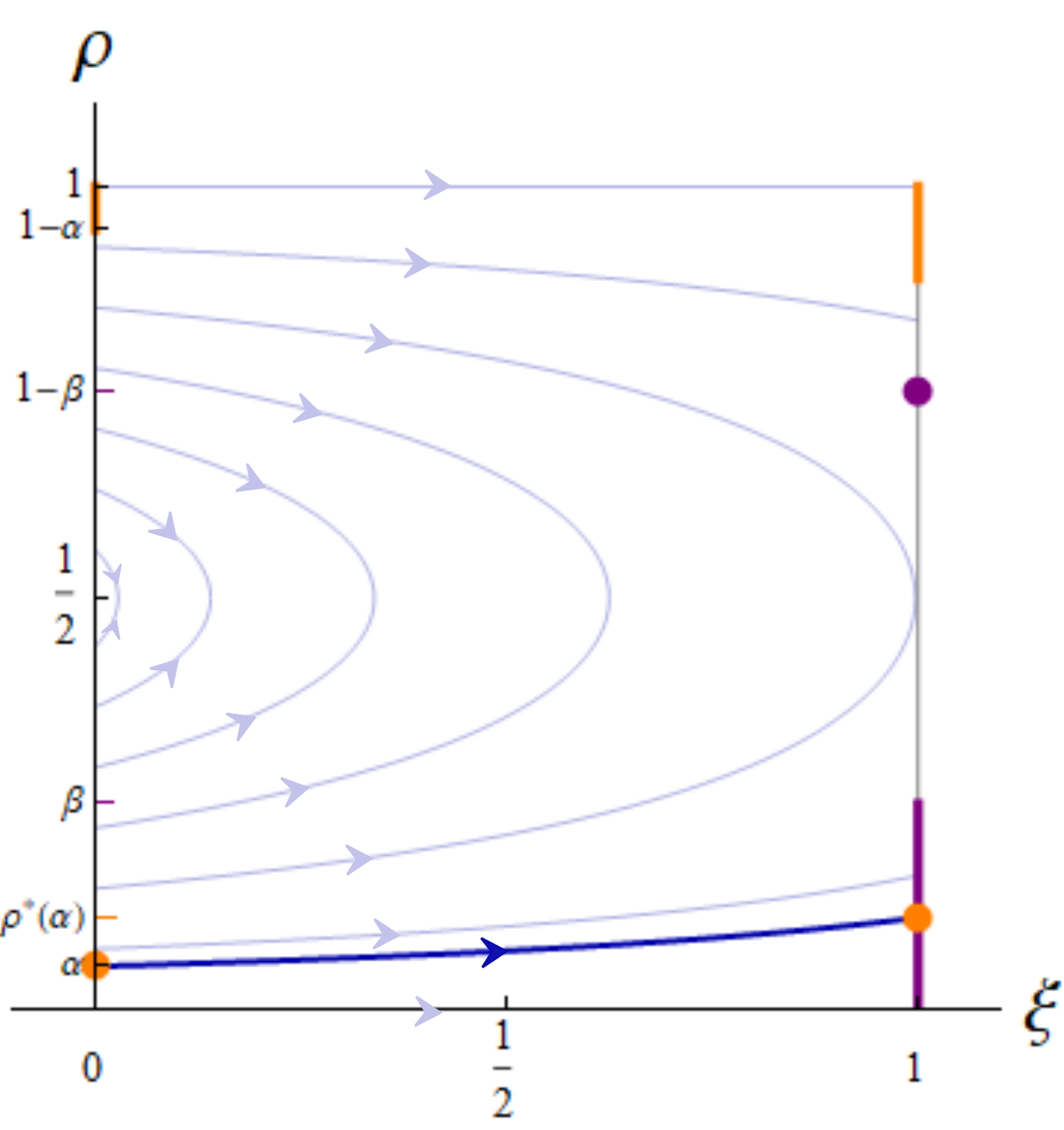\\
  (b) $\xi \in [0,1]$
 \end{minipage}
 \hspace{.5cm}
 \begin{minipage}{.3\textwidth}
  \centering
  \def\svgwidth{1\textwidth}
  \input{1_right_3_a.pdf_tex}\\
  (c) $\xi=1$
 \end{minipage}
  \begin{minipage}{.3\textwidth}
  \centering
  \def\svgwidth{1\textwidth}
  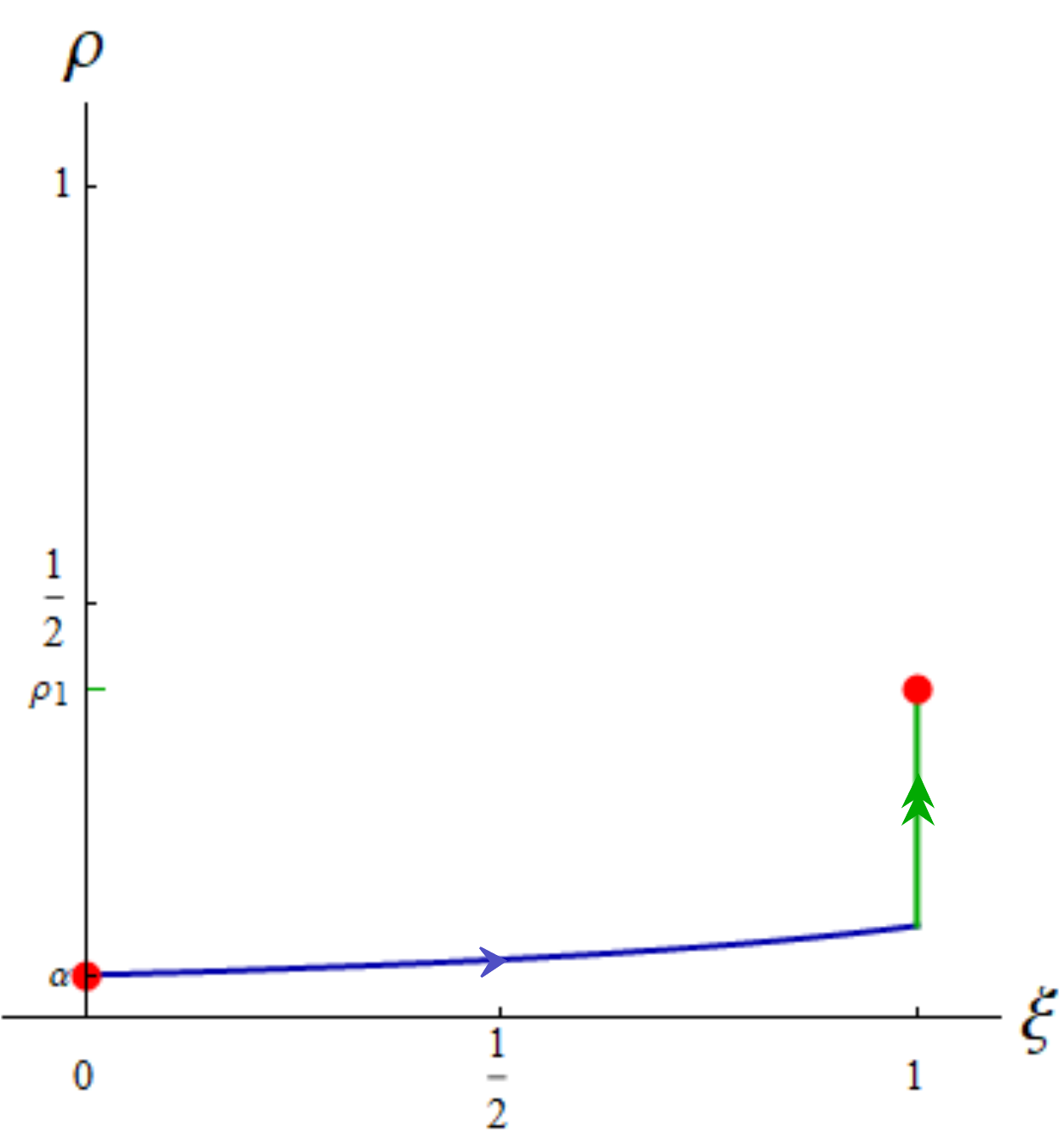\\
  (d) 
 \end{minipage}
  \hspace{.5cm}
 \begin{minipage}{.3\textwidth}
  \centering
  \def\svgwidth{1\textwidth}
  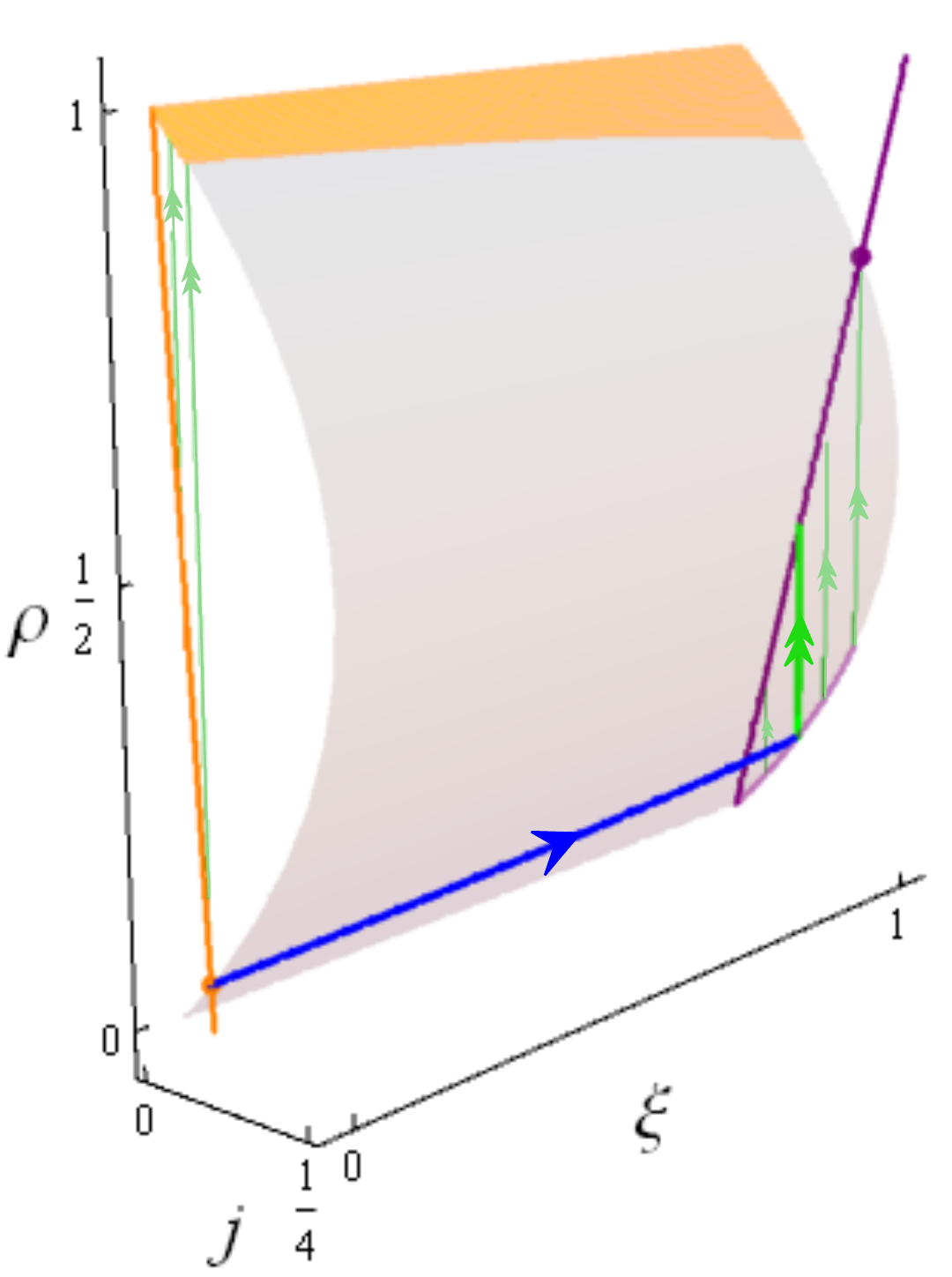\\
  (e)
 \end{minipage}
 \caption{Schematic representation of a singular solution of type 1. (a) Boundary conditions at $\xi=0$ in $(j,\rho)$-space: the orange line is $\mathcal{L}$, while the orange curve is $\mathcal{L}^+$. The red dot represents $p_0$, where in this case $p_0=l$. (b) Slow evolution on $\mathcal{C}_0$ from $(0,\alpha)$ to $(1,\rho^\ast(\alpha))$ (blue curve). The orange lines are the projection of $\mathcal{L}$ (at $\xi=0$) and $\mathcal{L}^+$ (at $\xi=1$) on $\mathcal{C}_0$, while the purple one represents the projection of $\mathcal{R}^-$ on $\mathcal{C}_0$ at $\xi=1$. The orange dots correspond to $l$ (at $\xi=0$) and $l_1$ (at $\xi=1$), while the purple dot corresponds to $r$. (c) Here, we consider $\xi=1$ in $(j,\rho)$-space. The red dot corresponds to $p_1$, while the purple line and curve represent the manifolds $\mathcal{R}$ and $\mathcal{R}^-$, respectively. The green line corresponds to the layer of the singular orbit where $\rho$ increases from $\rho^\ast(\alpha)$ to $\rho_1=\frac{\alpha(1-\alpha)k(0)}{\beta k(1)}$. (d) Singular solution of type 1 in $(\xi,\rho)$-space. (e) Singular solution of type 1 in $(j,\xi,\rho)$-space.}
 \label{fig:singsol_1}
 \end{figure}

\emph{Case 2: $\mathcal{L}_1^+ \cap \mathcal{R}^- \neq \emptyset$.} We observe that by definition $\mathcal{L}_1^+ \subset \mathcal{C}_0^a$ and $\mathcal{R}^- \subset \mathcal{C}_0^r$. Therefore, these two sets have a non-empty intersection if and only if $\alpha \geq \rho_f$ and $\beta \geq \frac12$, consisting in the point
 \begin{equation} \label{eq:foldp}
     q = \left( \frac14,1,\frac12 \right) \in S.
 \end{equation}
The fact that $\rho(1,s)=\frac12$, i.e.~$\rho(0,s)=1-\rho_f$, allows us to identify the starting and ending points of the orbit by following the flow of the layer problem (backwards at $\xi=0$ and forward at $\xi=1$). This leads to two layers at both endpoints, and
\begin{equation} \label{eq:rho01_case2}
 \rho_0 = 1-\frac{k(1)}{4 \alpha k(0)}, \qquad \rho_1 = \frac{1}{4\beta}.
\end{equation}
In particular, we have:
\begin{itemize}
 \item When $\rho_f < \alpha < 1-\rho_f$ and $\beta > \frac12$, i.e.~for $(\alpha,\beta) \in \mathcal{G}_3$, $\rho$ increases along the boundary layer at $\xi=0$. This implies that the singular orbit is of type $3$ (see Figure \ref{fig:singsol_3}, \ref{app:sspl}).
 \item When $\alpha > 1-\rho_f$ and $\beta > \frac12$, i.e.~in $\mathcal{G}_4$, $\rho$ decreases along the boundary layer at $\xi=0$, and the singular solution is of type $4$ (see Figure \ref{fig:singsol_4}, \ref{app:sspl}).
\end{itemize}
We note that when $\alpha=1-\rho_f$ and $\beta > \frac12$ (i.e.~on $\gamma_{34}$) we have no boundary layer at $\xi=0$.\\

\emph{Case 3: $\mathcal{L}_1^+ \cap r \neq \emptyset$.} By definition of $r$, this occurs only when $\beta < \frac12$. In this case, we have $r \in \mathcal{L}_1^+$, which implies that $p_1 = r$ and, consequently, $\rho_1=1-\beta$. Moreover, since $\rho(0,s)=\rho_\ast(\beta)$, following the layer problem backwards until it hits $\mathcal{L}$, we obtain
\begin{equation} \label{eq:rho0_case3}
 \rho_0=1-\frac{\beta(1-\beta)k(1)}{\alpha k(0)}.
\end{equation}
Consequently, there is a boundary layer at $\xi=0$. Since $\mathcal{L}_1^+ \subset \mathcal{C}_0^a$, $\rho$ decreases along the reduced flow. There are two possible kinds of layers at $\xi=0$, i.e.
\begin{itemize}
 \item If $1-\rho_\ast(\beta) < \alpha < \rho_\ast(\beta)$ and $\beta < \frac12$, i.e.~if $(\alpha,\beta) \in \mathcal{G}_5$, $\rho$ is increasing and the singular solution is of type $5$ (see Figure \ref{fig:singsol_5}, \ref{app:sspl}).
 \item If $\alpha > \rho_\ast(\beta)$ and $\beta < \frac12$, i.e.~if $(\alpha,\beta) \in \mathcal{G}_6$, $\rho$ is decreasing, and we have a singular solution of type $6$ (see Figure \ref{fig:singsol_6}, \ref{app:sspl}).
\end{itemize}
We note that when $\alpha=\rho_\ast(\beta)$ and $\beta < \frac12$ (i.e.~on $\gamma_{56}$), there are no boundary layers. We also remark that when $\alpha > \rho_f$ and $\beta = \frac12$ (i.e.~on $\gamma_{35}$ and $\gamma_{46}$), we have a unique solution with no boundary layer at $\xi=1$ satisfying
 \begin{equation}
  \rho(0,s)=1-\rho_f, \quad \rho(1,s)=\frac12.
 \end{equation}
\end{proof}

\begin{remark}
 The construction in Case 3 is essentially the same as the one in Case 1 upon reversal of the flow direction in \eqref{eq:fosk}.
\end{remark}

\begin{remark}[Excluded cases] \label{rem:fmam}
When $\alpha \leq \rho_f$ and $\beta=\rho^\ast(\alpha)$ - i.e.~when $(\alpha,\beta) \in \gamma_{15}$ - we have that both $\mathcal{L}_1^+ \cap r$ and $l_1 \cap \mathcal{R}^-$ are non-empty. Consequently, there are two possible reduced solutions, satisfying (see Figure \ref{fig:sp_cases}(a))
  \begin{equation}
  \text{(a)} \, \begin{cases}
              \rho(0,s)=\rho_0=\alpha,\\
              \rho(1,s)=\beta,
             \end{cases}
  \text{ or } \quad \text{ (b) } \begin{cases}
              \rho(0,s)=1-\alpha,\\
              \rho(1,s)=\rho_1=1-\beta.
             \end{cases}
 \end{equation}
 In this case, we have a continuum of singular solutions, since at any $\xi \in [0,1]$ it is possible to jump from the slow trajectory of the reduced flow in (a) to the one in (b) by means of the flow of the layer problem. Analogously, we obtain a continuum of singular solutions when $\alpha=\rho_f$, $\beta \geq \frac12$, i.e.~when $(\alpha,\beta) \in \gamma_{23}$. In this case, in fact, we have that both $\mathcal{L}_1^+ \cap \mathcal{R}^-$ and $l_1 \cap \mathcal{R}^-$ are non-empty, and therefore there are two possible reduced solutions (with jumps possible at any $\xi \in [0,1]$ via the flow of the layer problem) satisfying (see Figure \ref{fig:sp_cases}(b))
 \begin{equation}
  \text{(c)} \, \begin{cases}
              \rho(0,s)=\rho_0=\rho_f,\\
              \rho(1,s)=\frac12,
             \end{cases}
  \text{ or } \quad \text{ (d) } \begin{cases}
              \rho(0,s)=1-\rho_f,\\
              \rho(1,s)=\frac12.
             \end{cases}
 \end{equation}
 Hence, our strategy to construct singular solutions to Equation \eqref{eq:reduced equation rho}-\eqref{eq:boundary conditions reduced rho} as in Proposition \ref{prop:singsol} does not give uniqueness, and therefore the methods based on transversality arguments to infer persistence of solutions for $0 < \varepsilon \ll 1$ do not apply. We leave the analysis of this more delicate situation for future work.
\end{remark}

\begin{figure}[!ht]
      \centering
     \begin{minipage}{0.45\textwidth}
          \centering
          \def\svgwidth{1\textwidth}
          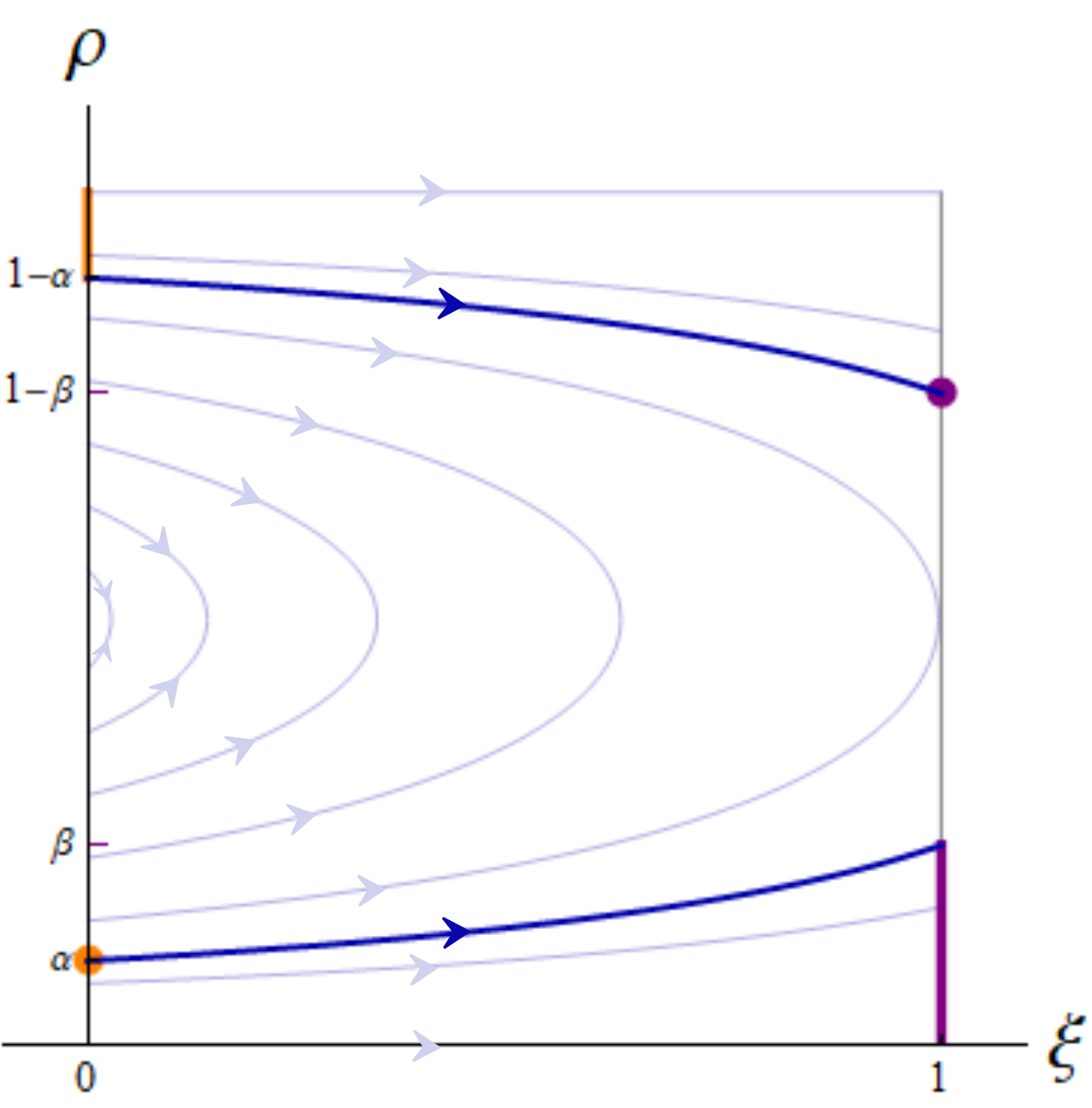\\
          (a)
     \end{minipage}
     \hfill
%      \raggedleft
     \begin{minipage}{0.45\textwidth}
          \centering
          \def\svgwidth{1\textwidth}
          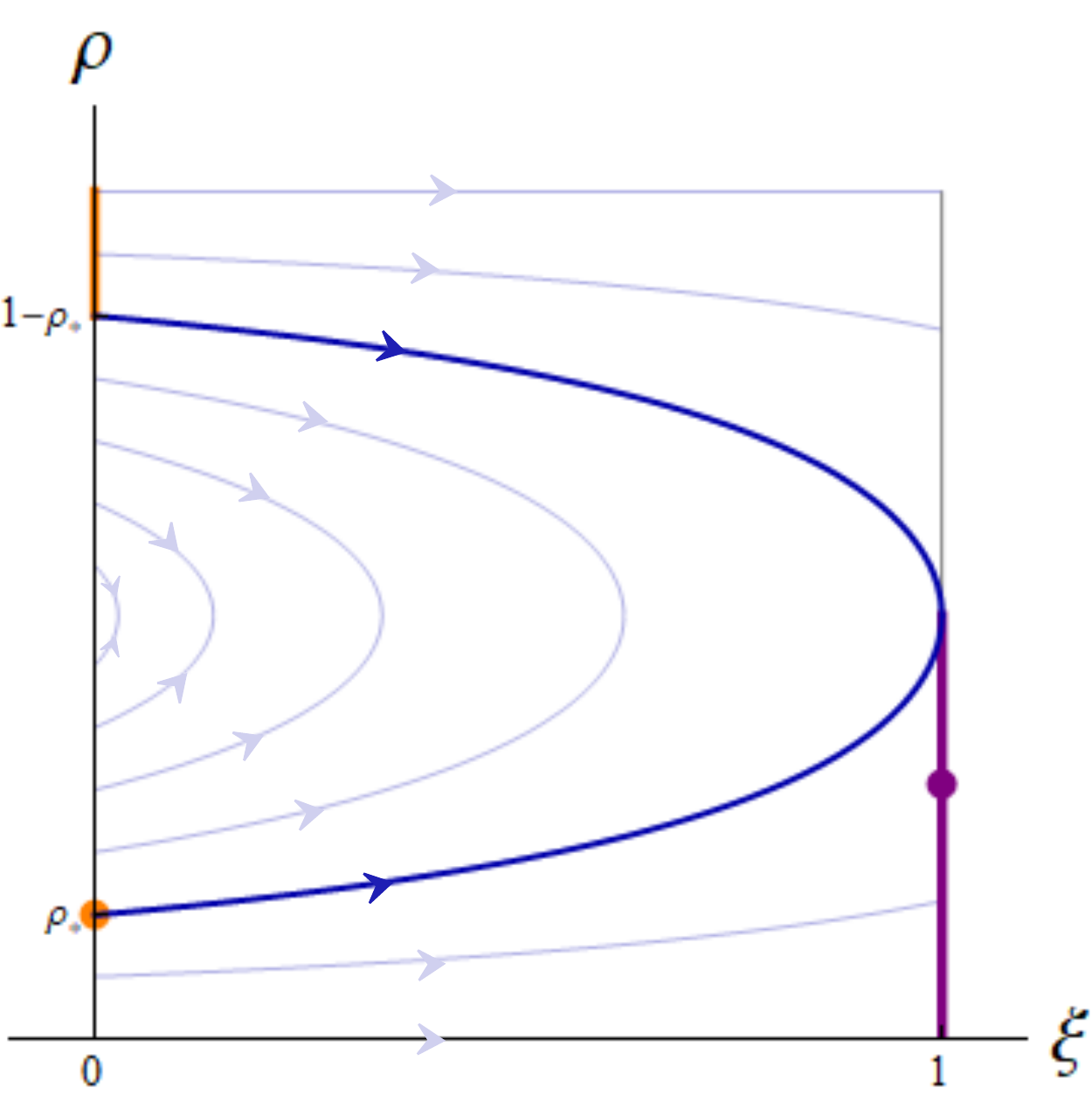\\
          (b)
     \end{minipage}
     \caption{Schematic representation in $(\xi,\rho)$-space of the slow portions (blue curves) of the possible singular orbits for (a) $(\alpha,\beta) \in \gamma_{15}$ and (b) $(\alpha,\beta) \in \gamma_{23}$. The orange and purple curves correspond to the projection of $\mathcal{L}^+$ and $\mathcal{R}$, respectively, on the $(\xi,\rho)$-space. Fast jumps from the slow solution in $\mathcal{C}_0^r$ to the slow solution in $\mathcal{C}_0^a$ are possible at each $\xi \in [0,1]$.}
     \label{fig:sp_cases}
\end{figure}

We now prove that the singular solutions from Proposition \ref{prop:singsol} perturb to solutions of \eqref{eq:reduced equation rho}-\eqref{eq:boundary conditions reduced rho} for $\varepsilon$ sufficiently small.

\begin{theorem} \label{thm:k}
 For each $(\alpha,\beta) \in \mathcal{G}_i$, $i=1,\dots,6$, the boundary value problem \eqref{eq:reduced equation rho}-\eqref{eq:boundary conditions reduced rho} has a unique solution $\rho(x,\alpha,\beta,\varepsilon)$ for $\varepsilon$ sufficiently small. In the phase-space formulation \eqref{eq:fosk}, this solution corresponds to an orbit $\Gamma^i_\varepsilon$ which is $\mathcal{O}(\varepsilon^\mu)$-close to $\Gamma^i$ in terms of Hausdorff distance, where $\mu=1$ for $i=1,2,5,6$ and $\mu=2/3$ for $i=3,4$.
\end{theorem}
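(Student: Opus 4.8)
The plan is to read the boundary value problem as a geometric connection problem for \eqref{eq:fosk} and to show, using standard GSPT tools — Fenichel theory, an Exchange-Lemma-type estimate, and the fold-point blow-up of Krupa and Szmolyan — that the singular orbit $\Gamma^i$ of Proposition~\ref{prop:singsol} perturbs to a genuine orbit joining the two boundary manifolds. The conditions \eqref{eq:bc_sf} define two $\varepsilon$-independent lines, $\mathcal{B}_0=\{(\alpha(1-\rho),0,\rho)\}$ in the slice $\{\xi=0\}$ and $\mathcal{B}_1=\{(\beta\rho,1,\rho)\}$ in the slice $\{\xi=1\}$, and a solution of the BVP is precisely an orbit of \eqref{eq:fosk} starting on $\mathcal{B}_0$ and reaching $\mathcal{B}_1$. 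A key simplification is that Theorem~\ref{thm:existence} together with Lemma~\ref{lem:uniqueness} already guarantee a \emph{unique} solution for every $\varepsilon>0$; hence it is enough to exhibit, for $\varepsilon$ small, \emph{one} orbit $\mathcal{O}(\varepsilon^\mu)$-close to $\Gamma^i$, whence uniqueness and the identification with type $i$ follow automatically.

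On compact subsets of $\mathcal{C}_0^a$ and $\mathcal{C}_0^r$ at a fixed positive distance from the fold line $S$, Fenichel's theorems provide locally invariant slow manifolds $\mathcal{C}_\varepsilon^a$, $\mathcal{C}_\varepsilon^r$ that are $C^1$-$\mathcal{O}(\varepsilon)$-close to the critical manifold, with the associated exponentially attracting/repelling fast fibrations. For each $i$ one then tracks the pieces of $\Gamma^i$ in turn. An entry layer at $\xi=0$ (present for $i=3,4,5,6$) is a fast fibre $\{j=\mathrm{const}\}$; since $\mathcal{B}_0$ has slope $-\alpha\ne 0$ in $(j,\rho)$-coordinates it is transverse to this fibre, and by $C^1$-closeness of the fibration the forward image of the relevant point of $\mathcal{B}_0$ lands $\mathcal{O}(\varepsilon)$-close to the appropriate point of $\mathcal{C}_\varepsilon^a$. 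For $i=1,2$ there is no entry layer: $\mathcal{B}_0$ (slope $-\alpha$) crosses $\mathcal{C}_0^r$ (slope $1-2\rho>0$) transversally, so it meets $\mathcal{C}_\varepsilon^r$ in a single point $\mathcal{O}(\varepsilon)$-close to $l$. The slow segment then follows $\mathcal{C}_\varepsilon^a$ or $\mathcal{C}_\varepsilon^r$ and, since the reduced flow \eqref{eq:k_redpb_xrho} is the $\varepsilon\to 0$ limit of the flow on $\mathcal{C}_\varepsilon$ and $\mathcal{C}_\varepsilon$ is smooth in $\varepsilon$, stays $\mathcal{O}(\varepsilon)$-close to the reduced orbit of Proposition~\ref{prop:singsol}, provided that orbit keeps a fixed distance from $S$ — which holds for $i\ne 3,4$, where the relevant endpoints ($\alpha,\rho^\ast(\alpha)$ for $i=1,2$; $\rho_\ast(\beta),1-\beta$ for $i=5,6$) lie strictly on one side of $\tfrac12$ throughout the open region by \eqref{eq:sp_rho}. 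Finally the exit: for $i=5,6$ the slow orbit reaches $\xi=1$ near $r\in\mathcal{C}_0^a$, and $\mathcal{B}_1$ passes through $r$; for $i=1,2$ the orbit must leave $\mathcal{C}_\varepsilon^r$ in the boundary layer near $\xi=1$, and the forward image of a short arc of $\mathcal{B}_0$ near $l$ sweeps at $\xi=1$, by the exponential expansion off $\mathcal{C}_\varepsilon^r$ (an Exchange-Lemma-type estimate, equivalently the Exchange Lemma in reversed time), a curve $C^1$-$\mathcal{O}(\varepsilon)$-close to the singular fast fibre through the jump-off point $(\rho^\ast(\alpha)(1-\rho^\ast(\alpha)),1,\rho^\ast(\alpha))$. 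In each of these configurations $\mathcal{B}_1=\{j=\beta\rho\}$ (slope $\beta\ne 0$) is transverse to the relevant curve, so the Implicit Function Theorem gives a unique intersection with $\mathcal{B}_1$ that is $\mathcal{O}(\varepsilon)$-close to $p_1$. This yields $\mu=1$ for $i=1,2,5,6$.

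For $i=3,4$ the reduced orbit on $\mathcal{C}_0^a$ reaches the fold line $S$ exactly at $\xi=1$, at the point $q=\bigl(\tfrac14,1,\tfrac12\bigr)$ of \eqref{eq:foldp}; this is a generic (quadratic) fold, crossed transversally by the reduced flow since $\dt{\xi}=1\ne 0$ there. Fenichel theory fails in an $\mathcal{O}(\varepsilon^{2/3})$-neighbourhood of $q$, where one invokes the blow-up / geometric desingularisation analysis of Krupa and Szmolyan for fold points: the attracting slow manifold $\mathcal{C}_\varepsilon^a$ extends into this neighbourhood, remains $\mathcal{O}(\varepsilon^{2/3})$-close to the critical manifold, and turns the corner onto the fast fibre through $q$ (the singular jump at $\xi=1$ down to $\rho_1=\tfrac1{4\beta}<\tfrac12$, admissible because $\beta>\tfrac12$ on $\mathcal{G}_3\cup\mathcal{G}_4$). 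Outside this neighbourhood all estimates are the $\mathcal{O}(\varepsilon)$ ones above, so the Hausdorff distance between $\Gamma^i_\varepsilon$ and $\Gamma^i$ is governed by the $\mathcal{O}(\varepsilon^{2/3})$ contribution of the fold passage, giving $\mu=2/3$; that the blown-up slow manifold, continued to $\xi=1$, meets $\mathcal{B}_1$ in exactly one point again follows from transversality of $\mathcal{B}_1$ to the exit fibre.

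The step I expect to be the main obstacle is precisely this fold analysis for $i=3,4$: because the fold sits \emph{at} the boundary $\xi=1$, the fold passage and the $\xi=1$ boundary layer occur at the same spatial location, so one must set up the Krupa–Szmolyan charts carefully, track the boundary manifold $\mathcal{B}_1$ through the blow-up, and verify that it meets the blown-up slow manifold transversally — rather than at the degenerate fold point itself — in order both to pin down the unique solution and to extract the sharp $\varepsilon^{2/3}$ rate. The remaining (non-fold) configurations are routine once the building blocks above are assembled. One should also check that the same construction still produces transversal intersections on the boundary curves $\gamma_{ij}$ \emph{not} excluded in Remark~\ref{rem:fmam}, even though the theorem is only asserted on the open regions $\mathcal{G}_i$.
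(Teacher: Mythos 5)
Your proposal is correct and follows essentially the same route as the paper's proof: a shooting argument that tracks the boundary manifold $\mathcal{L}$ forward by the flow of \eqref{eq:fosk}, uses Fenichel theory together with the exponential expansion/contraction along the fast fibration to show the transported curve meets $\mathcal{R}$ transversally $\mathcal{O}(\varepsilon)$-close to $p_1$ for $i=1,2,5,6$, and invokes the fold-point results (\cite{Szmolyan_2004}, i.e.\ the Krupa--Szmolyan-type blow-up) to get the $\mathcal{O}(\varepsilon^{2/3})$ closeness for $i=3,4$. Your only deviations are cosmetic: you import uniqueness from Theorem~\ref{thm:existence} and Lemma~\ref{lem:uniqueness} rather than from the shooting construction itself, and you treat $i=5,6$ in forward time where the paper simply reverses the flow and repeats Case~1.
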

\begin{proof}
The solutions for $\varepsilon$ small are obtained by perturbing from the singular solutions $\Gamma^i$, $i=1,\dots,6$. More precisely, we show that the manifold obtained by flowing the line $\mathcal{L}$ of points corresponding to the boundary conditions at $\xi=0$ to $\xi=1$ for $\varepsilon$ small intersects the line $\mathcal{R}$ of points corresponding to the boundary conditions at $\xi=1$ in a point which is close to the corresponding point of the singular solution. Analogously to Proposition \ref{prop:singsol}, this is done by considering three cases.\\

\emph{Case 1: $(\alpha,\beta) \in \mathcal{G}_i$, $i=1,2$}. In this case, the singular solution starts at the point $p_0=l\in \mathcal{L}$ (see Proposition \ref{prop:singsol}). We recall that the singular solution consists of a slow segment obtained by flowing $l$ to $l_1$ with the reduced flow on the repelling part $\mathcal{C}_0^r$ of the critical manifold, followed by a layer from $l_1$ to $p_1 \in \mathcal{R}$ (see Figure \ref{fig:singsol_1}-\ref{fig:singsol_2}).\\
We show below that for $0 < \varepsilon \ll 1$ the flow defined by \eqref{eq:fosk} takes a suitable small segment of $\mathcal{L}$ to a curve $\mathcal{L}_{1,\varepsilon}$ in the plane $\xi=1$. Furthermore, we show that $\mathcal{L}_{1,\varepsilon}$ intersects $\mathcal{R}$ in a point $p_{1,\varepsilon}$ which corresponds to the right end-point of the solution of the boundary value problem. Note that the full solution for $\xi \in [0,1]$ is obtained by following the flow backward from $p_{1,\varepsilon}$ to $\xi=0$. In more detail, Fenichel theory \cite{Fenichel_1979} implies that (compact subsets of) $\mathcal{C}_0^r$ perturbs smoothly to a repelling slow manifold $\mathcal{C}_\varepsilon^r$ with an unstable foliation $\mathcal{F}^u_\varepsilon$.
The line $\mathcal{L}$ intersects $\mathcal{C}_\varepsilon^r$ in a point $l_\varepsilon$ which is $C^1$ $\mathcal{O}(\varepsilon)$-close to $l$. The image of $l_\varepsilon$ by the slow flow on $\mathcal{C}_\varepsilon^r$ at $\xi=1$ is denoted by $l_{1,\varepsilon}$. The unstable fiber $\mathcal{F}_\varepsilon^u(l_{1,\varepsilon})$
is $C^1$ $\mathcal{O}(\varepsilon)$-close to the corresponding fiber $\mathcal{F}_0^u(l_1)$ of the layer problem. Since $\mathcal{L}$ intersects $\mathcal{C}_0^r$ transversely in $l$, most orbits starting in $\mathcal{L}$ are repelled away immediately, and only points on $\mathcal{L}$ exponentially close to $l_\varepsilon$ stay in a neighbourhood of the singular orbit $\Gamma^i$, $i=1,2$, up to $\xi=1$, where they are denoted by $\mathcal{L}_{1,\varepsilon}$. The exponential expansion along the unstable fibers implies that $\mathcal{L}_{1,\varepsilon}$ is $C^1$ $\mathcal{O}(\varepsilon)$-close to $\mathcal{F}_\varepsilon^u(l_{1,\varepsilon})$. Hence, $\mathcal{L}_{1,\varepsilon}$ intersects $\mathcal{R}$ in a point $p_{1,\varepsilon}$ as claimed.   \\

\emph{Case 2: $(\alpha,\beta) \in \mathcal{G}_i$, $i=3,4$}. In this case, the singular solution starts with a layer connecting the point $p_0 \in \mathcal{L}$ to a point on $\mathcal{C}_0^a$, followed by a segment of the reduced flow ending at the fold point $q \in S$, followed by a layer ending at $p_1 \in \mathcal{R}$. As before, we follow the line $\mathcal{L}$ of boundary conditions at $\xi=0$ forward by the flow \eqref{eq:fosk} and show that it intersects the line $\mathcal{R}$ of boundary conditions at $\xi=1$. Since the singular solution involves the point $q$ on the non-hyperbolic fold line $S$, results on extending GSPT to such problems \cite{Szmolyan_2004} are needed here. Consider a small segment of $\mathcal{L}$ containing $p_0$ and denote its extension by the forward flow of \eqref{eq:fosk} by $\mathcal{M}_\varepsilon$ for $\varepsilon$ small. Since $\mathcal{L}$ is a line and $\mathcal{M}_\varepsilon$ is defined by the flow, it is a smooth, two-dimensional manifold. Fenichel theory and the results in \cite{Szmolyan_2004} on the dynamics close to fold-curves imply that parts of $\mathcal{M}_\varepsilon$ are close to $p_1$ and are $\mathcal{O}(\varepsilon^{2/3})$-close to the plane $\{j=\frac14 \}$ in a neighbourhood of $p_1$ (see Figure \ref{fig:thm_case2}). Hence, $\mathcal{R}$ intersects $\mathcal{M}_\varepsilon$ in a point $p_{1,\varepsilon}$ (close to $p_1$) which corresponds to the right end-point of the boundary value problem. Again, the full solution is obtained by following the flow backward to $\xi=0$. \\

\begin{figure}[!ht]
	\centering
	\begin{overpic}[scale=.7]{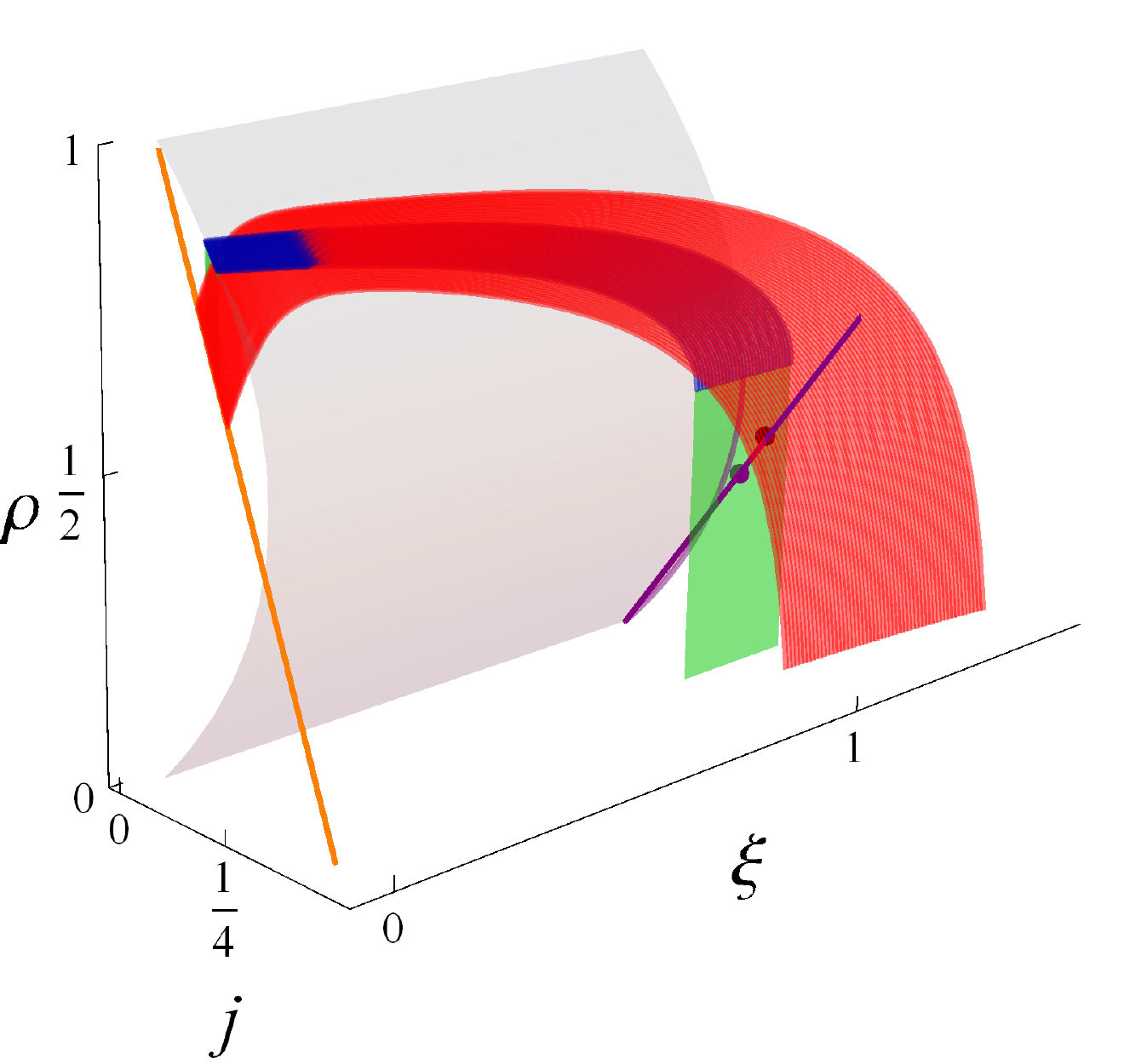}
	\put(71,55){\small{$p_{1,\varepsilon}$}}
	\put(66,49){\small{$p_1$}}
    \put(90,40){$\mathcal{M}_\varepsilon$}
	\end{overpic}
\caption{Schematic representation of the manifold $\mathcal{M}_\varepsilon$ (red surface). This manifold limits on the union of the orbits which solve the layer \eqref{eq:laypb_k} and the reduced \eqref{eq:k_redpb_xrho} problem (green and blue surfaces, respectively) as $\varepsilon \to 0$ starting from a small segment on $\mathcal{L}$ (orange line) at $\xi=0$. The purple dot corresponds to $p_1$, while the black dot represents the point $p_{1,\varepsilon}$ where $\mathcal{M}_\varepsilon$ and $\mathcal{R}$ (purple line) intersect. The full orbit for the boundary value problem is then obtained starting from $p_{1,\varepsilon}$ and tracking it backward up to $\xi=0$.}
\label{fig:thm_case2}
\end{figure}

\emph{Case 3: $(\alpha,\beta) \in \mathcal{G}_i$, $i=5,6$}. This case is completely analogous to Case 1 upon reversal of the flow direction.
\end{proof}

\begin{remark}
 Note that in layers away from the fold line $S$ solutions have exponential decay/growth rates, while in layers which involve the fold points on $S$ solutions have slower, algebraic decay/growth rates. Moreover, the behaviour near fold points is also responsible for the larger discrepancy between singular solutions and true orbits of \eqref{eq:reduced equation rho}-\eqref{eq:boundary conditions reduced rho} in regions $\mathcal{G}_i$, $i=3,4$. These effects are also observed in the computational results presented in Section \ref{sec:numerics}.
\end{remark}

\begin{remark}[Straight corridors.]
 For functions $k$ such that $k(x)\equiv k_0$, with $k_0>0$ - describing a straight corridor - Equation \ref{eq:reduced equation rho} implies that $j$ must be constant along solutions of the boundary value problem. The construction described in Section \ref{sec:vanvis} therefore simplifies, since Equation \eqref{eq:fosk} and \eqref{eq:fosk_fast} become
\begin{equation} \label{eq:fosk_sc}
 \begin{aligned}
  \varepsilon \dt{\rho} &= \rho(1-\rho)-j, \\
  \dt{\xi} &= 1,
 \end{aligned}
\end{equation}
and
\begin{equation} \label{eq:fosk_fast_sc}
 \begin{aligned}
  \rho' &= \rho(1-\rho)-j, \\
  \xi' &= \varepsilon,
 \end{aligned}
\end{equation}
respectively. The reduced problem \eqref{eq:k_redpb} is given by
\begin{equation} \label{eq:redk1k2_sc}
\begin{aligned}
  0 &= \rho(1-\rho)-j, \\
  \dt{\xi} &= 1,
\end{aligned}
\end{equation}
whose orbits consist in straight lines with constant $\rho$ on $\mathcal{C}_0$. In this situation,
we have that $\rho_f = \frac12$. Therefore the region $\mathcal{G}_3$ disappears, leaving only $5$ regions in total. The singular solutions for $(\alpha,\beta) \in \mathcal{G}_1$, $\mathcal{G}_2$, $\mathcal{G}_5$, and $\mathcal{G}_6$ are constructed as in Proposition \ref{prop:singsol}. In $\mathcal{G}_4$, however, the slow transition occurs entirely on $S$, where the only variable which varies is $\xi$ while $\rho$ and $j$ remain constant.

For $\varepsilon > 0$ and $k \equiv 1$, we give explicit expressions for $\rho$ in the \ref{sec:str_chann}. We also show that $J_\mathrm{max} := \max_{\alpha,\beta} \rJ = \frac{1}{4} + o(\varepsilon)$.
Therefore, in the corresponding singular limit $\varepsilon = 0$ we have $\mathrm{J}_\mathrm{max} = \frac14$. This agrees with the fact that the maximum (constant) value achieved by $j$ in region $\mathcal{G}_{6}$ is $j=\frac14$, and $\mathrm{J}_{\mathrm{max}}=j_\mathrm{max}$.
\end{remark}

\begin{remark}%[Domain discontinuities or local minima]
    We briefly comment on more realistic and challenging geometries, e.g.~cases where $k$ is discontinuous and/or non monotonic, which will also be considered in Section \ref{sec:numerics}. \\
    An interesting first class of such problems consists of corridors with piecewise constant functions $k(x)$: in this case, the orbits necessarily cross these discontinuities. 
    Since geometric singular perturbation theory requires at least $C^1$ smoothness,
    standard arguments for the persistence of singular solutions are not directly applicable and additional matching arguments are needed at the discontinuities of $k(x)$. \\
    Another interesting case are corridors corresponding to smooth functions $k(x)$ which have a nondegenerate minimum in the interval $(0,1)$, such as e.g.
\begin{equation} \label{eq:k_smooth}
 k(x)=1+c_1\,\mathrm{cos}(2 \pi x),
\end{equation}
with $0 < c_1 < 1$. Further below, we also refer to those domains with non monotonic $k(x)$ as ``bottlenecks''.
In this case, the analysis can in principle be carried out in a similar way as for monotone functions $k(x)$. However, new phenomena arise at points on the fold line $\rho = \frac 12$ corresponding to the minimum of $k(x)$ where the derivative of $k$ vanishes.  The corresponding points on the fold line are called 
folded saddles. 
At these special singularities solutions of the reduced problem my cross the fold line from $\mathcal{C}_0^r$ to
$\mathcal{C}_0^a$ and vice versa, e.g.~see the folded saddle $(j,\xi,\rho)=\left(\frac14,\frac12,\frac12  \right)$ (see Figure \ref{fig:bottlenecks}). These special solutions are known to persist for $\varepsilon \ll 1$ as 
so called canard solutions \cite{Szmolyan_2001, Kuehn_2015}. The analysis of the impact of canard solutions  
on the existence and types of solutions of the boundary value problem \eqref{eq:reduced equation rho}-\eqref{eq:boundary conditions reduced rho} is left for future work.
\end{remark}

\begin{figure}
    \centering
    \includegraphics[scale=0.5]{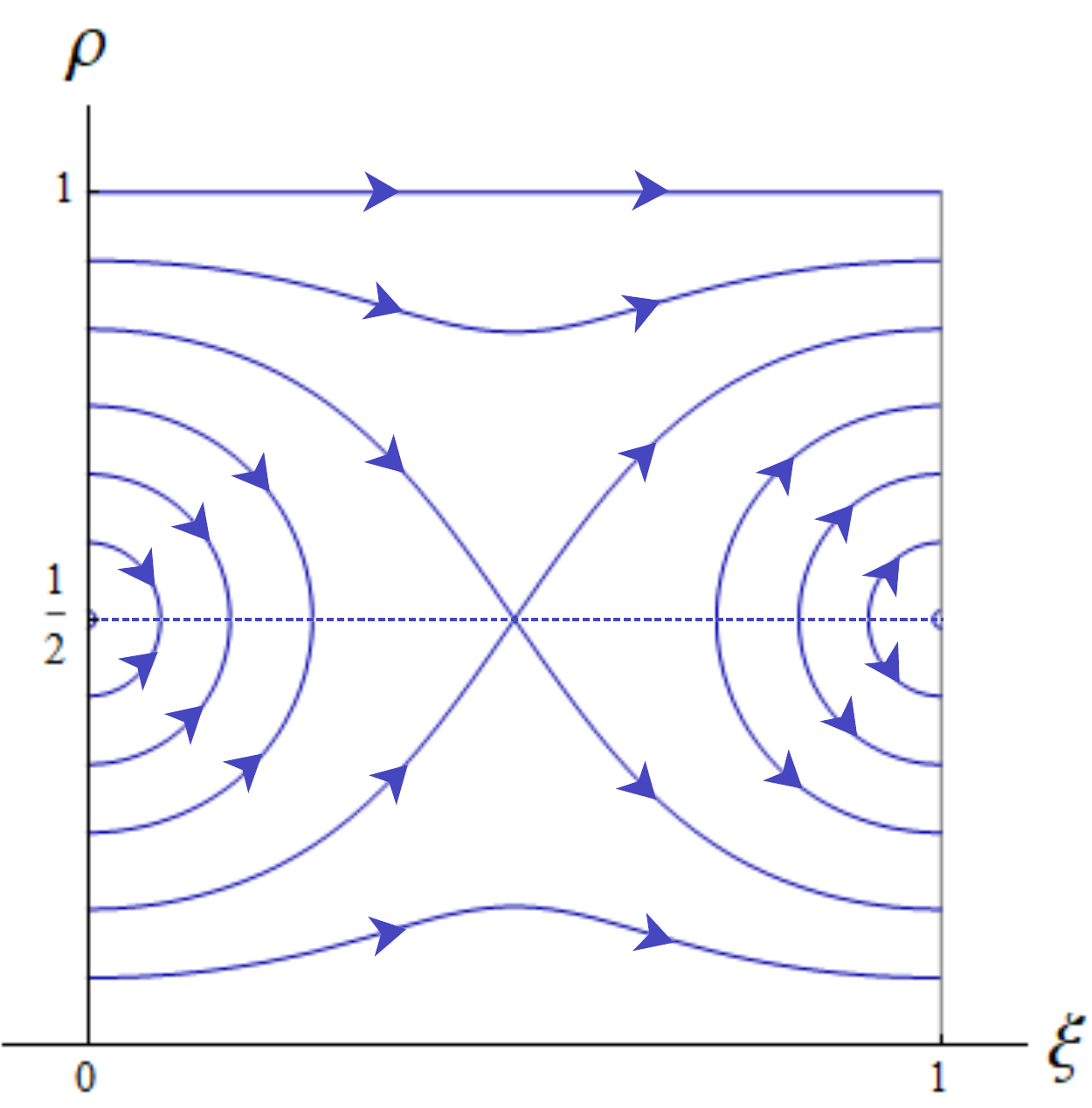}
    \caption{Reduced flow associated to Equations \eqref{eq:fosk}-\eqref{eq:bc_sf} with $k(x)=1+c_1\,\mathrm{cos}(2 \pi x)$ and $c_1=0.3$. The dashed line indicates the line of fold points located at $\rho=\frac12$. The point $(\frac 12, \frac 12)$ is a folded saddle.} 
    \label{fig:bottlenecks}
\end{figure}

\section{Numerical experiments}\label{sec:numerics}
We conclude by complementing our the analytical results by means of computational experiments. System \eqref{eq:reduced equation rho}-\eqref{eq:boundary conditions reduced rho} is quadratic in $\rho$, which makes Newton's method an ideal numerical approach. The nonlinear system is discretised using $P_1$  elements (polynomials of degree 1). The corresponding discrete nonlinear system is then solved using Newton's method. The solver is implemented using the finite element library \texttt{FEniCS}\cite{LoggMardalEtAl2012a,LoggWells2010a}. This proves sufficient for our purposes and we do not use discontinous Galerkin methods as in e.g.~\cite{burger_flow_2016}.

Newton's method requires a sufficiently good initial guess - which is not necessarily available for general functions $k$.  A suitable choice is important for small diffusitivies, where we expect the formation of sharp boundary layers.
By integration of \eqref{eq:reduced equation rho}, the following bound holds:
\begin{equation*}
    \| \partial_x \rho\|_2 \le c(\varepsilon) := \frac{\frac{\sqrt{L}\max k}{4} + \sqrt{\frac{L(\max k)^2}{16} + 4\alpha\varepsilon \min k \max k}}{2 \varepsilon \min k} \stackrel{\varepsilon \rightarrow \infty}{\longrightarrow} 0\,,
\end{equation*}
and we see that for sufficiently large values of $\varepsilon$ a constant initial guess is suitable.
We therefore propose and implement the following iterative procedure:
\begin{enumerate}
    \item Pick a sequence $\varepsilon_1 > ... > \varepsilon_n = \varepsilon$ with $\varepsilon_1 \gtrsim c^{-1}(1)$.
\item Set the initial guess $\rho_0 \equiv \frac{1}{2}$ and $i = 0$.
    \item Solve for $\rho_i$ as the numerical solution of \eqref{eq:reduced equation rho}-\eqref{eq:boundary conditions reduced rho} with $\varepsilon_i$ using Newton's method with initial guess $\rho_{i-1}$.
    \item Set $i = i+1$ and go to step 3 until $i=n$.
\end{enumerate}

\subsection{Comparison of the 2D model and the 1D area averaged model}

We start by comparing the solutions of the 1D model to the \emph{averaged} solutions of the 2D model.
In doing so we consider four different types of geometries, all defined on the interval $[0,1]$:
\begin{enumerate}[(a)]
    \item a straight corridor, where $k \equiv 1$,
    \item a closing corridor, where $k(x) = 2-x$,
    \item a piecewise constant narrowing corridor, where $k(x) = 2 - \mathbbm{1}_{[\frac{1}{2},1]}(x)$
    \item a bottleneck, where $k(x) = 2 - \mathbbm{1}_{[\frac{1}{3}, \frac{2}{3}]}(x)$.
\end{enumerate}
We emphasise the fact that the choices (c) and (d) of $k$ correspond to a width-to-length ratio of $2:1$, which does not agree with the assumptions of the proposed 1D approximation because of their lack of smoothness.
%a rather adverse value for our proposed 1D approximation.

The choice of the vector  $\textbf{u}$ can have a strong impact on the stationary profiles, and thus on the quality of the 1D approximation. To illustrate this, we discuss two different possibilities for $\mathbf{u}$ in the following. In the first instance, $\mathbf{u} = \mathbf{u}_e$ is given by the solution of the eikonal equation, see \cite{CPT2014}. In particular $ \textbf{u}_e = \nabla \phi$ where $\phi$ solves
\begin{align*}
    |\nabla \phi| &= 1 \qquad \text{ for } x \in \Omega\,,
    \\
    \phi &= 0 \qquad \text{ for } x \in \Sigma\,.
\end{align*}
The value of the potential $\phi = \phi(x,y)$ then corresponds to the distance from $(x,y)$ to the exit, with the flow of $\mathbf{u}_e$ corresponding to the shortest path. This choice is reasonable for single individuals but the corresponding streamlines tend to collide at concave corners. This can lead to locally higher densities.
In the second instance, $\mathbf{u} = \mathbf{u}_L$, with $\mathbf{u}_L = \nabla \psi/|\nabla \psi|$, where $\psi$ solves
\begin{align*}
    -\Delta \psi &= 0 \, \, \, \, \qquad \text{ for } x\in \Omega\,,
    \\
    \nabla \psi \cdot n &=
    \begin{cases} -1 &\text{ for } x\in\Gamma\,,
    \\
    1 &\text{ for } x\in\Sigma\,,
    \\
    0 &\text{ for } x\in\partial\Omega\setminus{\Gamma\cup\Sigma}\,.
    \end{cases}
\end{align*}
This choice was used in \cite{burger_flow_2016} and also by Piccoli and Tosin \cite{PT2009}, albeit with Dirichlet boundary conditions. The flow does not correspond to geodesics in this case, and streamlines make a smooth arc around corners.

The results are presented in Table~\ref{tab:profile comparison}, where we show the solution to the one-dimensional model \eqref{eq:reduced equation rho}, and the solutions to the two-dimensional model \eqref{eq:original_system}, averaged along the $y$ direction. The dotted line corresponds to $\mathbf{u} = \mathbf{u}_e$, the dashed line to $\mathbf{u} = \mathbf{u}_L$. For case (a), $\mathbf{u} = (1, 0)^T$ for both the eikonal and the Laplace equation. Since the solution of the 2D problem does not depend on $y$, the 1D approximation is exact. As expected, solutions match in this case.\\
Generally speaking, the 1D approximation is similar or better for the Laplace field i.e.~when $\mathbf{u} = \mathbf{u}_L$. When $\mathbf{u} = \mathbf{u}_e$, there are large discrepancies, in particular in the low density regime. In this case, the 2D solution presents significantly higher densities ahead of narrower sections. One possible explanation is the collision of streamlines on the edge of these narrow sections.
The low density regime ($\rho < \frac{1}{2}$) shows a boundary layer near $x=1$, while in the high density regime ($\rho > \frac{1}{2}$), it is close to $x=0$.

When $k$ is not smooth, that is cases (c) and (d), there is a noticeable discrepancy between the 1D solution and the averaged 2D solution, also for the Laplace field.

In the low density regime, no boundary layer is present close to $x=0$, so that one can expect the value of $\rho(0)$ (and thus of $\rJ \simeq k(0) \rho(0) \left(1-\rho(0)\right)$) to be given by the boundary condition: $\rho \simeq \alpha$. A similar statement holds true for the high density regime at the right boundary. This explains why for both regimes the value of $\rJ$ is very similar across the chosen geometries, if one takes into account the difference in width at the boundary: in (a) the width is $1$ on the left, as opposed to $2$ for other geometries. Similarly, (d) has a width of $2$ on the right instead of $1$.\\
Finally, we observe that in cases (c) and (d), in which the width is piecewise constant, the profile of $\rho$ on each segment is similar to one of the three found in the case (a) where $k$ is constant. This can be explained by the fact that $\rho$ solves an equation similar to \eqref{eq:rho constant width} on each segment - however, with appropriate interface conditions. The profiles are then given by $\T$. An interesting problem is how to define the correct interface conditions. We leave this question for further research. 

\begin{table}
    \centering
    \begin{tabular}{cccccc}
        \toprule
        &\multicolumn{2}{c}{\textbf{Geometry}} & \multicolumn{3}{c}{\textbf{Regime}}
        \\
                                              &\multicolumn{2}{c}{Streamlines} & Low density & High density & High flux \\
                                                    &$\mathbf{u}_e$ & $\mathbf{u}_L$ & $\alpha=0.05$, $\beta=0.2$ &
        $\alpha=0.3$, $\beta=0.1$ &
        $\alpha=0.8$, $\beta=0.8$
        \\ \midrule
        \raisebox{1.6cm}{(a)}
                                  &
        \raisebox{1.3cm}{
    \includegraphics{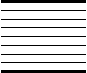}} &
        \raisebox{1.3cm}{
            \includegraphics{tikz/profile_table-figure0.pdf}
        }
        &
        \includegraphics{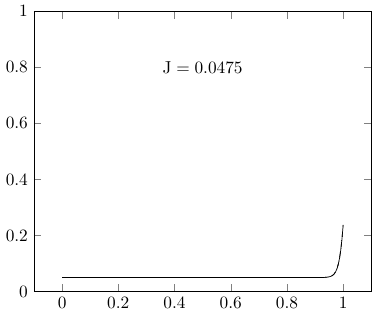}
        &
        \includegraphics{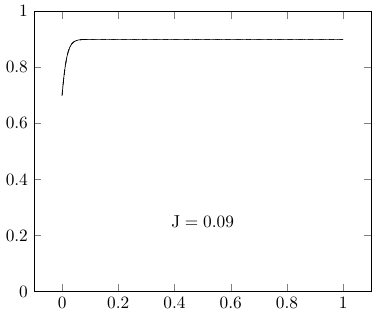}
        &
        \includegraphics{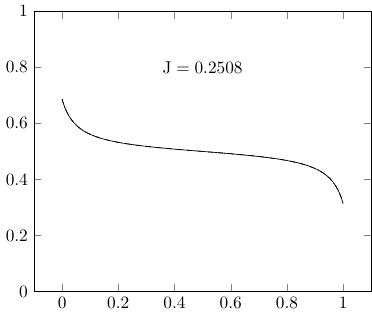}
        \\
    \raisebox{1.6cm}{(b)} &
        \raisebox{1.3cm}{
\includegraphics{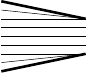}} &
        \raisebox{1.3cm}{
        \includegraphics{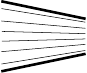}
    }
        &
        \includegraphics{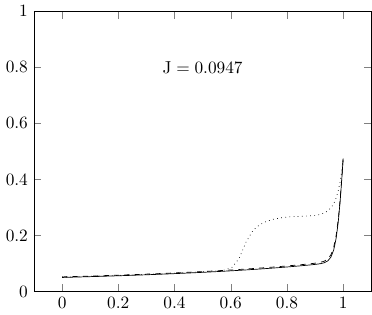}
        &
        \includegraphics{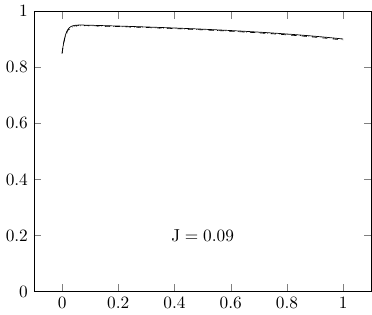}
        &
        \includegraphics{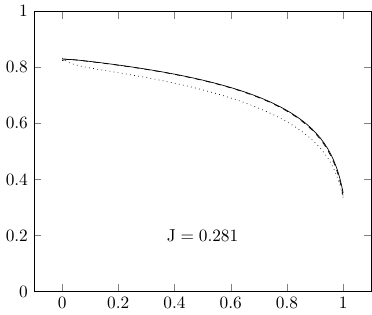}
        \\
   \raisebox{1.6cm}{(c)} &
        \raisebox{1.3cm}{
   \includegraphics{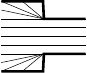}} &
        \raisebox{1.3cm}{
        \includegraphics{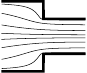}
    }
        &
        \includegraphics{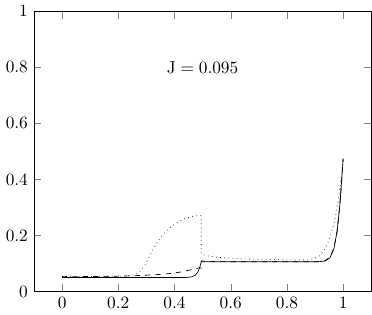}
        &
        \includegraphics{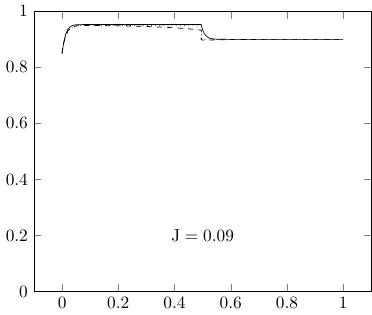}
        &
        \includegraphics{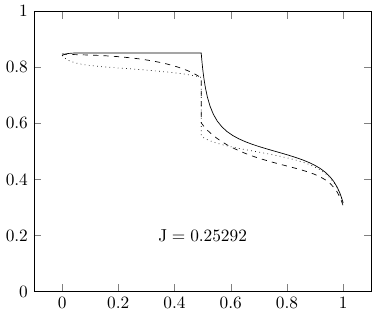}
        \\
   \raisebox{1.6cm}{(d)} &
        \raisebox{1.3cm}{
   \includegraphics{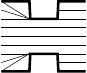}} &
        \raisebox{1.3cm}{
        \includegraphics{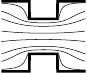}
    }
        &
        \includegraphics{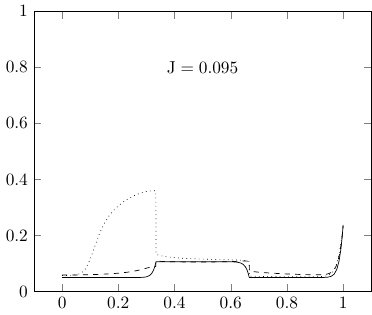}
        &
        \includegraphics{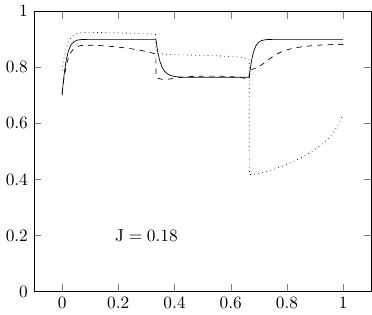}
        &
        \includegraphics{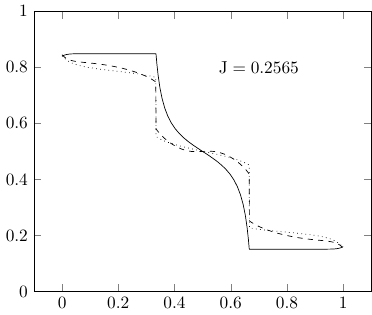}
        \\ \bottomrule
    \end{tabular}
    \caption{Comparison of the stationary profiles for different geometries and different regimes, with $\varepsilon = 10^{-2}$. The first column shows both $\mathbf{u}_e$ (left) and $\mathbf{u}_L$ (right). Legend: Solid lines refer to solutions of \eqref{eq:reduced equation rho}, dotted lines indicate the $y$-average of the solution to \eqref{eq:original_system} for $\mathbf{u} = \mathbf{u}_e$, and dashed lines represent the $y$-average of the solution to \eqref{eq:original_system} for $\mathbf{u} = \mathbf{u}_L$.}
    \label{tab:profile comparison}
\end{table}

\subsection{Singular orbits vs.~viscous profiles}

The GSPT analysis of Section~\ref{sec:vanvis} provides us with limiting profiles as $\varepsilon \rightarrow 0$. We compare them with the solutions of \eqref{eq:reduced equation rho}-\eqref{eq:boundary conditions reduced rho} for small $\varepsilon$ in the case $k(x) = 2-x$.\\
We start with a quantitative comparison for a specific choice of $(\alpha,\beta)$ in Figure~\ref{fig:regI}, which shows the convergence of the solution in the inflow limited regime to a singular solution of type 1.
\begin{figure}[!h]
     \centering
     \raisebox{0.1cm}{\includegraphics[width=0.485\textwidth]{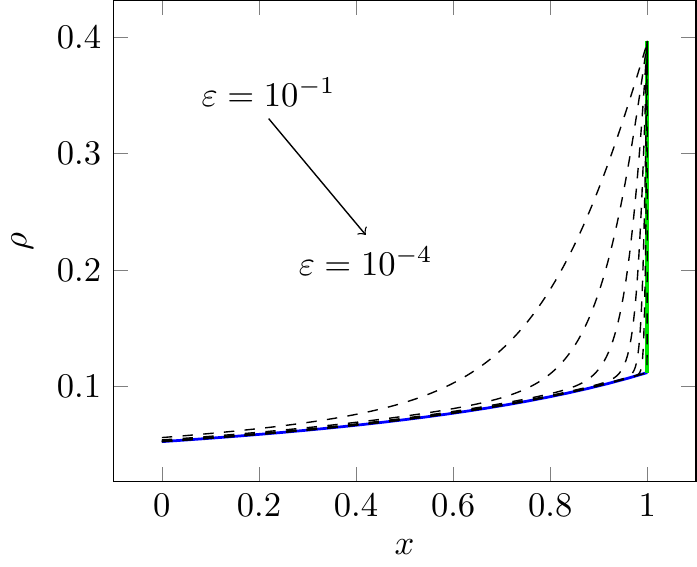}}
     \hfill
     \includegraphics[width=0.475\textwidth]{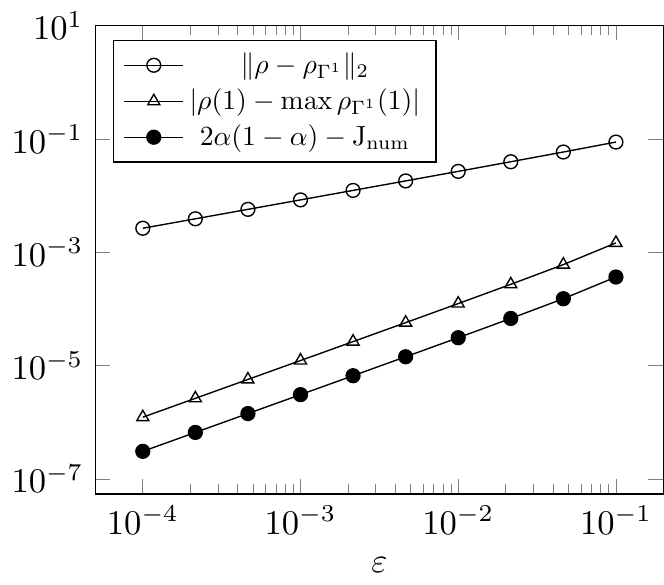}
     \caption{Stationary profiles for the particular choices of parameters $(\alpha,\beta) = (5.2\times 10^{-2}, 2.5\times 10^{-1})$, which corresponds to region $\mathcal{G}_1$. \emph{Left:} numerical solutions for various values of $\varepsilon$ approaching the density profile given by the singular orbit $\rho_{\Gamma^1}$. \emph{Right:} Difference between $\rho_{\Gamma^1}$ and the sequence of numerical solutions as $\varepsilon$ decreases: ($\circ$) $L^2$-error, ($\triangle$) pointwise error at $x=1$, i.e.~the difference between $\rho(1)$ and the maximum value of the boundary layer given by the singular orbit, (\textbullet) limiting flux vs numerical flux $\rJ_\mathrm{num}$. We observe that the error at the boundary is roughly of order $O(\varepsilon)$, and as expected, the convergence in $L^2$ norm is of order roughly $O(\varepsilon^\frac{1}{2})$, due to the boundary layer.}
     \label{fig:regI}
 \end{figure}

 The convergence of solutions towards the singular orbits as given by the GSPT is not uniform w.r.t. $(\alpha, \beta)$. This implies that a direct computation of the phase diagram (Figure \ref{fig:bd_sing}) for a fixed $0 < \varepsilon \ll 1$ is not possible. Therefore, we perform a qualitative comparison across the $(\alpha,\beta)$ parameter space, by looking at the phase diagrams both in the case $\varepsilon = 10^{-3}$ and $\varepsilon = 0$.
The results for these computational experiments are shown in Figure~\ref{fig:phase diagram funnel diffusive}. There is good qualitative agreement between the two cases, in particular
the location and variation of boundary layers (increasing or decreasing) match in all regimes. There are also no boundary layers visible
in the numerical solution for $\varepsilon > 0$ at boundary curves $\gamma_{ij}$, except between $\mathcal{G}_3$ and $\mathcal{G}_4$ where a right boundary layer is present,
as is the corresponding singular case. This indicates that the singular phase diagram provides a good description of the behaviour for $0 < \varepsilon \ll 1$.

\begin{figure}[h]
 \begin{center}
 \includegraphics[width=0.7\textwidth]{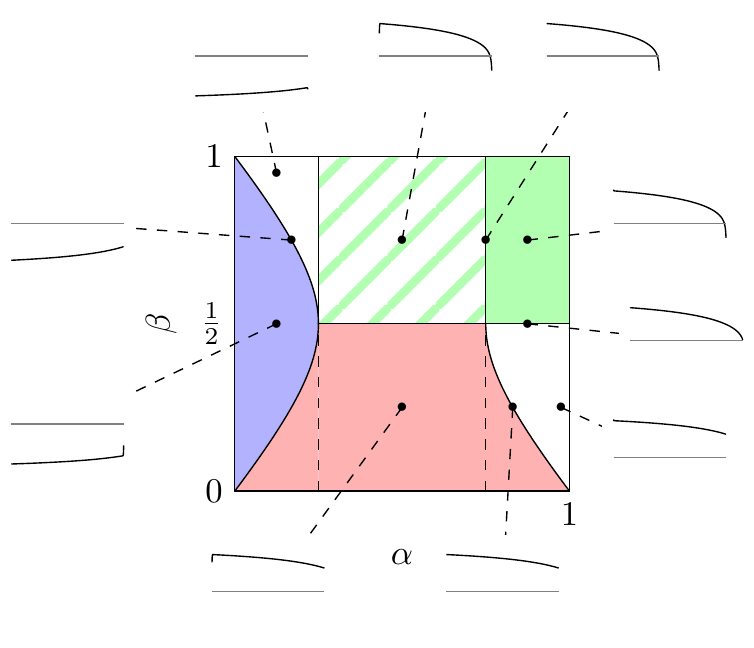}
 \end{center}
 \caption{
 Phase diagram showing different profiles $\rho$ of \eqref{eq:reduced equation rho}-\eqref{eq:boundary conditions reduced rho} for $k(x) = 2 - x$ and $\varepsilon = 10^{-3}$. The underlying regions in the $(\alpha,\beta)$ space are the ones defined for the singular limit $\varepsilon = 0$.
 The 4 cases in the top right-hand corner (green and green cross-hatched regions) involve the fold line $S$ and present a right boundary layer which is not as sharp as cases away from $S$, for example in the blue and red areas. The gray line indicates the value $\rho= \frac{1}{2}$. We refer to Remark \ref{r:layers} for detailed comments.
 }
 \label{fig:phase diagram funnel diffusive}
\end{figure}

\begin{remark}\label{r:layers}
We now give a brief interpretation of the bifurcation diagram in Figure \ref{fig:phase diagram funnel diffusive} in terms of pedestrian dynamics. Within each regime we either observe low or high-density densities, that is above or below the value $\frac{1}{2}$. The width of the boundary layers at the entrances and exits depends on the diffusivity, their height on the respective inflow and outflow rates. We observe high density regimes if the inflow rate is larger than the outflow (red region in \ref{fig:phase diagram funnel diffusive}) with a sharp increase close to the entrance. High outflow rates relate to a fast drop at the exit, shown in $\mathcal{G}_2$, $\mathcal{G}_3$ and $\mathcal{G}_4$.\\
In practice, the location of the boundary layers as well as the density regime are most relevant. The former allows to draw conclusions about the formation of congested (and potentially dangerous) regimes, while the latter can be used to estimate the capacity of rooms.
\end{remark}
\subsection{Phase diagrams for geometries with bottlenecks}
In this section we investigate domains which have a narrow midsection.
In particular we consider the following situations:
\begin{enumerate}
    \item[(d)] A piecewise constant bottleneck, that is $k(x) = 2 - \mathbbm{1}_{[\frac{1}{3}, \frac{2}{3}]}(x)$, as already defined above.
    \item[(d')] A smooth bottleneck
        \begin{equation*}
            k(x) = \begin{cases}
                2 & \text{for } x \in [0, \tfrac{1}{3}) \cup (\tfrac{2}{3}, 1]\,,
                \\
                c_1 + c_2 \cos\left(3 \cdot 2 \pi(x-\frac{1}{2})\right)& \text{otherwise}\,,
            \end{cases}
        \end{equation*}
        where the parameters $(c_1,c_2)$ are chosen such that $k\left(\tfrac{1}{3}\right) = k\left(\tfrac{2}{3}\right) = 2$, $k\left(\tfrac{1}{2}\right) = 1$, $k'\left(\tfrac{1}{2}\right) = 0$. This choice of $k$ differs from the one in \eqref{eq:k_smooth}, to better isolate boundary layers.
\end{enumerate}

We note that the general behaviour of stationary states is comparable for (d) and (d'). In summary, we observe (see Figure~\ref{fig:phase diagram cosine}):
\begin{enumerate}[$(O_1)$]
    \item \label{it:low to high alpha}  The density $\rho$ transitions out of the low density regime very quickly as $\alpha$ increases slightly: for this choice of parameters, $\rho$ is very sensitive on the value of $\alpha$.
    \item \label{it:high flux} The density $\rho$ takes values larger (resp.~smaller) than $\tfrac{1}{2}$ on the left (resp.~right) of the interval. The boundary layers at the endpoints can either be increasing or decreasing.
    \item \label{it:low to high beta} The density $\rho$ transitions to the high density regime very quickly as $\beta$ decreases slightly: $\rho$ depends very sensitively on $\beta$.
    \item \label{it:high density boundary change monotonicity alpha}
        In the high density regime, the left boundary layer changes from increasing to decreasing as $\alpha$ increases.
    \item \label{it:diagonal} Along the diagonal $\alpha = \beta$, the boundary layer moves to the centre of the domain as $\alpha$ and $\beta$ decrease.
    \item \label{it:low density boundary change monotonicity beta}
        In the low density regime, the right boundary layer changes from increasing to decreasing as $\beta$ increases.
\end{enumerate}

Loosely speaking, the behaviour around \ref{it:high density boundary change monotonicity alpha} and \ref{it:low density boundary change monotonicity beta} is comparable with the one in a straight or a  closing corridor, corresponding to high density and low density regimes, respectively. They
relate to the reduced dynamics restricted to the attractive and repulsive manifolds $\mathcal{C}_0^a$ and $\mathcal{C}_0^r$, respectively.\\
In the upper-right quadrant \ref{it:high flux}, we observe a transition from $\mathcal{C}_0^a$ on the left of the interval, to $\mathcal{C}_0^r$ on the right, which cannot happen for 
$k'<0$.
This hints at the involvement of the set $S$ of critical points of $\mathcal{C}_0$, which complicate the analysis.
How this regime appears from the high/low density regime \ref{it:high density boundary change monotonicity alpha} and \ref{it:low density boundary change monotonicity beta} is shown in groups \ref{it:low to high beta} and \ref{it:low to high alpha}.\\
Finally, group \ref{it:diagonal} shows how starting from situation \ref{it:high flux} for $\alpha = \beta = \frac{1}{2}$, the boundary layers migrate to the middle of the interval as the value of $\alpha$ decreases and eventually meet. Here, the behaviour is not well defined in the limit $\varepsilon = 0$, as these are, in fact, the regimes such that canard orbits emerge, i.e.~orbits which transition in or near repelling slow manifolds for long times.
This claim is further supported by the observed numerical degeneracy in the corresponding numerical experiments. 

\begin{figure}[h]
    \begin{center}
    \includegraphics[width=0.45\textwidth]{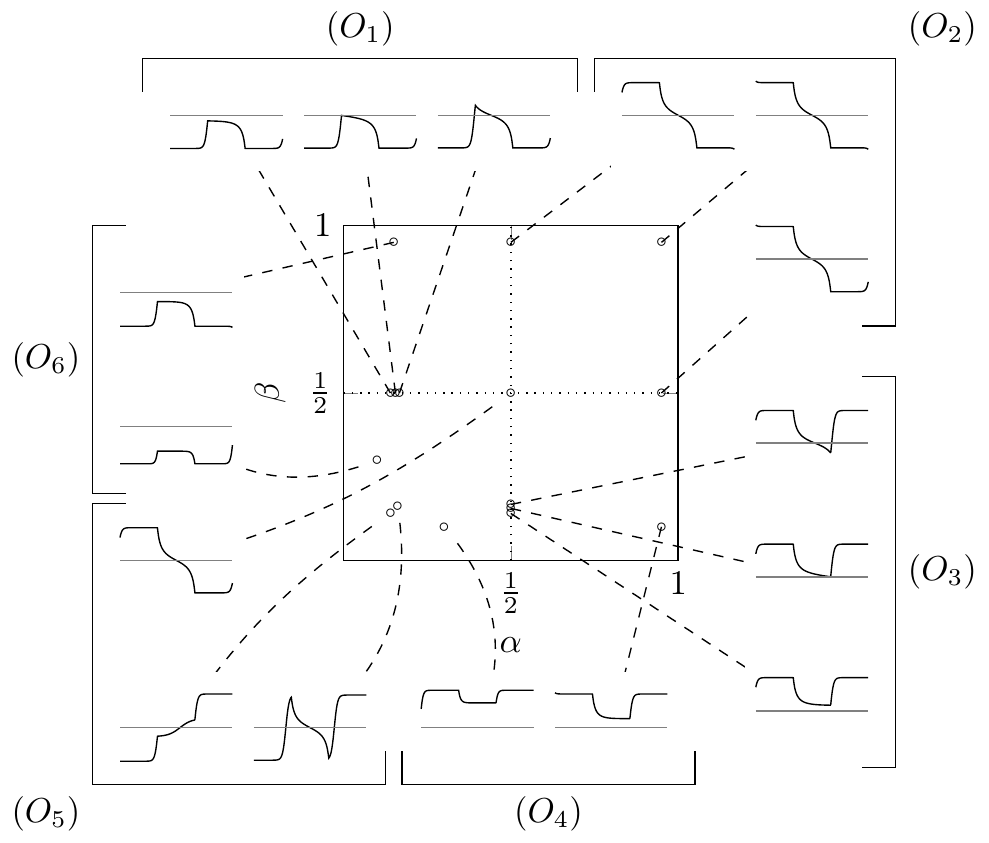}
    \hfill
    \includegraphics[width=0.45\textwidth]{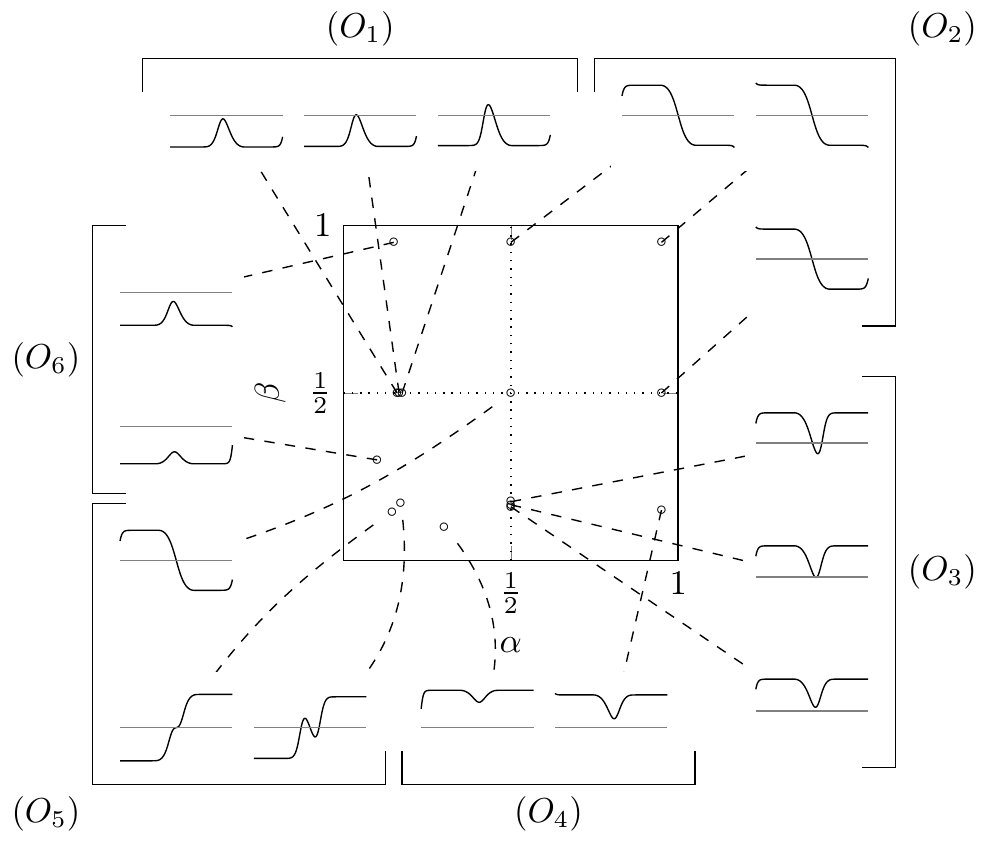}
    \\
    \hfill (d) \hfill\hfill\hfill (d') \hfill~
    \end{center}
    \caption{Phase diagrams for the cases (d) and (d'). Around the central plot, the $\rho$ profiles corresponding to a particular choice of $\alpha$ and $\beta$ are shown. The horizontal gray line highlights the value $\frac12$. The parameter $\varepsilon$ is set to $10^{-2}$.
    }
    \label{fig:phase diagram cosine}
\end{figure}

\section{Conclusion} \label{sec:conclusion}
In this paper, we analyse the stationary profiles of a mean-field model for unidirectional pedestrian flows for different inflow and outflow boundary conditions and geometries. The considered 1D model is based on averaging the pedestrian flow over the cross section of the domain and corresponds to a viscous Burgers' type equation with nonstandard boundary conditions. Its stationary profiles exhibit boundary layers at entrances and exits depending on the inflow and outflow rates. We use PDE techniques as well as GSPT to analyse the structure and location of these layers and support our findings with numerical experiments. These results are valid in straight and closing or opening domains. However, systematic computational experiments as well as preliminary analytic results for more general domains indicate that our strategy can be further developed to cover the case of bottlenecks. The numerical results illustrate the stationary profiles associated to more complex domains, such as corridors of
piecewise constant width.

These can be viewed as a concatenation of the rigorously analysed stationary straight corridor profiles with suitable, but currently unknown, interface conditions. Furthermore, we discuss the approximation quality of the 1D area averaged model for a range of inflow and outflow conditions as well as computational domains. It is shown to depend strongly on the choice of the preferred direction $\mathbf{u}$ for the 2D model. Our experiments show that solutions corresponding to the vector field given by solving a Laplace equation are well approximated, even if the width of the domain is large with respect to the length. In particular, the approximation can be much better than when using the eikonal equation to compute the vector field $\mathbf{u}$.

Future research will focus on open modelling and analysis questions. First, we wish to extend our analysis to domains where the function $k(x)$ has a vanishing derivative (such as bottlenecks). In this situation, we are particularly interested in solutions involving canard segments, which emerge for some parameter regimes and cross the line of fold points at a folded singularity. Moreover, using GSPT, we plan to further investigate the critical case where
an interior layer occurs, the location of which is not known for $\varepsilon=0$, that is 
for $(\alpha,\beta)$ on the curve $\gamma_{15} \cup \gamma_{23}$. Another interesting perspective lies in model development and calibration - we have seen that the area averaged 1D model approximates the full 2D model in certain parameter regimes very well, but fails in others. These shortcomings are caused by the underlying averaging assumption as well as the boundary conditions. We aim to investigate suitable scaling and modelling alternatives in the future. \vspace*{1em}\\

\noindent \textbf{Acknowledgements} AI, GJ and MTW acknowledge partial support from the New Frontier's grant NST-0001 of the Austrian Academy of Sciences \"OAW. MTW thanks Christian Schmeiser for the interesting discussion, which initiated the collaboration with PS and AI on GSPT.\\

\bibliography{ms}{}
\bibliographystyle{abbrv}

\appendix

\section{Geometry of singular solutions of type 2-6} \label{app:sspl}

\begin{figure}[H] 
 \centering
 \begin{minipage}{.3\textwidth}
  \centering
  \def\svgwidth{1\textwidth}
  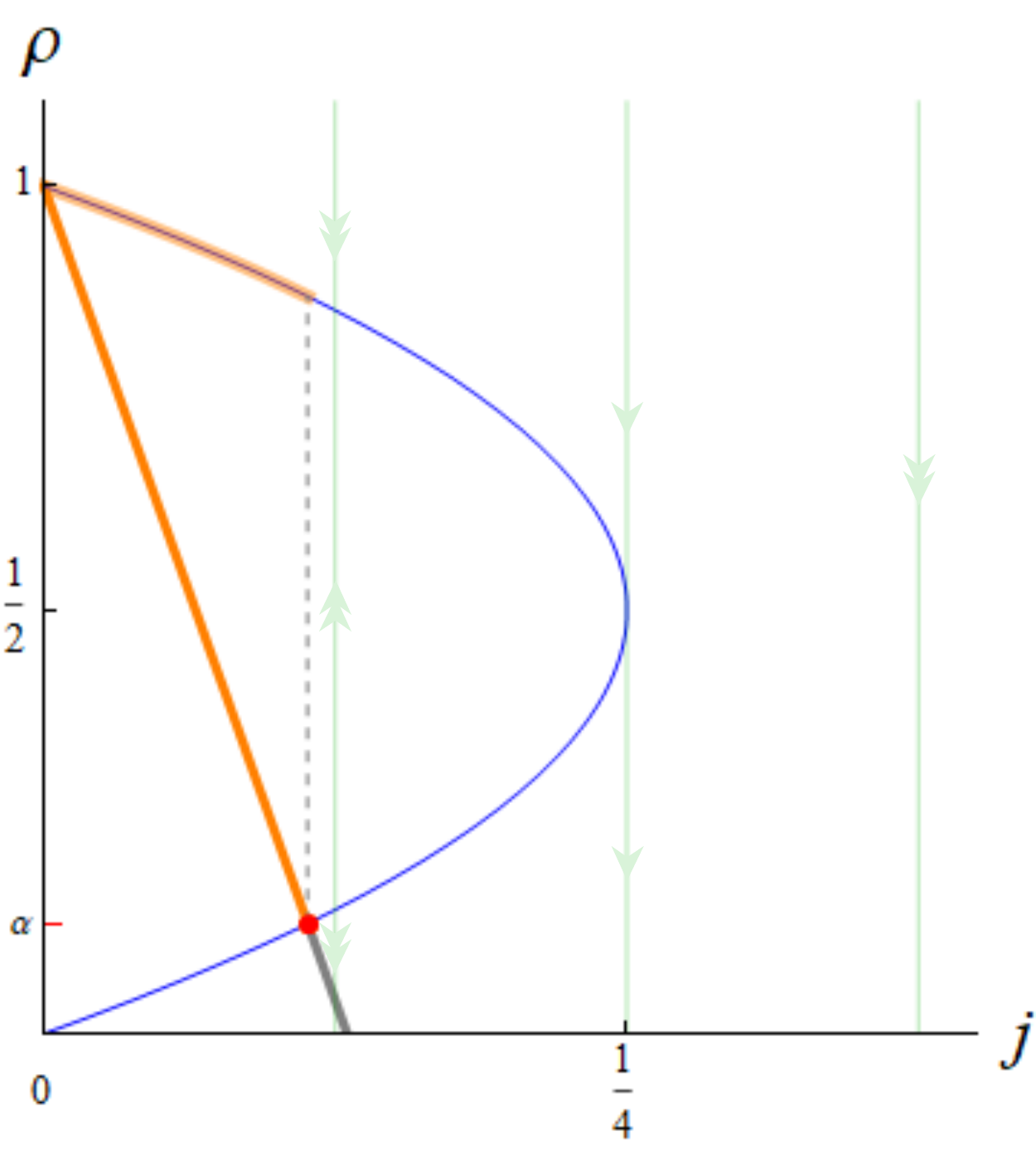\\
  (a) $\xi=0$
 \end{minipage}
 \hspace{.5cm}
 \begin{minipage}{.3\textwidth}
  \centering
  \def\svgwidth{1\textwidth}
  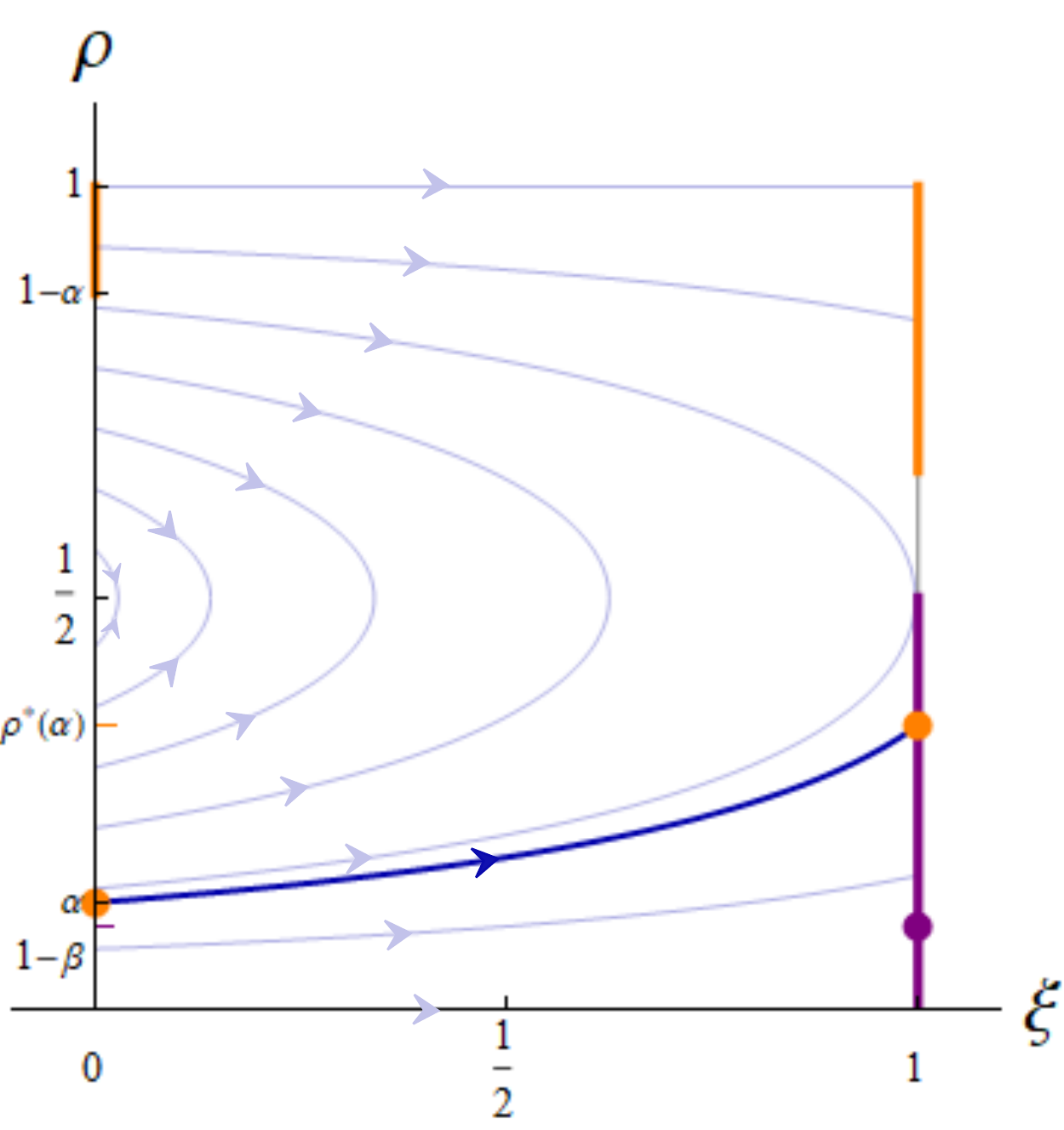\\
  (b) $\xi \in [0,1]$
 \end{minipage}
 \hspace{.5cm}
 \begin{minipage}{.3\textwidth}
  \centering
  \def\svgwidth{1\textwidth}
  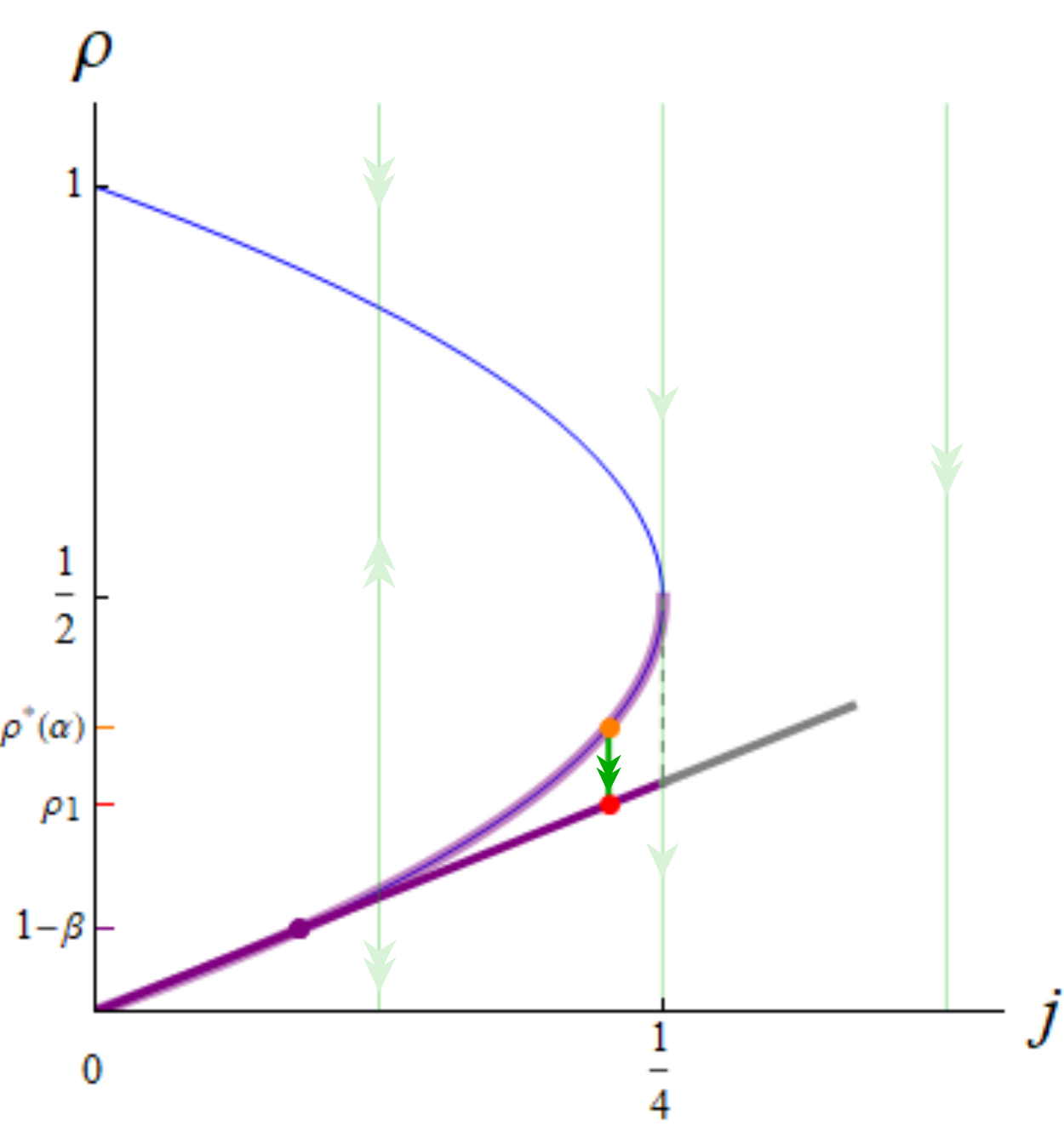\\
  (c) $\xi=1$
 \end{minipage}
  \begin{minipage}{.3\textwidth}
  \centering
  \def\svgwidth{1\textwidth}
  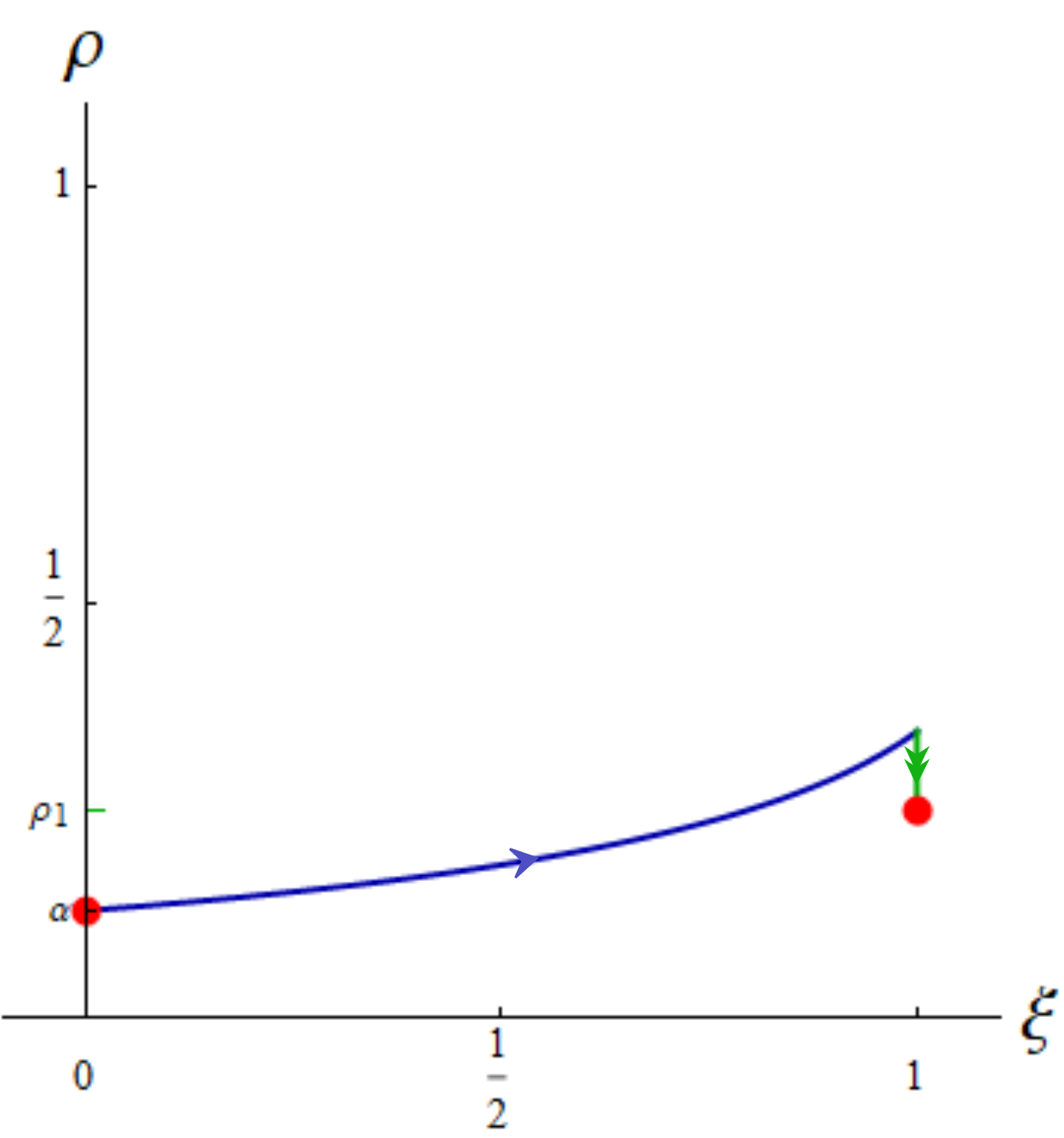\\
  (d) 
 \end{minipage}
 \caption{Schematic representation of a singular solution of type 2. (a) Boundary conditions at $\xi=0$ in $(j,\rho)$-space: the orange line is $\mathcal{L}$, while the orange curve is $\mathcal{L}^+$. The red dot represents $p_0$, where in this case $p_0=l$. (b) Slow evolution on $\mathcal{C}_0$ from $(0,\alpha)$ to $(1,\rho^\ast(\alpha))$ (blue curve). The orange lines are the projection of $\mathcal{L}$ (at $\xi=0$) and $\mathcal{L}^+$ (at $\xi=1$) on $\mathcal{C}_0$, while the purple one represents the projection of $\mathcal{R}^-$ on $\mathcal{C}_0$ at $\xi=1$. The orange dots correspond to $l$ (at $\xi=0$) and $l_1$ (at $\xi=1$), while the purple dot corresponds to $r$. (c) Here, we consider $\xi=1$ in $(j,\rho)$-space. The red dot corresponds to $p_1$, while the purple line and curve represent the manifolds $\mathcal{R}$ and $\mathcal{R}^-$, respectively. The green line corresponds to the layer of the singular orbit where $\rho$  decreases from $\rho^\ast(\alpha)$ to $\rho_1=\frac{\alpha(1-\alpha)k(0)}{\beta k(1)}$. (d) Singular solution of type 2 in $(\xi,\rho)$-space.}
 \label{fig:singsol_2}
 \end{figure}

 \begin{figure}[H] 
 \centering
 \begin{minipage}{.3\textwidth}
  \centering
  \def\svgwidth{1\textwidth}
  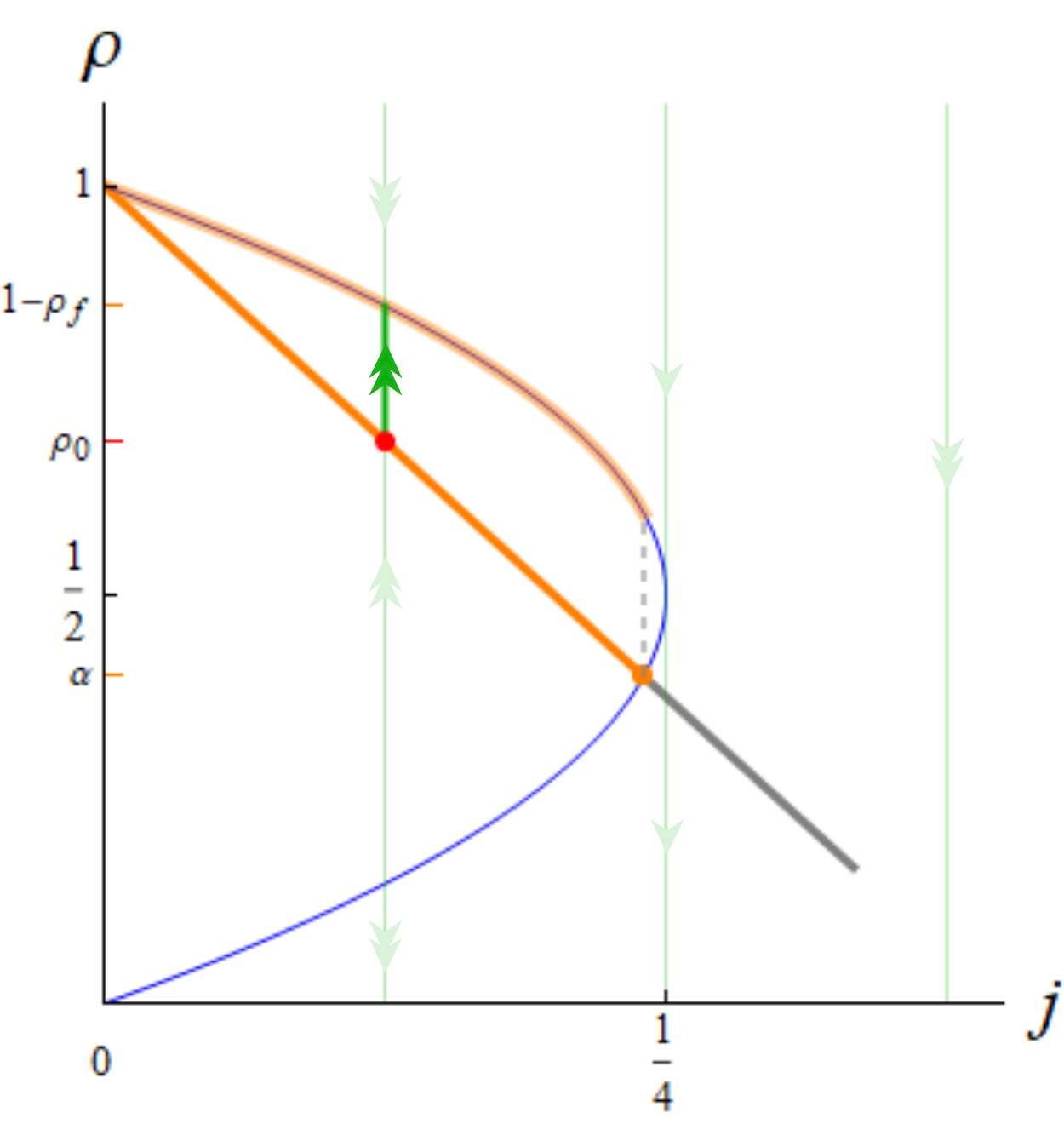\\
  (a) $\xi=0$
 \end{minipage}
 \hspace{.5cm}
 \begin{minipage}{.3\textwidth}
  \centering
  \def\svgwidth{1\textwidth}
  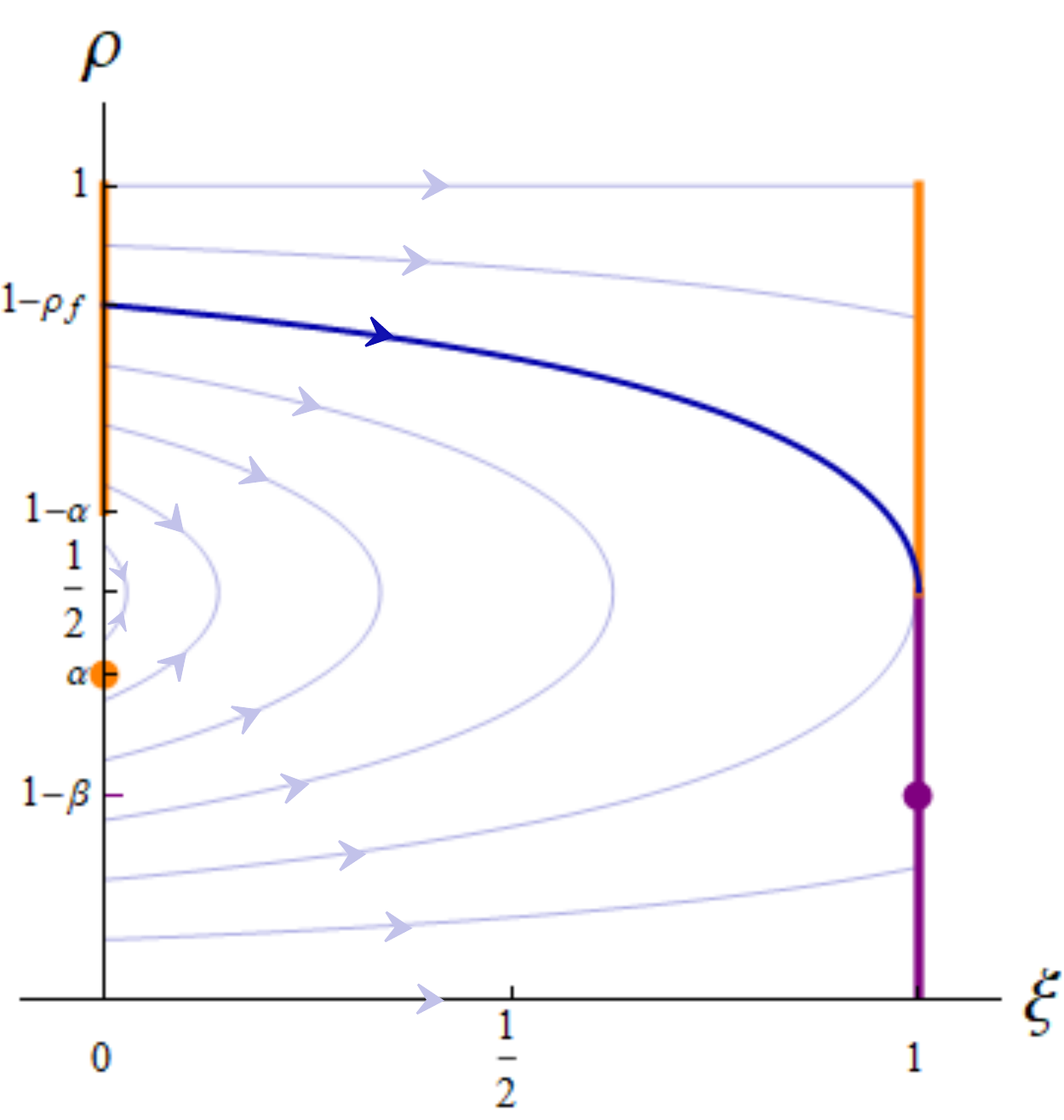\\
  (b) $\xi \in [0,1]$
 \end{minipage}
 \hspace{.5cm}
 \begin{minipage}{.3\textwidth}
  \centering
  \def\svgwidth{1\textwidth}
  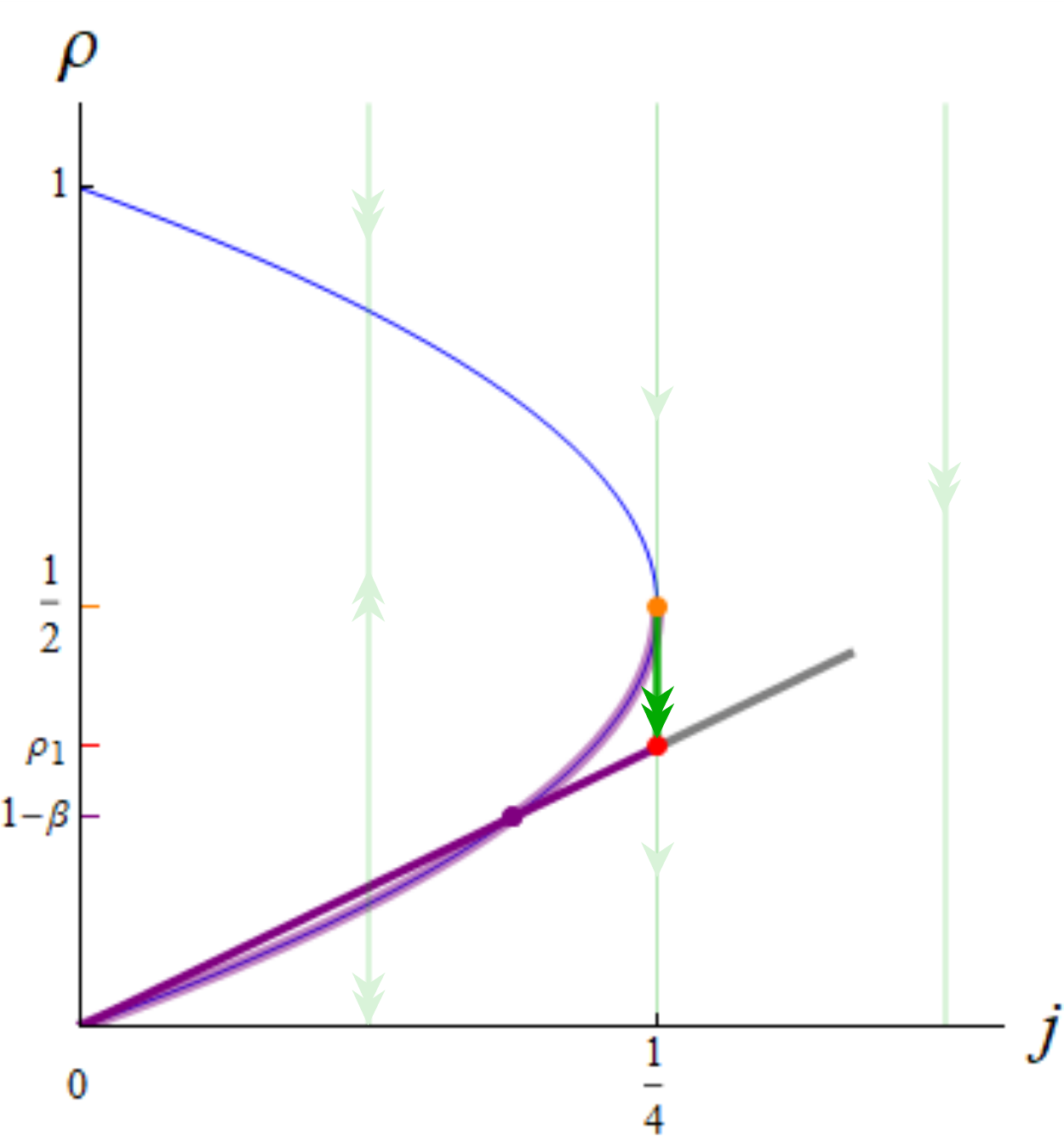\\
  (c) $\xi=1$
  \end{minipage}
  \begin{minipage}{.3\textwidth}
  \centering
  \def\svgwidth{1\textwidth}
  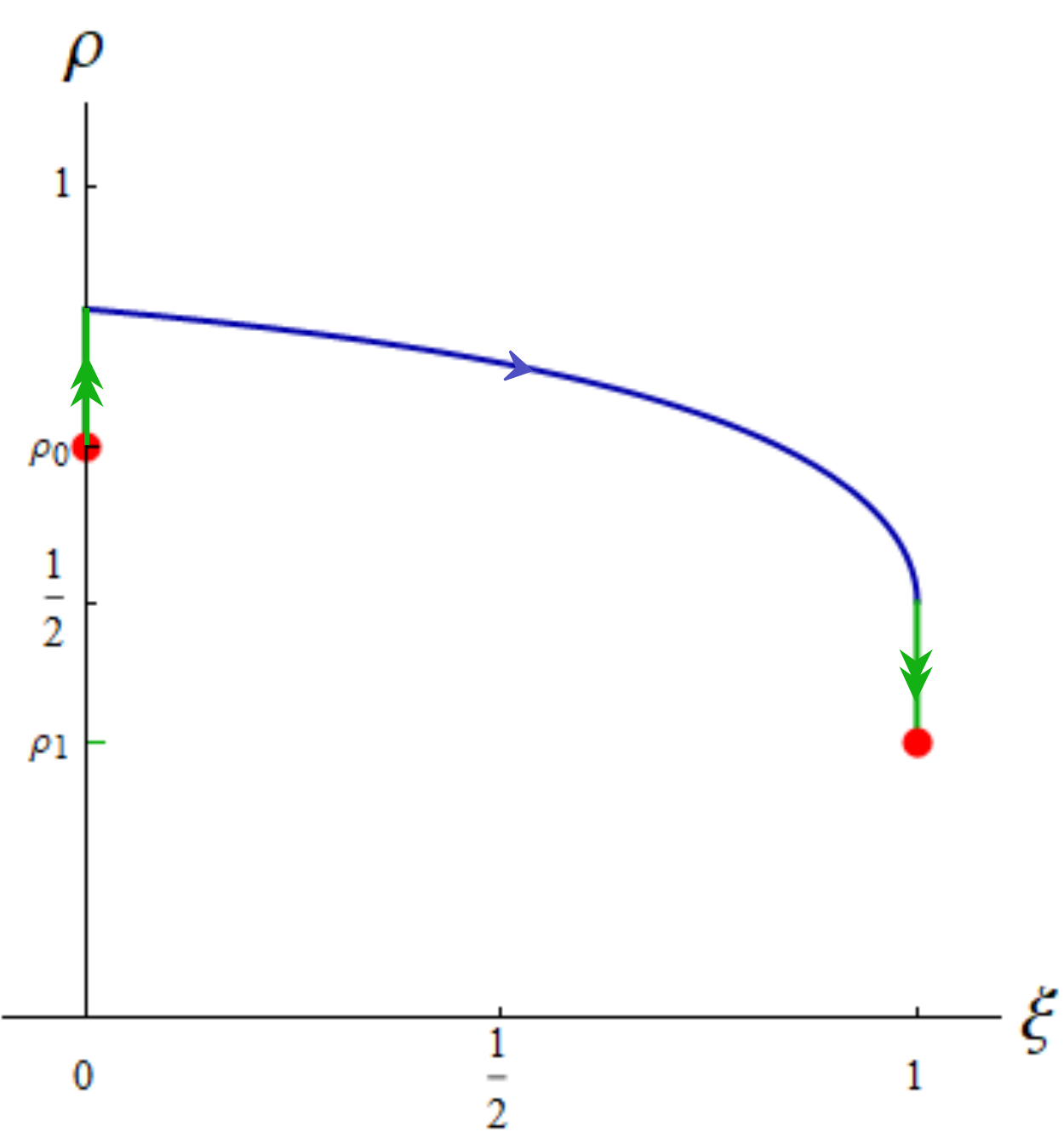\\
  (d) 
 \end{minipage}
 \caption{Schematic representation of a singular solution of type 3. (a) Boundary conditions at $\xi=0$ in $(j,\rho)$-space: the orange line is $\mathcal{L}$, while the orange curve is $\mathcal{L}^+$. The red dot represents $p_0$, and the green line illustrates the layer at $\xi=0$ where $\rho$ increases from $\rho_0=1-\frac{k(1)}{4\alpha k(0)}$ to $\rho(0,\rho_0)=1-\rho_f$. (b) Slow evolution on $\mathcal{C}_0$ from $(0,1-\rho_f)$ to $(1,\frac12)$ (blue curve). The orange lines are the projection of $\mathcal{L}$ (at $\xi=0$) and $\mathcal{L}^+$ (at $\xi=1$) on $\mathcal{C}_0$, while the purple one represents the projection of $\mathcal{R}^-$ on $\mathcal{C}_0$ at $\xi=1$. The orange dots correspond to $l$ (at $\xi=0$) and $l_1$ (at $\xi=1$), while the purple dot corresponds to $r$. (c) Here, we consider $\xi=1$ in $(j,\rho)$-space. The red dot corresponds to $p_1$, while the purple line and curve represent the manifolds $\mathcal{R}$ and $\mathcal{R}^-$, respectively. The green line corresponds to the layer of the singular orbit where $\rho$ decreases from $\frac12$ to $\rho_1=\frac{1}{4\beta}$. (d) Singular solution of type 3 in $(\xi,\rho)$-space.}
 \label{fig:singsol_3}
 \end{figure}

 \begin{figure}[H] 
 \centering
 \begin{minipage}{.3\textwidth}
  \centering
  \def\svgwidth{1\textwidth}
  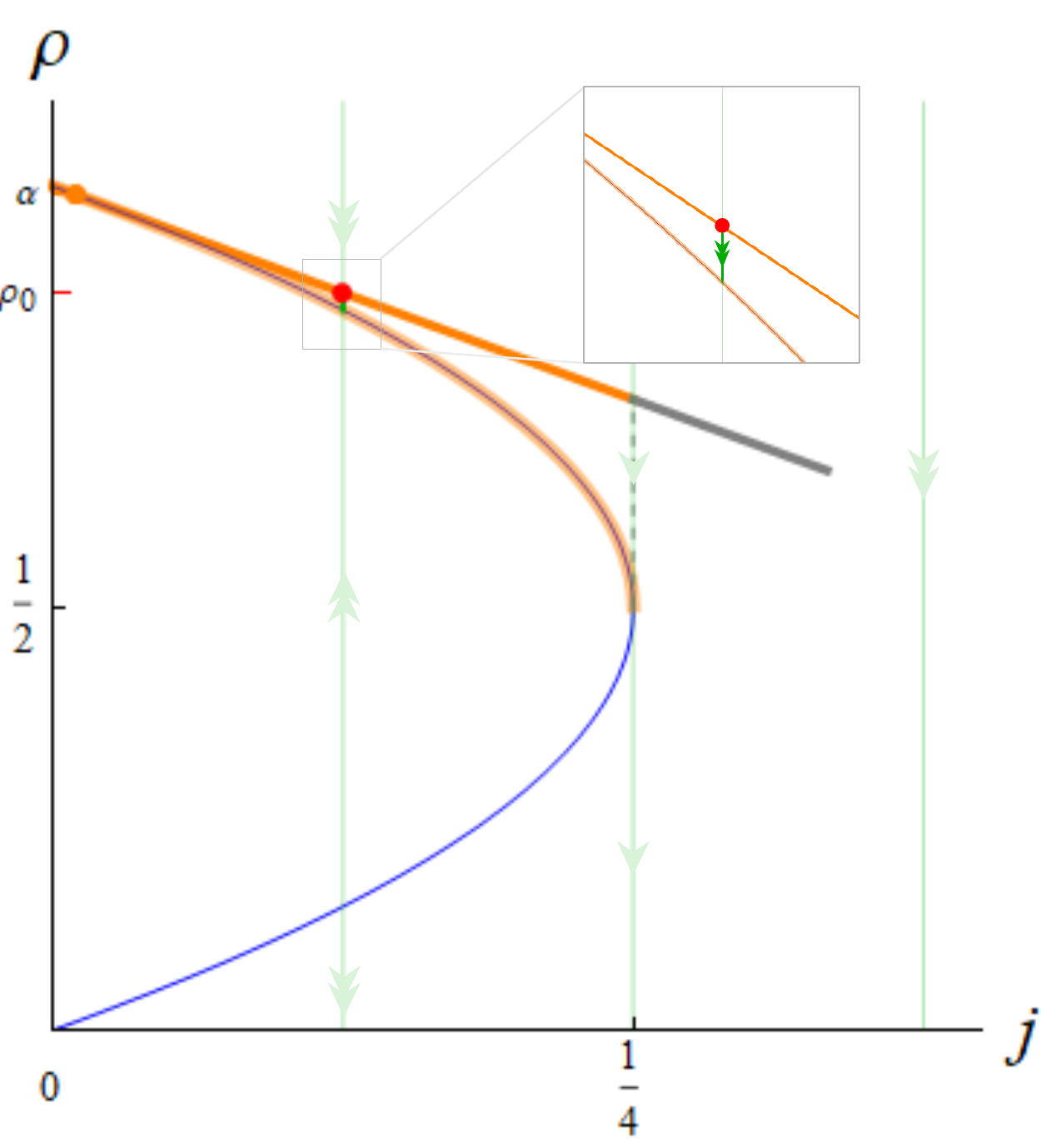\\
  (a) $\xi=0$
 \end{minipage}
 \hspace{.5cm}
 \begin{minipage}{.3\textwidth}
  \centering
  \def\svgwidth{1\textwidth}
  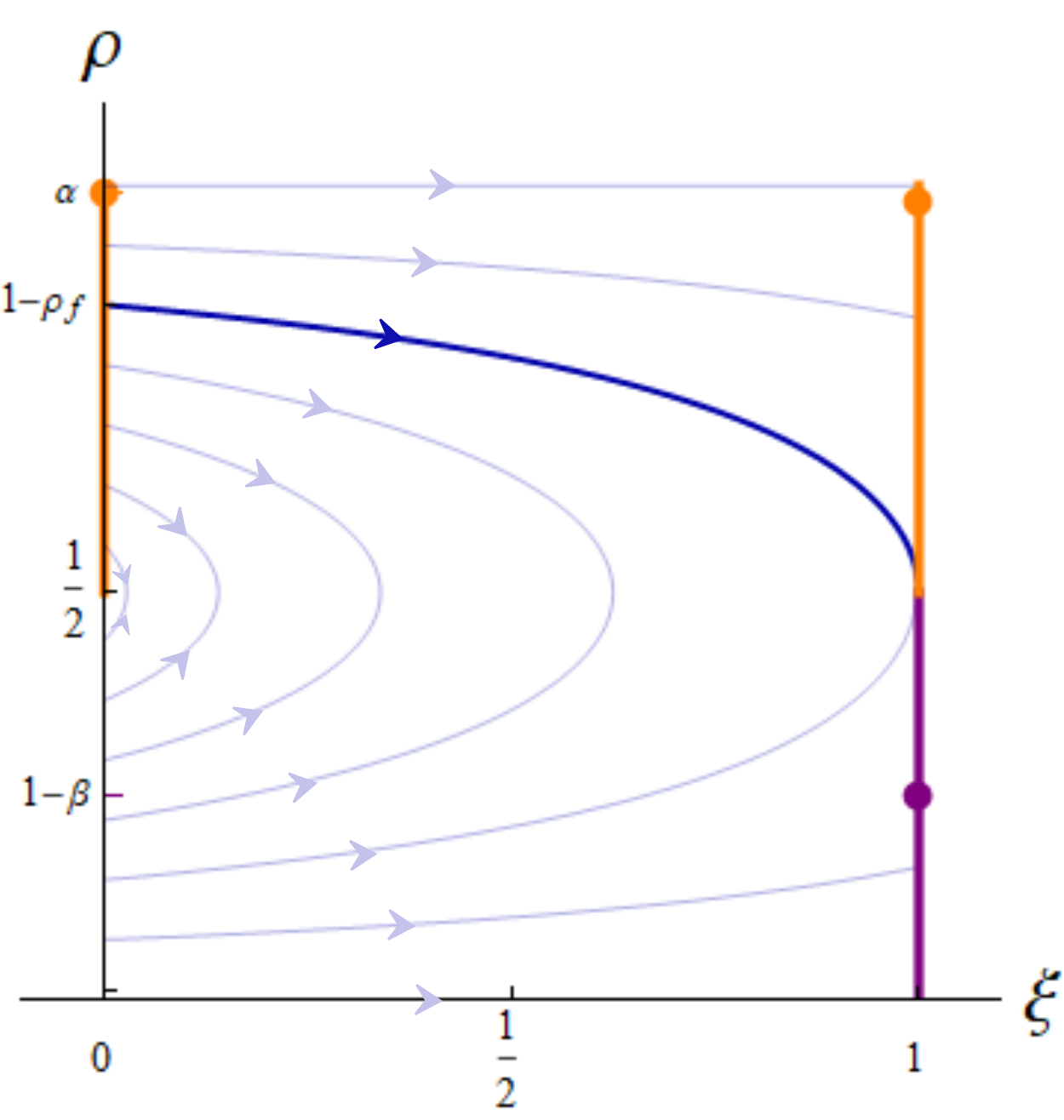\\
  (b) $\xi \in [0,1]$
 \end{minipage}
 \hspace{.5cm}
 \begin{minipage}{.3\textwidth}
  \centering
  \def\svgwidth{1\textwidth}
  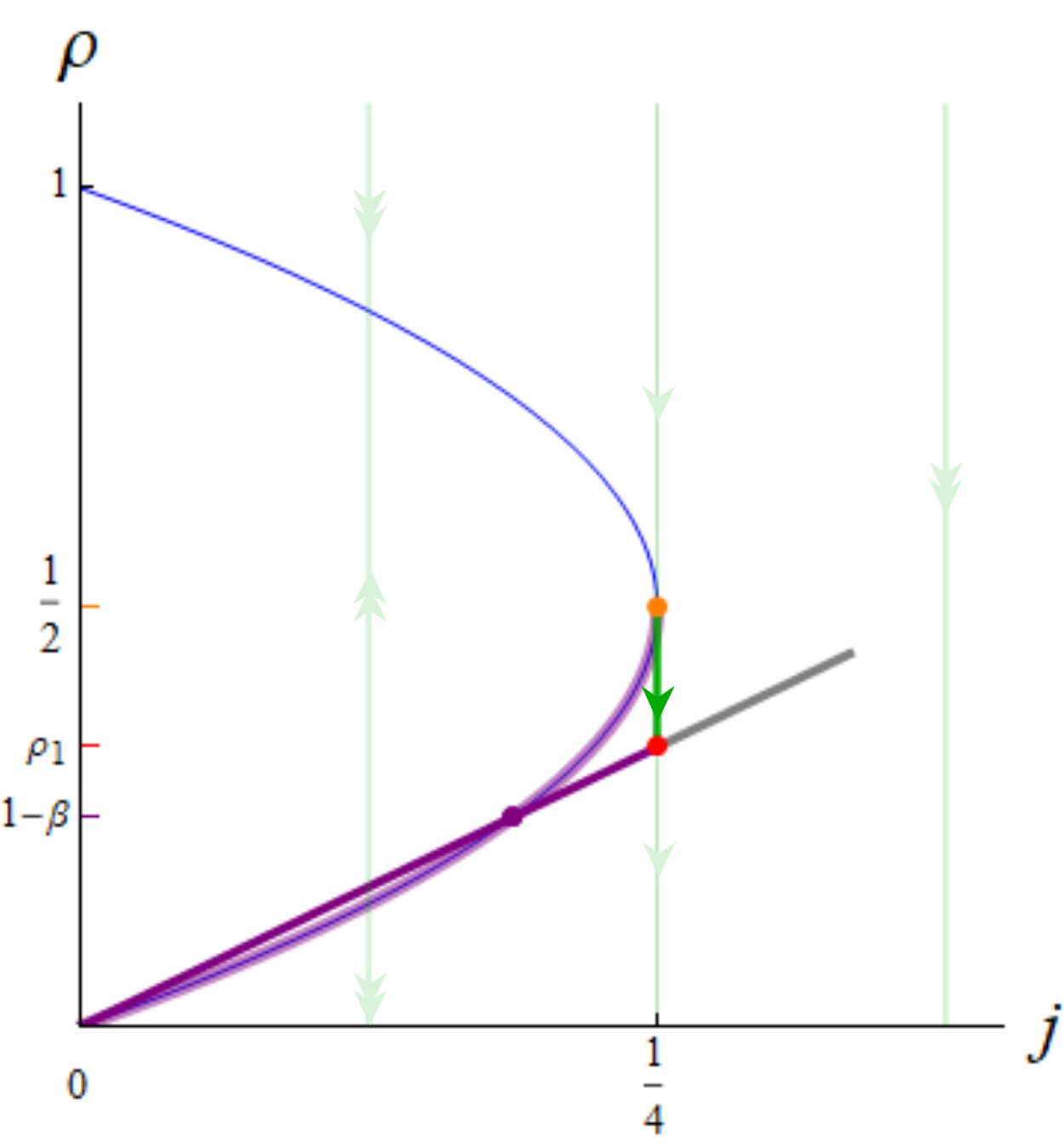\\
  (c) $\xi=1$
 \end{minipage}
  \begin{minipage}{.3\textwidth}
  \centering
  \def\svgwidth{1\textwidth}
  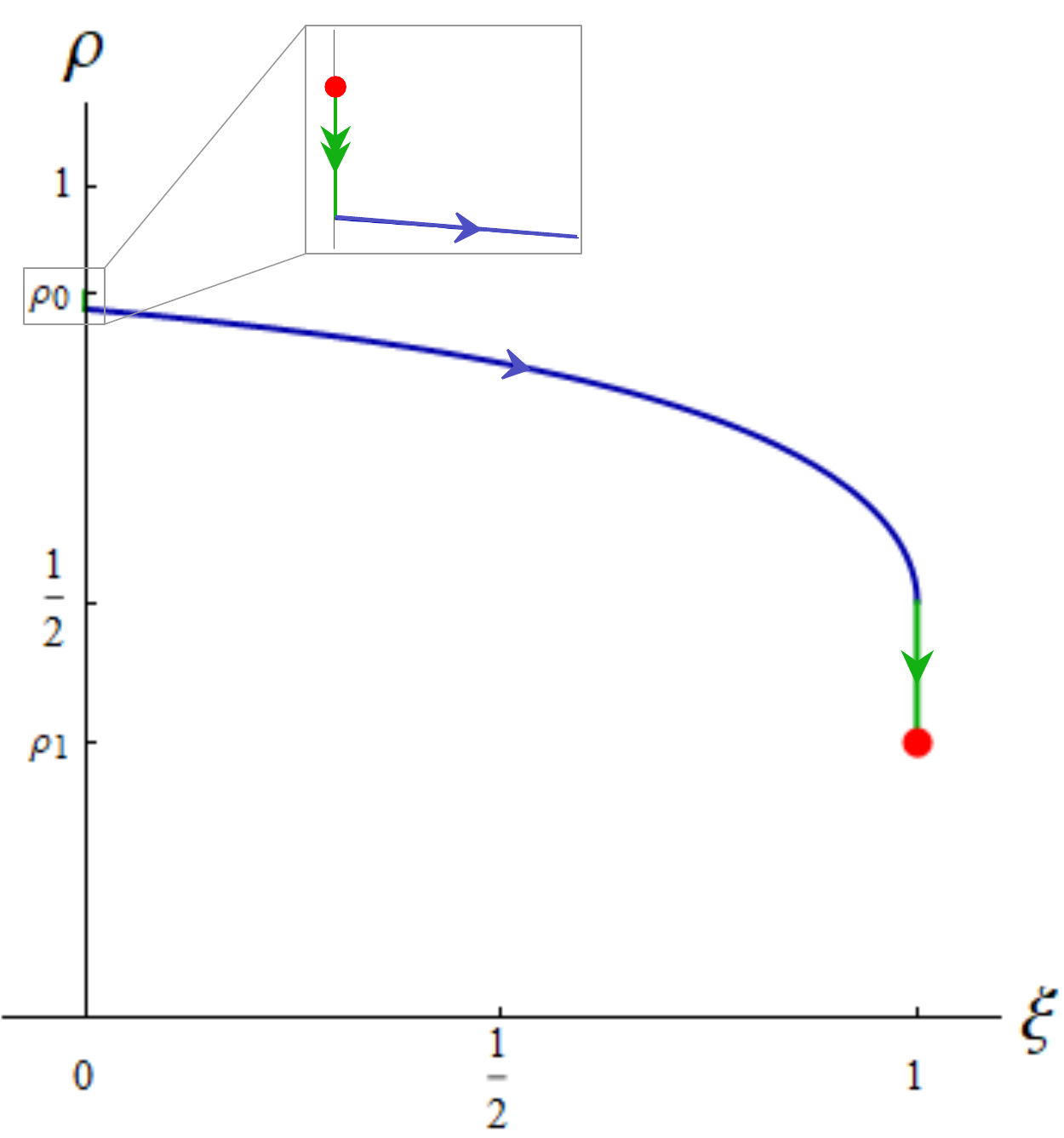\\
  (d) 
 \end{minipage}
 \caption{Schematic representation of a singular solution of type 4. (a) Boundary conditions at $\xi=0$ in $(j,\rho)$-space: the orange line is $\mathcal{L}$, while the orange curve is i.e.~$\mathcal{L}^+$. The red dot represents $p_0$, and the green line illustrates the layer at $\xi=0$ where $\rho$ decreases from $\rho_0=1-\frac{k(1)}{4\alpha k(0)}$ to $\rho(0,\rho_0)=1-\rho_f$ (since the range of $\rho$-values on which this transition occurs is very small, we illustrate a zoomed version in the square on the right). (b) Slow evolution on $\mathcal{C}_0$ from $(0,1-\rho_f)$ to $(1,\frac12)$ (blue curve). The orange lines are the projection of $\mathcal{L}$ (at $\xi=0$) and $\mathcal{L}^+$ (at $\xi=1$) on $\mathcal{C}_0$, while the purple one represents the projection of $\mathcal{R}^-$ on $\mathcal{C}_0$ at $\xi=1$. The orange dots correspond to $l$ (at $\xi=0$) and $l_1$ (at $\xi=1$), while the purple dot corresponds to $r$. (c) Here, we consider $\xi=1$ in $(j,\rho)$-space. The red dot corresponds to $p_1$, while the purple line and curve represent the manifolds $\mathcal{R}$ and $\mathcal{R}^-$, respectively. The green line corresponds to the layer of the singular orbit where $\rho$ decreases from $\frac12$ to $\rho_1=\frac{1}{4\beta}$. (d) Singular solution of type 4 in $(\xi,\rho)$-space.}
 \label{fig:singsol_4}
 \end{figure}

\begin{figure}[H] 
 \centering
 \begin{minipage}{.3\textwidth}
  \centering
  \def\svgwidth{1\textwidth}
  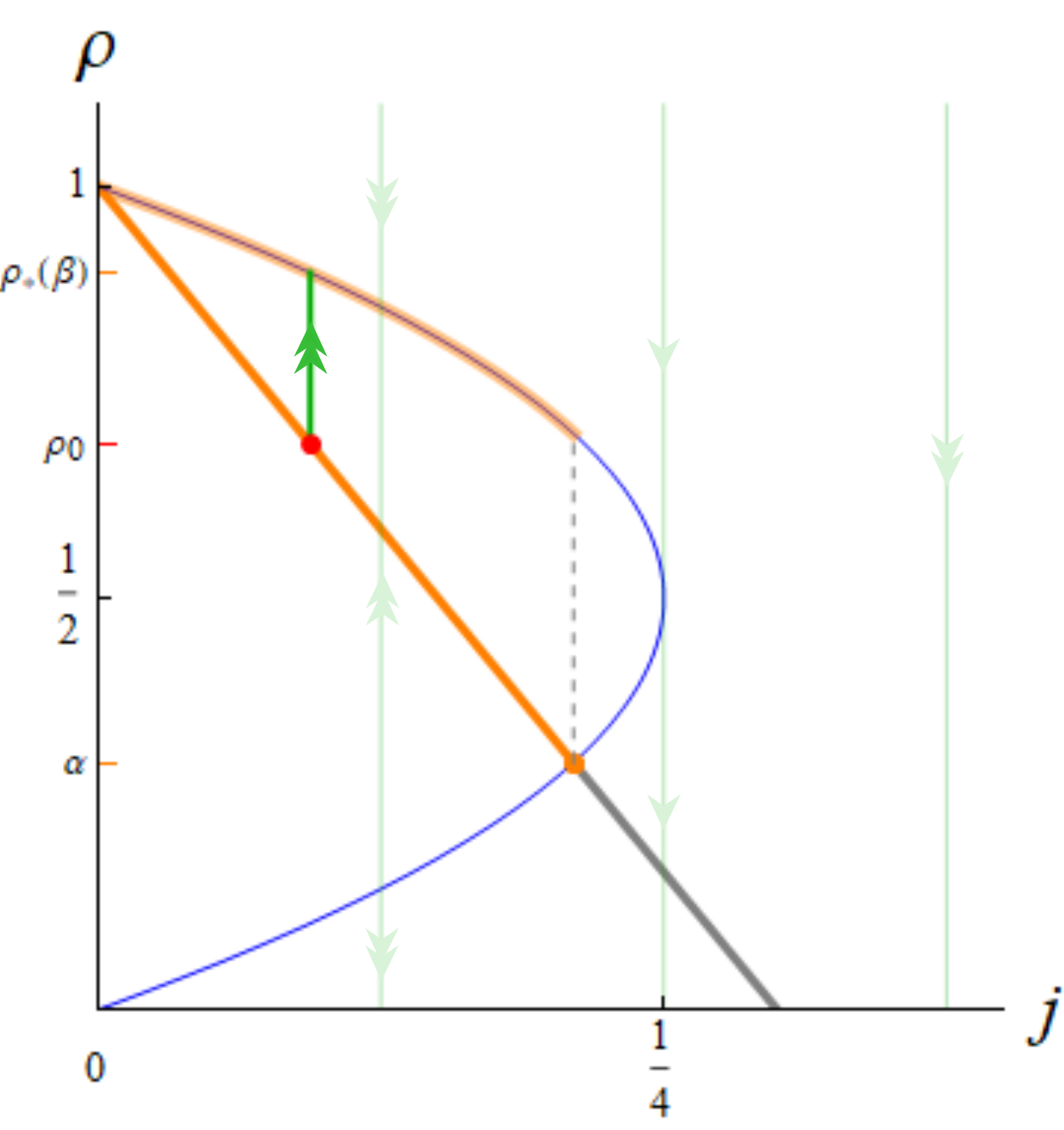\\
  (a) $\xi=0$
 \end{minipage}
 \hspace{.5cm}
 \begin{minipage}{.3\textwidth}
  \centering
  \def\svgwidth{1\textwidth}
  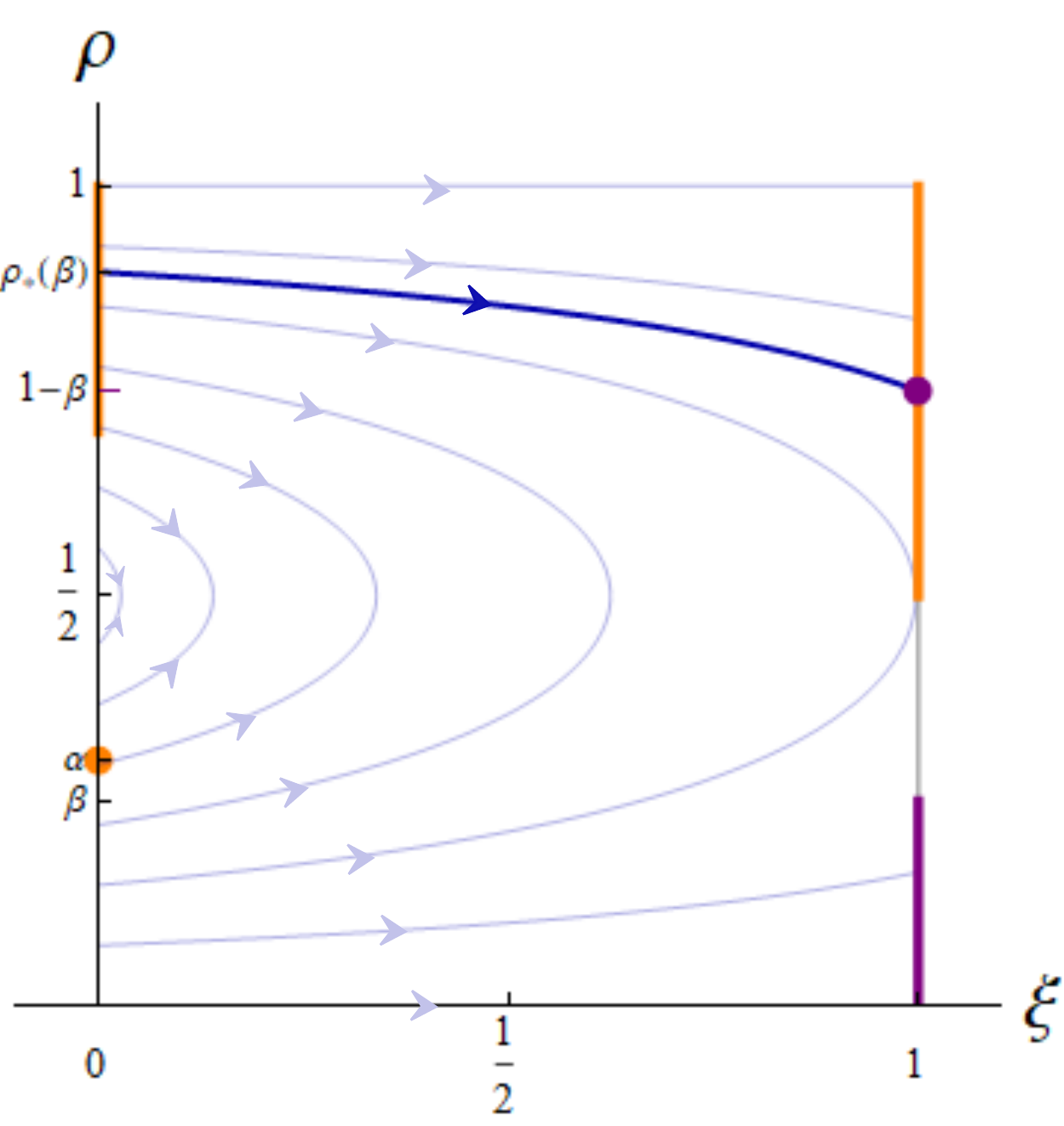\\
  (b) $\xi \in [0,1]$
 \end{minipage}
 \hspace{.5cm}
 \begin{minipage}{.3\textwidth}
  \centering
  \def\svgwidth{1\textwidth}
  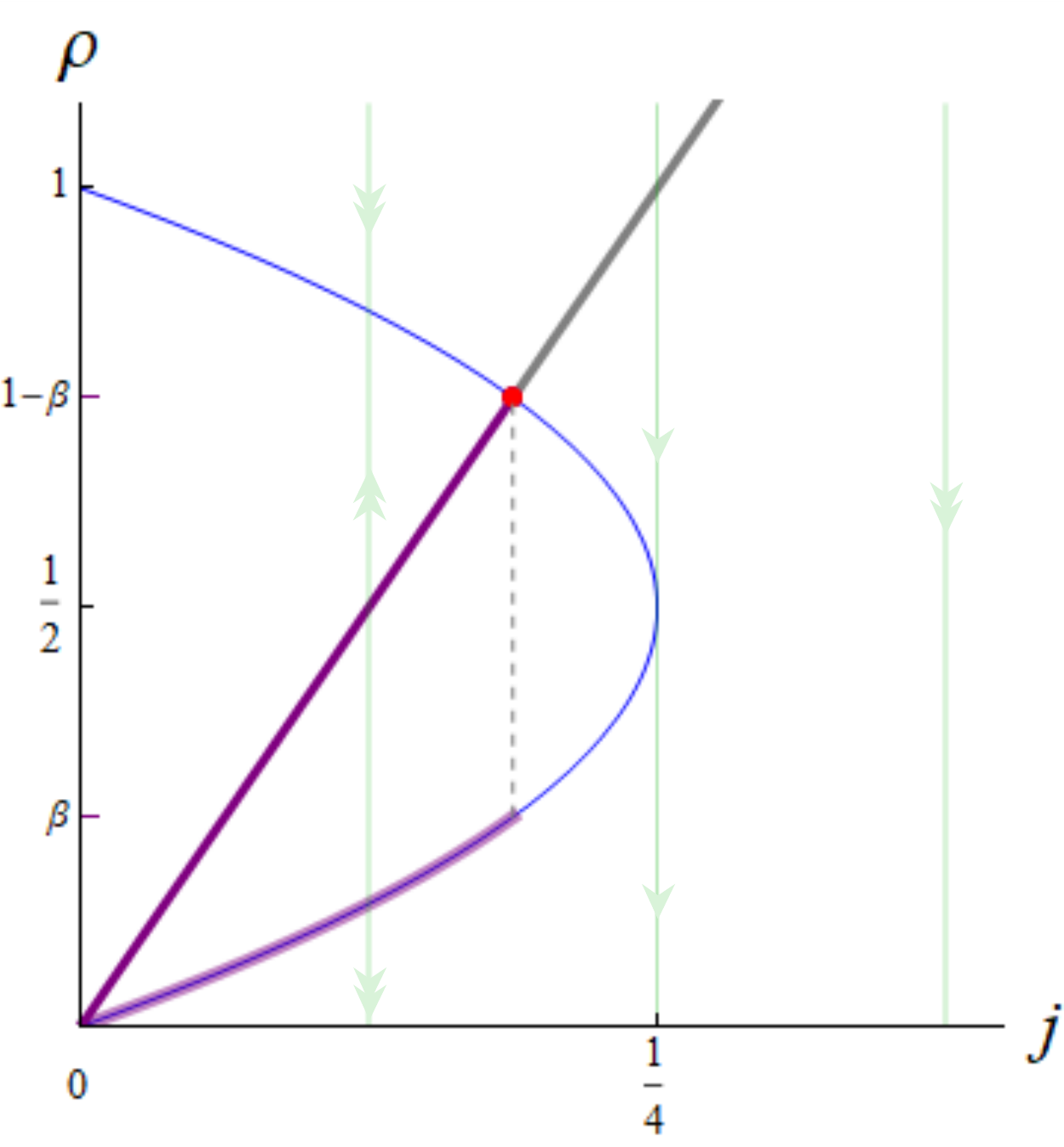\\
  (c) $\xi=1$
 \end{minipage}
  \begin{minipage}{.3\textwidth}
  \centering
  \def\svgwidth{1\textwidth}
  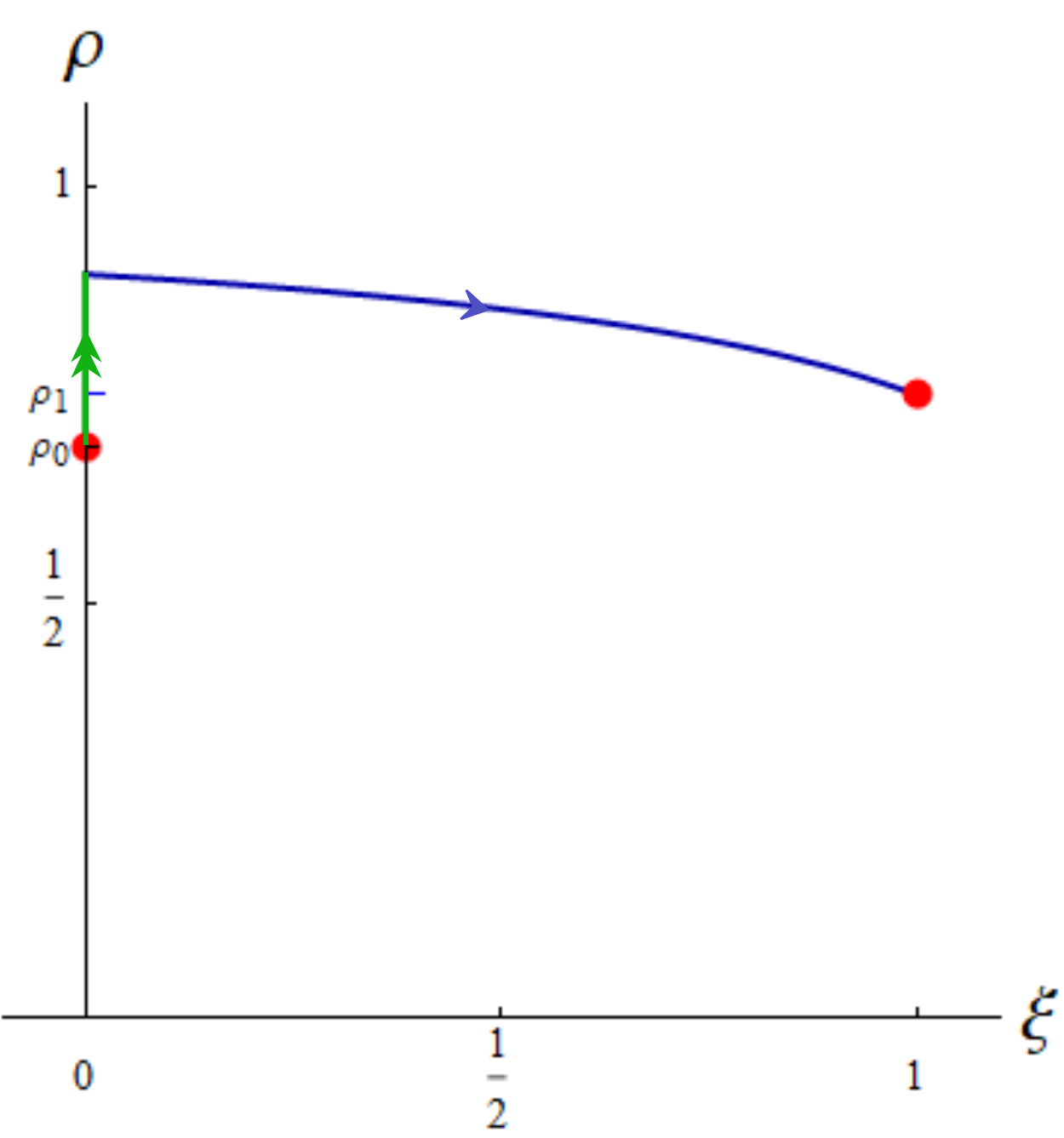\\
  (d) 
 \end{minipage}
 \caption{Schematic representation of a singular solution of type 5. (a) Boundary conditions at $\xi=0$ in $(j,\rho)$-space: the orange line is $\mathcal{L}$, while the orange curve is $\mathcal{L}^+$. The red dot represents $p_0$, and the green line illustrates the layer at $\xi=0$ where $\rho$ increases from $\rho_0=1-\frac{\beta(1-\beta)k(1)}{\alpha k(0)}$ to $\rho(0,\rho_0)=\rho_\ast(\beta)$ (since the range of $\rho$-values on which this transition occurs is very small, we illustrate a zoomed version in the square on the right). (b) Slow evolution on $\mathcal{C}_0$ from $(0,\rho_\ast(\beta))$ to $(1,1-\beta)$ (blue curve). The orange lines are the projection of $\mathcal{L}$ (at $\xi=0$) and $\mathcal{L}^+$ (at $\xi=1$) on $\mathcal{C}_0$, while the purple one represents the projection of $\mathcal{R}^-$ on $\mathcal{C}_0$ at $\xi=1$. The orange dots correspond to $l$ (at $\xi=0$) and $l_1$ (at $\xi=1$), while the purple dot corresponds to $r$. (c) Here, we consider $\xi=1$ in $(j,\rho)$-space. The red dot corresponds to $p_1$, while the purple line and curve represent the manifolds $\mathcal{R}$ and $\mathcal{R}^-$, respectively. (d) Singular solution of type 5 in $(\xi,\rho)$-space.}
 \label{fig:singsol_5}
 \end{figure}

 \begin{figure}[H] 
 \centering
 \begin{minipage}{.3\textwidth}
  \centering
  \def\svgwidth{1\textwidth}
  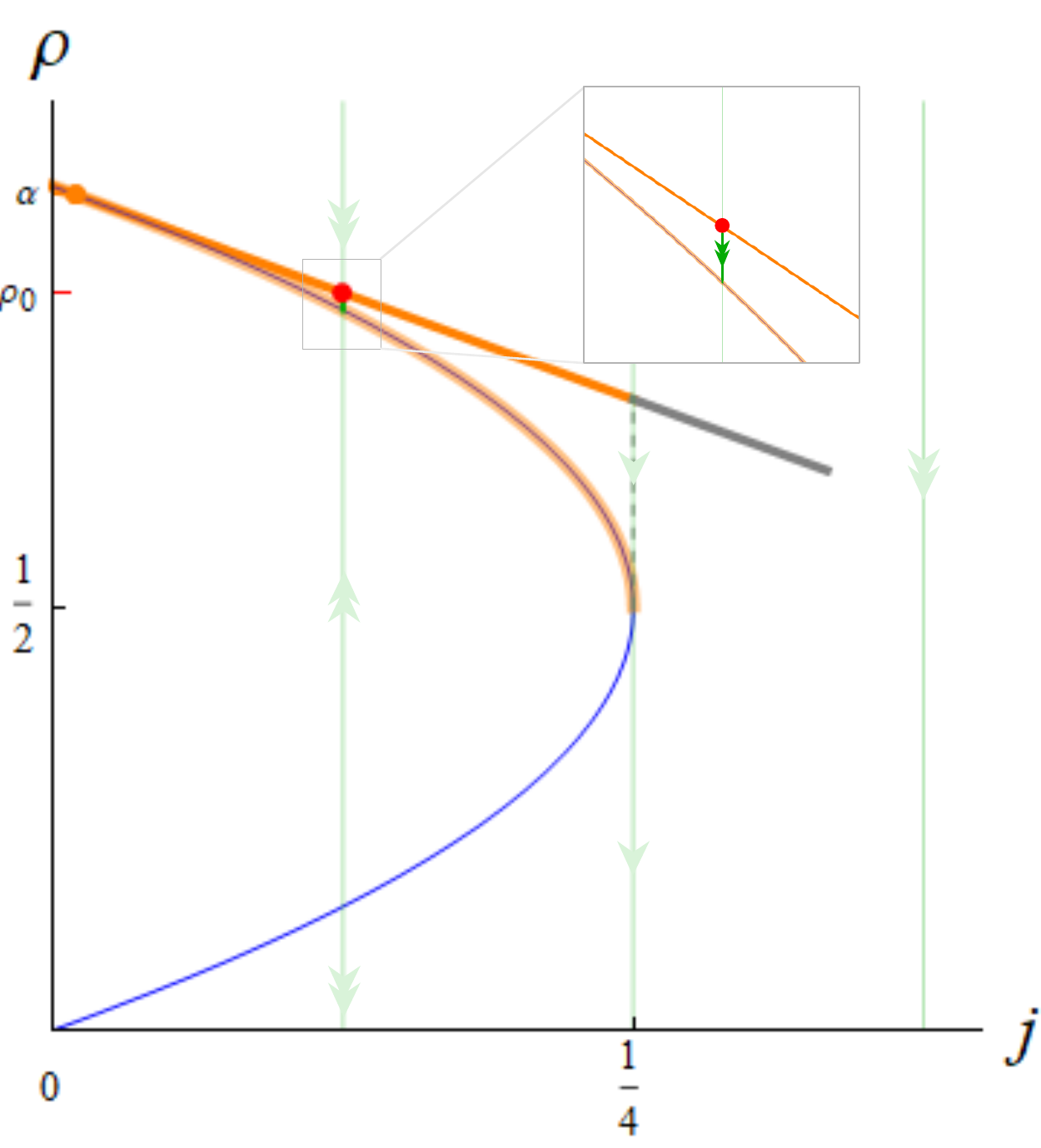\\
  (a) $\xi=0$
 \end{minipage}
 \hspace{.5cm}
 \begin{minipage}{.3\textwidth}
  \centering
  \def\svgwidth{1\textwidth}
  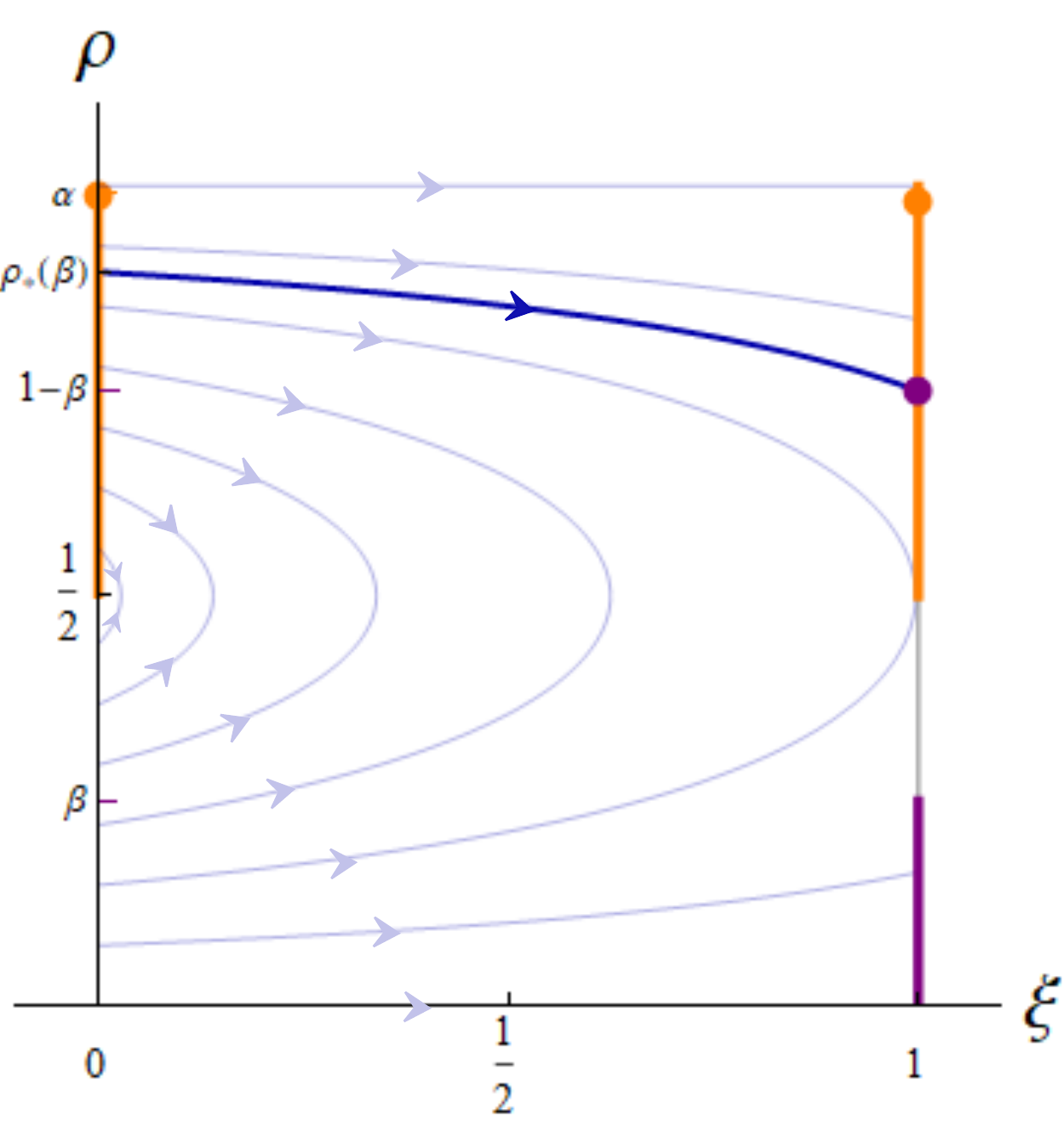\\
  (b) $\xi \in [0,1]$
 \end{minipage}
 \hspace{.5cm}
 \begin{minipage}{.3\textwidth}
  \centering
  \def\svgwidth{1\textwidth}
  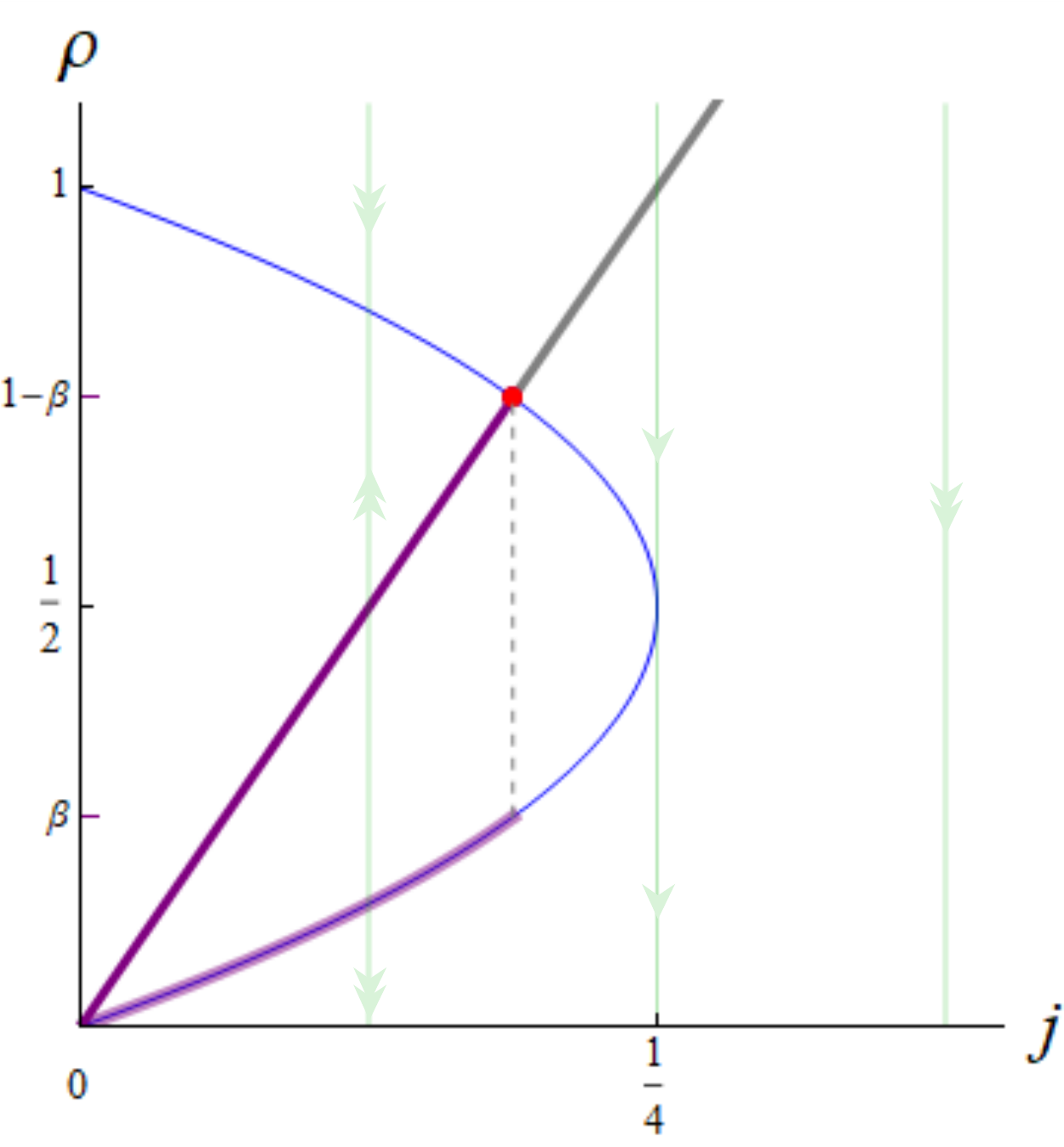\\
  (c) $\xi=1$
 \end{minipage}
  \begin{minipage}{.3\textwidth}
  \centering
  \def\svgwidth{1\textwidth}
  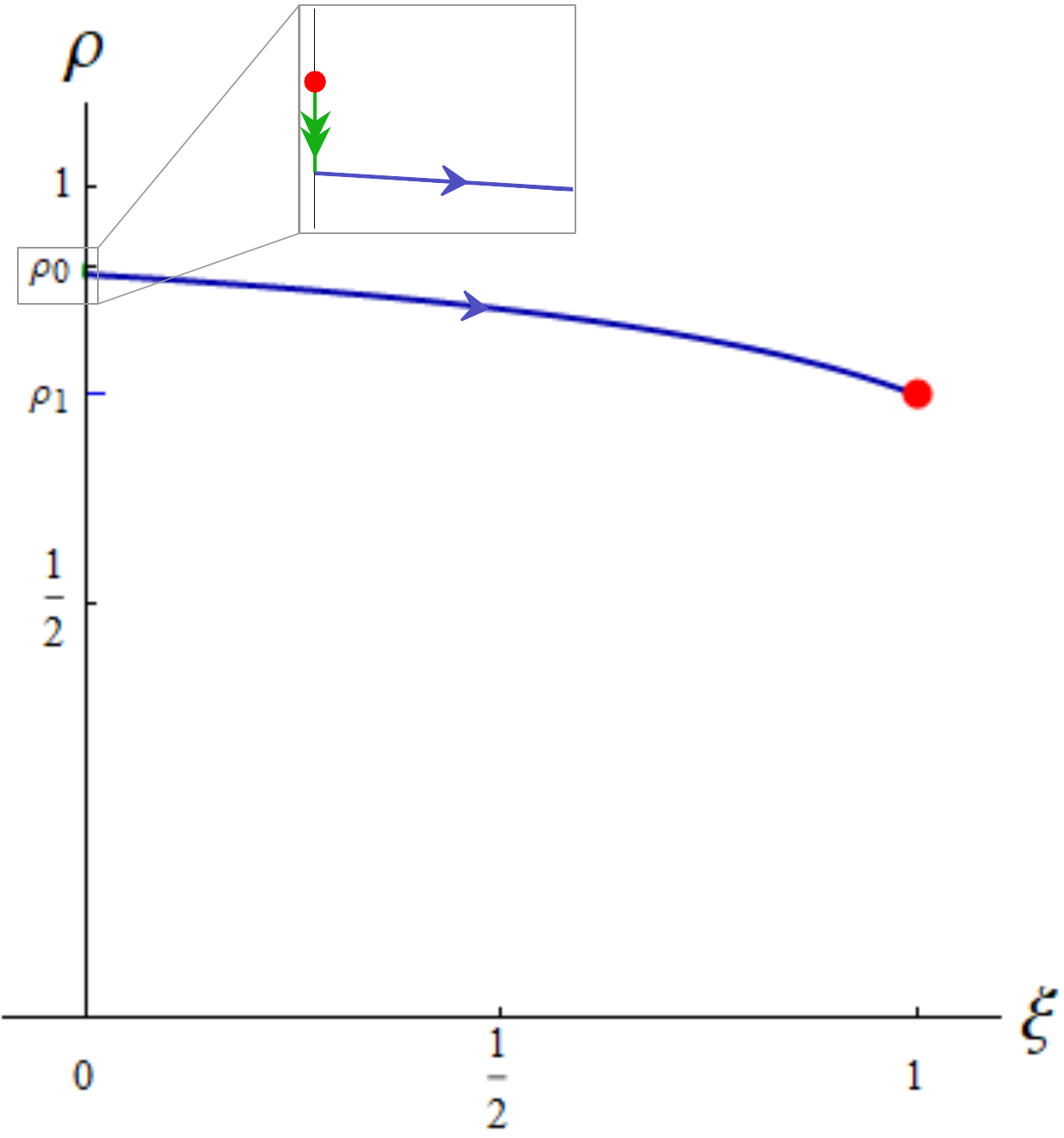\\
  (d) 
 \end{minipage}
 \caption{Schematic representation of a singular solution of type 6. (a) Boundary conditions at $\xi=0$ in $(j,\rho)$-space: the orange line is $\mathcal{L}$, while the orange curve is $\mathcal{L}^+$. The red dot represents $p_0$, and the green line illustrates the layer at $\xi=0$ where $\rho$ decreases from $\rho_0=1-\frac{\beta(1-\beta)k(1)}{\alpha k(0)}$ to $\rho(0,\rho_0)=\rho_\ast(\beta)$ (since the range of $\rho$-values on which this transition occurs is very small, we illustrate a zoomed version in the square on the right). (b) Slow evolution on $\mathcal{C}_0$ from $(0,\rho_\ast(\beta))$ to $(1,1-\beta)$ (blue curve). The orange lines are the projection of $\mathcal{L}$ (at $\xi=0$) and $\mathcal{L}^+$ (at $\xi=1$) on $\mathcal{C}_0$, while the purple one represents the projection of $\mathcal{R}^-$ on $\mathcal{C}_0$ at $\xi=1$. The orange dots correspond to $l$ (at $\xi=0$) and $l_1$ (at $\xi=1$), while the purple dot corresponds to $r$. (c) Here, we consider $\xi=1$ in $(j,\rho)$-space. The red dot corresponds to $p_1$, while the purple line and curve represent the manifolds $\mathcal{R}$ and $\mathcal{R}^-$, respectively. (d) Singular solution of type 6 in $(\xi,\rho)$-space.}
 \label{fig:singsol_6}
 \end{figure}

\section{Explicit profiles for straight corridors} \label{sec:str_chann}
In the following we discuss the different stationary profiles for straight corridors. The profiles depend on the inflow and outflow rate, and have a form similar
to the well-known travelling wave profiles of the viscous Burgers' equation on the real line.\\

In case of a straight corridor $k \equiv 1$ we have
\begin{equation}
    \label{eq:rho constant width}
    -\varepsilon \partial_x \rho + \rho(1-\rho) = \rJ\,,
\end{equation}
for some $\rJ \in \mathbb{R}$. If $\alpha \neq \beta$, then solutions are of the form
\begin{equation}
    \rho = \sfrac{1}{2} + \sqrt{|\rJ - \sfrac{1}{4}|} \; \T\left(\varepsilon^{-1} \sqrt{|\rJ - \sfrac{1}{4}|} (x - \xzero)\right)\,,
    \label{eq:explicit rho}
\end{equation}
where $\rJ$ and $\xzero \in \mathbb{R}$ are determined by $\alpha$ and $\beta$. Note that $\xzero$ is the value of $x$ for which $\rho$ takes the value $\frac{1}{2}$, and that is does not necessarily lies inside the domain. The profile shape is given by
\begin{equation*}
    \T =
    \begin{cases}
        -\tan & \text{ if } \rJ > \sfrac{1}{4}\,,
        \\
        \tanh & \text{ if } \rJ < \sfrac{1}{4} \text{ and } \alpha + \beta < 1\,,
        \\
        \tanh^{-1} & \text{ if } \rJ < \sfrac{1}{4} \text{ and } \alpha + \beta > 1\,.
    \end{cases}
\end{equation*}

The critical case $\alpha + \beta = 1$ corresponds to constant densities, and we get $\rho \equiv \alpha = 1 - \beta$, $\rJ = \alpha(1-\alpha) = \beta(1-\beta)$.
In the particular case $\rJ = \sfrac{1}{4}$, the solutions are of the form
\begin{equation}
    \rho = \frac{1}{2} + \frac{\varepsilon}{x - \xzero}\,,
    \label{eq:explicit rho J one quarter}
\end{equation}
which characterises the interface between the regions $\rJ < \frac{1}{4}$ and $\rJ > \frac{1}{4}$.
The solutions allow us to compute explicit profiles for all combinations of $\alpha$ and $\beta$ (extending the results in \cite{burger_flow_2016}). The respective bifurcation diagram is shown in Figure~\ref{fig:bd_sing_alpha_beta_k1k2}.
\begin{figure}[!ht]
 \centering
\includegraphics[width=0.485\textwidth]{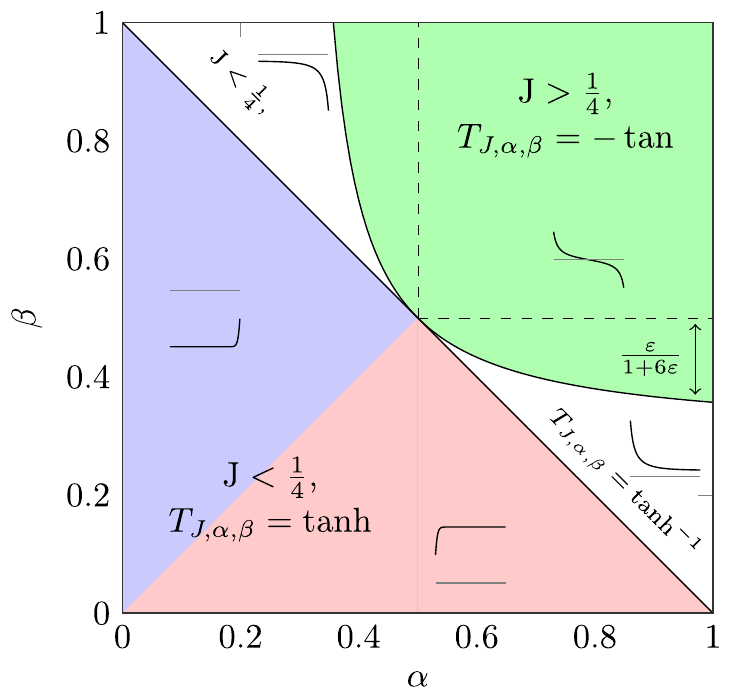}
 \caption{Phase diagram illustrating the stationary profiles for different inflow and outflow parameters $\alpha$ and $\beta$, along with the respective expressions for $\T$, in the case $k(x) \equiv k_0$, $\varepsilon/L=1$.}
 \label{fig:bd_sing_alpha_beta_k1k2}
\end{figure}
The constant $\xzero$ can be computed from the boundary conditions
\begin{align}
    \alpha\left(\sfrac{1}{2} + \sqrt{\sfrac{1}{4} -\rJ}\; \T \left(\varepsilon^{-1}\xzero \sqrt{\sfrac{1}{4} - \rJ}\right)\right) - \rJ &= 0\,,
    \label{eq:explicit boundary condition left}
    \\
    \beta\left(\sfrac{1}{2} + \sqrt{\sfrac{1}{4} -\rJ} \; \T \left( \left(\varepsilon^{-1} (L-\xzero)\right) \sqrt{\sfrac{1}{4} - \rJ}\right)\right) -\rJ &= 0\,.
    \label{eq:explicit boundary condition right}
\end{align}
We assume that $\rJ \neq \sfrac{1}{4}$ which is generic.
In the case $\alpha = \beta$ we get $\xzero = \sfrac{L}{2}$ and $\rJ$ is the fixed point of a monotone convex function, and in the limit $\varepsilon \rightarrow 0$, $\rJ = \alpha(1-\alpha)$.
In general, one can solve for $\xzero$ in \eqref{eq:explicit boundary condition left} and obtains
\begin{equation*}
    \xzero = \frac{\varepsilon}{\sqrt{\sfrac{1}{4}- \rJ}} \; \invT\left(\left(\frac{\rJ}{\alpha} - \frac{1}{2}\right) \frac{1}{\sqrt{\sfrac{1}{4} - \rJ}}\right)\,.
\end{equation*}
Alternatively, one can solve for $\xzero$ in both \eqref{eq:explicit boundary condition left} and \eqref{eq:explicit boundary condition right} to get
\begin{equation}
    \label{eq:x one half averaged}
    \xzero
    =
    \frac{L}{2} + \frac{\varepsilon}{2 \sqrt{\sfrac{1}{4} - \rJ}} \left(
        \invT \left(\frac{\frac{\rJ}{\alpha} - \frac{1}{2}}{\sqrt{\sfrac{1}{4} - \rJ}}\right)
        - \invT\left(\frac{\frac{\rJ}{\beta} - \frac{1}{2}}{\sqrt{\sfrac{1}{4} - \rJ}}\right)
\right)\,.
\end{equation}
From this expression one can see that the point $\xzero$ is one the left or the right of $x = \frac{1}{2}$ in the case $J < 0.25$, if either $\alpha$ or $\beta$ is larger in the case $\alpha+
\beta > 1$.  Note that the position of $\xzero$ changes rapidly --- a characteristic which has been observed for the viscous Burgers' equations on the real line as well. Furthermore the maximum admissible flux in a straight corridor is given by
$\T = -\tan$ and taking the maximum value of $\rJ$ for which $\rho$ is continuous. This yields
\begin{equation} \label{eq:Jmax}
    \rJ_{\mathrm{max}} = \frac{1}{4} + \left(\frac{\pi \varepsilon}{L}\right)^2\,.
\end{equation}
If we take the boundary conditions \eqref{eq:boundary conditions reduced rho} into account and remembering that $\rJ_\mathrm{max} := \sup_{\alpha,\beta} \rJ$ is achieved for $\alpha = \beta = 1$ (see Lemma~\ref{lem:monotonicity j}), we get:
\begin{equation*}
    \rJ_{\mathrm{max}} = \frac{1}{4} + \left(\frac{\pi \varepsilon}{L}\right)^2 \left(1 - 8\;\frac{\varepsilon}{L} + 64\left(\frac{\varepsilon}{L}\right)^2 - \frac{32\left(48-\pi^2\right)}{3}\left(\frac{\varepsilon}{L}\right)^3\right) + o\left(\frac{\varepsilon}{L}\right)^5\,.
\end{equation*}

\end{document}